\newtheorem{Thm}{Theorem}
\newtheorem{re}{Remark}[section]
\newtheorem{lem}{Lemma}[section]
\newtheorem{theo}[Thm]{Theorem}
 \numberwithin{equation}{section}
 \numberwithin{Thm}{section}
\newcommand{\be}{\begin{equation}}
\newcommand{\ee}{\end{equation}}
\newcommand\bes{\begin{eqnarray}}
\newcommand\ees{\end{eqnarray}}
\newcommand{\bess}{\begin{eqnarray*}}
\newcommand{\eess}{\end{eqnarray*}}
\newcommand\bR{{\mathbb{R}}}
\begin{document}

\title[Principal eigenvalue of an elliptic operator with Large Degenerate Advection]
{Effects of Large Degenerate Advection and Boundary Conditions on the Principal
Eigenvalue and its Eigenfunction of A Linear Second Order
Elliptic Operator}
\author{Rui Peng and\ Maolin Zhou}

\thanks{{\bf R. Peng}: School of Mathematics and Statistics, Jiangsu Normal University,
Xuzhou, 221116, Jiangsu Province, China. Email: {\tt
pengrui\,$\b{}$\,seu@163.com}}

\thanks{{\bf M. Zhou}: Department of Mathematics, School of Science and Technology,
University of New England, Armidale, NSW 2341, Australia. Email: {\tt
zhouutokyo@gmail.com}}

\thanks{R. Peng was partially supported by NSF of China (11271167,
11571200), Natural Science Fund for Distinguished Young Scholars of Jiangsu Province (BK20130002),
and the Priority Academic Program Development of Jiangsu Higher Education Institutions.}
\maketitle

\noindent
{\it Abstract: }
In this article, we study, as the coefficient $s\to\infty$, the asymptotic behavior
of the principal eigenvalue of the eigenvalue problem
 \bes
 -\varphi''(x)-2sm'(x)\varphi'(x)+c(x)\varphi(x)=\lambda(s)\varphi(x),\ \ 0<x<1,
 \nonumber
 \ees
complemented by a general boundary condition.
This problem is relevant to nonlinear propagation phenomena in reaction-diffusion equations.
The main point is that the advection (or drift) term $m$ allows natural degeneracy. For instance,
$m$ can be constant on $[a,b]\subset[0,1]$. Depending on the behavior of $m$
near the neighbourhood of the endpoints $a$ and $b$, the limiting value could be the principal eigenvalue of
\bes
 -\varphi''(x)+c(x)\varphi(x)=\lambda\varphi(x),\ \ a<x<b,
 \nonumber
 \ees
coupled with Dirichlet or Newmann boundary condition at $a$ and $b$. A complete understanding of the limiting behavior of
the principal eigenvalue and its eigenfunction is obtained, and new fundamental effects of large degenerate
advection and boundary conditions on the principal eigenvalue and the principal eigenfunction are revealed.
In one space dimension, the results in the existing literature are substantially improved.

\smallskip

\noindent
{\it Key words and phrases: }
Principal eigenvalue; Principal eigenfunction; Elliptic operator; Large degenerate advection; Boundary Condition; Asymptotic behavior.

\smallskip

\noindent
{\it Mathematics Subject Classification: } 35P15, 35J20, 35J55.

\setlength{\baselineskip}{16pt}{\setlength\arraycolsep{2pt}

\section{Introduction} \setcounter{equation}{0}

In this article, we are concerned with the linear second order elliptic
eigenvalue problem with a general boundary condition in one space dimension:
 \bes
 \left\{\begin{array}{ll}
 \medskip
 \displaystyle
 -\varphi''(x)-2sm'(x)\varphi'(x)+c(x)\varphi(x)=\lambda\varphi(x),\ &0<x<1,\\
 \displaystyle
 -\hbar_1\varphi'(0)+\ell_1\varphi(0)=\hbar_2\varphi'(1)+\ell_2\varphi(1)=0,
 \end{array}
 \right.
 \label{p}
 \ees
where $m\in C^2([0,1]),\,c\in C([0,1])$, and $s\in\bR$ is a parameter
appearing in front of the advection (or drift) term $m$, and the nonnegative constants
$\hbar_i,\,\ell_i\,(i=1,2)$ satisfy $\hbar_i+\ell_i>0\,(i=1,2)$.

It is well known that, given $m,\,c$ and $s$, problem \eqref{p} admits a principal eigenvalue
$\lambda=\lambda_1(s)\in\bR$, which is unique in the sense that only
such an eigenvalue corresponds to a positive eigenfunction $\varphi$
($\varphi$ is also unique up to multiplication). Such a function
$\varphi$ is usually called a principal eigenfunction.

As pointed out by Berestycki, Hamel and Nadirashvili in their remarkable work \cite{BHN},
the qualitative behavior of the eigenvalue problem \eqref{p} usually plays a significant role
in the study of nonlinear propagation phenomena of reaction-diffusion equations.

Therefore, the main goal of this paper is to determine, for a general advection function $m$, as $s\to\infty$,
the limiting behaviors of the principal eigenvalue and its eigenfunction to \eqref{p}.
Problem  \eqref{p} with different types
of degeneracies will be treated in a uniform manner. The obtained results will clearly demonstrate that how the interplay between the degeneracy of
the advection function and the boundary conditions affects the qualitative properties
of the principal eigenvalue and its principal eigenfunction in a substantial way.
As far as we know, the current work seems to be the first to reveal such interesting and fundamental influences.
Besides, we believe that our results will have natural applications to reaction-diffusion equations.

In the rest of the introduction, we first briefly recall the existing works on \eqref{p} in the literature and then state our main findings.

\subsection{Existing studies} Consider the eigenvalue problem with Neumann boundary condition (i.e., $\ell_1=\ell_2=0$ in \eqref{p}):
 \bes
 -\varphi''(x)-2sm'(x)\varphi'(x)+c(x)\varphi(x)=\lambda\varphi(x),\ 0<x<1;\ \ \
 \varphi'(0)=\varphi'(1)=0,
 \label{1.1}
 \ees
and denote $\lambda_1^\mathcal{N}(s)$ and $\varphi$ to be its principal eigenvalue and eigenfunction, respectively.

It is known that $\lambda_1^\mathcal{N}(s)$ enjoys the following variational characterization:
 \bes
 \lambda_1^\mathcal{N}(s)=\min_{\int_0^1 e^{2sm}\varphi^2dx=1}\int_0^1e^{2sm}[(\varphi')^2+c\varphi^2]dx.
 \label{1.2}
 \ees
Using the substitution $w=e^{sm}\varphi$ in \eqref{1.1}, one easily sees that $(\lambda_1^\mathcal{N}(s),w)$ satisfies
 \bes
 \left\{\begin{array}{ll}
 \medskip
 \displaystyle
 -w''(x)+[s^2(m'(x))^2+sm''(x)+c(x)]w(x)=\lambda_1^\mathcal{N}(s)w(x),\ &0<x<1,\\
 \displaystyle
 w'(0)-sw(0)m'(0)=w'(1)-sw(1)m'(1)=0.
 \end{array}
 \right.
 \label{1.3}
 \ees
In light of \eqref{1.2}, the principal eigenvalue $\lambda_1^\mathcal{N}(s)$ can be equivalently characterized by
 \bes
 \lambda_1^\mathcal{N}(s)=\min_{\int_0^1 w^2dx=1}\int_0^1[(w'-swm')^2+cw^2]dx.
 \label{1.4}
 \ees
For later purpose, denote by $w(s,\cdot)$ the positive solution of \eqref{1.3} corresponding to $\lambda_1^\mathcal{N}(s)$, and
normalize it by $\int_0^1w^2(s,x)dx=1$ for each $s>0$. In addition, it is immediately observed that
 $$
 \min_{x\in[0,1]}c(x)\leq\lambda_1^\mathcal{N}(s)\leq\max_{x\in[0,1]}c(x),\ \ \ \forall s\in\bR.
 $$

In \cite{CL1}, Chen and Lou investigated the asymptotic behavior of $\lambda_1^\mathcal{N}(s)$
as $s\to\infty$. To present one of their main results, we need recall some definitions introduced there.
In the one-dimensional setting, Chen and Lou in \cite{CL1} said that

 \begin{itemize}

 \item[$\bullet$] An {\it interior critical point} of the function $m$ is a point $x\in(0,1)$ satisfying $m'(x)=0$,
and an interior critical point $x$ is called {\it non-degenerate} if $m''(x)\not=0$.

 \item[$\bullet$] The boundary points $0$ and $1$ are always called critical, and a boundary critical point $x\in\{0,1\}$
 is called {\it non-degenerate} if either $m'(x)\not=0$ or $m''(x)\not=0$.

 \item[$\bullet$] A {\it point of local maximum} of $m$ is a point $x\in[0,1]$ that satisfies $m(x)\geq m(y)$
 for every $y$ in a small neighborhood of $x$, and there exists some sequence $\{r_j\}_{j=1}^\infty$ of positive numbers such that
  $$
  m(x)>\max_{[0,1]\cap\{x-r_j,\,x+r_j\}}m,\ \ \forall j\geq1,\ \ \lim\limits_{j\to\infty}r_j=0.
  $$

\end{itemize}

As remarked by \cite{CL1}, the reason the authors used such a definition of local maximum is that they had to avoid the occurrence of the situation that
the set of local maximum of $m$ contains some flat piece. Then one main result---Theorem 1 of \cite{CL1} concludes that

 \begin{theo}\label{th1.1} Assume that all critical points of $m$ are non-degenerate. Let $\mathcal{M}$ be the set of points of local maximum of $m$. Then
$\lim\limits_{s\to\infty}\lambda_1^\mathcal{N}(s)=\min_{x\in\mathcal{M}} c(x)$.
\end{theo}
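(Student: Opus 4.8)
\medskip
\noindent\emph{Proof proposal.}
The plan is to establish the two one-sided bounds
$\limsup_{s\to\infty}\lambda_1^{\mathcal N}(s)\le\min_{x\in\mathcal M}c(x)$ and
$\liminf_{s\to\infty}\lambda_1^{\mathcal N}(s)\ge\min_{x\in\mathcal M}c(x)$ separately, using the three characterizations \eqref{1.2}, \eqref{1.3}, \eqref{1.4}. Two structural consequences of the non-degeneracy hypothesis will be used throughout: every critical point of $m$ is isolated, hence there are only finitely many and $\mathcal M$ is a finite set; and every interior critical point is either a strict local maximum (with $m''<0$) or a strict local minimum (with $m''>0$), so there are no flat or inflection-type critical points. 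In particular the notion of point of local maximum reduces here to the ordinary one.

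\medskip
\noindent\emph{Upper bound.}
Fix $x_0\in\mathcal M$ realizing $\min_{\mathcal M}c$, and $\delta>0$ so small that $m(x)<m(x_0)$ for $0<|x-x_0|\le2\delta$. I would use in \eqref{1.4} the test function $w_s=\chi\,e^{sm}$, a truncation of the formal minimizer $e^{sm}$ of the quadratic form, where $\chi$ is a fixed smooth cut-off with $\chi\equiv1$ on $[x_0-\delta,x_0+\delta]$ and $\mathrm{supp}\,\chi\subset[x_0-2\delta,x_0+2\delta]\cap[0,1]$. Since $w_s'-sw_sm'=\chi'e^{sm}$ is supported where $m\le m(x_0)-\eta$ for some $\eta>0$, the term $\int_0^1(w_s'-sw_sm')^2dx$ is exponentially small compared with $\int_0^1w_s^2dx$; meanwhile the probability measure $w_s^2dx/\int_0^1w_s^2dx$ concentrates at $x_0$ because $x_0$ is a strict local maximum of $m$, so $\int_0^1 cw_s^2dx/\int_0^1w_s^2dx\to c(x_0)$ by continuity of $c$. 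Substituting $w_s$ into \eqref{1.4} yields $\lambda_1^{\mathcal N}(s)\le c(x_0)+o(1)$.

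\medskip
\noindent\emph{Lower bound.}
Here I would work with the normalized positive solution $w(s,\cdot)$ of \eqref{1.3} and the potential $V_s=s^2(m')^2+sm''+c$, and show that the $L^2$-mass of $w(s,\cdot)$ concentrates on every neighborhood of $\mathcal M$. Given a small neighborhood $U\supset\mathcal M$, the complement $[0,1]\setminus U$ splits into finitely many pieces of three kinds: pieces bounded away from $\{m'=0\}$, where $(m')^2\ge\kappa>0$ and hence $V_s\ge\kappa s^2-Cs$; small neighborhoods of interior local minima, where $m''\ge\kappa>0$ and hence $V_s\ge\kappa s-C$; and small neighborhoods of the endpoints that do not belong to $\mathcal M$, where again $V_s\ge\kappa s-C$. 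Since $\min c\le\lambda_1^{\mathcal N}(s)\le\max c$, we get $V_s-\lambda_1^{\mathcal N}(s)\ge\kappa_U s$ on all of $[0,1]\setminus U$ for $s$ large. A standard Agmon-type weighted estimate for \eqref{1.3} (multiplying the equation by $e^{2\phi}w$ with $\phi$ an Agmon distance to $U$ and integrating by parts), together with the crude bound $\|w(s,\cdot)\|_\infty\le Cs^2$, then gives $\int_{[0,1]\setminus U}w(s,x)^2dx\le Cs^4e^{-c\sqrt s}\to0$. Finally, discarding the nonnegative term $\int_0^1(w'-swm')^2dx$ in \eqref{1.4} gives $\lambda_1^{\mathcal N}(s)\ge\int_0^1 c\,w(s,x)^2dx\ge\min_{\overline U}c-o(1)$; letting $U\downarrow\mathcal M$ and using the finiteness of $\mathcal M$ and continuity of $c$ completes the argument.

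\medskip
\noindent\emph{Main obstacle.}
The delicate direction is the lower bound, and within it the confinement of $w(s,\cdot)$ near $\mathcal M$. Excluding concentration at non-maximal critical points is not governed by the quadratic barrier $s^2(m')^2$ — which degenerates exactly there — but by the sign of the linear term $sm''$, and this is precisely where non-degeneracy of the critical points is indispensable. The second subtle point is the behaviour at $x=0$ and $x=1$: one must carry out the weighted estimate compatibly with the $s$-dependent Robin condition $w'(0)-sw(0)m'(0)=w'(1)-sw(1)m'(1)=0$ in \eqref{1.3}, checking in each boundary case that the resulting boundary term either carries a favourable sign or is at worst polynomially bounded in $s$, so that it does not spoil the exponential smallness.
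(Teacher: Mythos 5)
Your two-step structure (upper bound by explicit test functions in the variational characterization, lower bound by showing $w(s,\cdot)^2$ concentrates on $\mathcal M$) matches the strategy the paper uses to prove its generalization Theorem~\ref{th1.2}, and the upper-bound test function $w_s=\chi\,e^{sm}$ is essentially the construction of Lemma~\ref{l2.1}. For the lower bound, however, you take a genuinely different route. The paper passes to a weak-$*$ limit measure $\mu$ of $w(s_j,\cdot)^2\,dx$ and shows $\mu([0,1]\setminus\mathcal M)=0$ by pointwise comparison with explicit subsolutions of $-u''=-\tfrac12 c_0^2 s_j^2 u$ on monotone intervals (Lemma~\ref{l2.3}) together with, in the degenerate cases, a curve-rearrangement argument using Sard's lemma (Lemma~\ref{l2.4}). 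You instead prove quantitative $L^2$-decay away from $\mathcal M$ by an Agmon-type weighted energy estimate, using that non-degeneracy forces $V_s-\lambda_1^{\mathcal N}(s)\gtrsim s$ off any neighbourhood of $\mathcal M$. In the non-degenerate setting your approach is sound and in some ways cleaner and more quantitative; what it does not do is extend to the degenerate $m$ of Theorem~\ref{th1.2}, where $(m')^2$ and $m''$ both vanish on whole segments so the linear potential barrier disappears, which is precisely why the paper develops the heavier measure-theoretic machinery. Your identification of the two delicate points (the role of the sign of $sm''$ at non-maximal critical points, and the $s$-dependent Robin boundary condition in \eqref{1.3}) is accurate.

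Two technical slips to repair. (a) The intermediate bound $\int_{[0,1]\setminus U}w^2\le Cs^4e^{-c\sqrt s}$ is false as written: the Agmon weight $\phi$ vanishes on $\partial U$, so the interface region contributes $O(1)$ mass rather than exponentially small mass. The standard fix is to nest $\mathcal M\subset V\subset\subset U$, base the weight on $V$ (so $\phi\ge c_0\sqrt s\,\mathrm{dist}(V,\partial U)>0$ on $[0,1]\setminus U$), and bound the cutoff error on $U\setminus V$ by the normalization $\int_0^1 w^2=1$. (b) The crude bound $\|w(s,\cdot)\|_\infty\le Cs^2$ should be justified: from \eqref{1.4} one has $\|w'-swm'\|_{L^2}^2\le 2\max|c|$, whence $\|w'\|_{L^2}\le Cs$, and the one-dimensional Sobolev embedding then gives $\|w\|_\infty\le Cs$, which is more than enough.
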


Theorem \ref{th1.1} is of significant importance, and it has found new interesting applications in several
classical reaction-diffusion-advection problems arising from ecology; for example, \cite{CL1,HL,Lam,LNi} to list a few.
Obviously, Theorem \ref{th1.1} deals with only the case that $m$ has finitely many non-degenerate isolated points of local maximum.
Later on, in the companion paper \cite{CL2}, Chen and Lou studied the limiting behavior of $\lambda_1^\mathcal{N}(s)$ when the diffusion and advection rates are both large or small.

In \cite{BHN}, Berestycki, Hamel and Nadirashvili investigated problems like \eqref{p} with
$c(x)\equiv0$ and under either Dirichlet, Neumann or periodic boundary condition in arbitrary space dimensions.
They focussed on the situation that the drift velocity (advection term) ${\bf v}$ is divergence-free, and
established the equivalent connections between the boundedness of the principal eigenvalue with regard to large drift and the existence of the first integrals
of the velocity field ${\bf v}$. As consequences of their results, important influences of large advection or drift on the speed of
propagation of pulsating travelling fronts were revealed there.

As far as the periodic boundary problem is concerned, we assume that $m\in C^2(\bR)$ and $c\in C(\bR)$ and both of them are $1$-periodic
(that is, $f(x)=f(x+1),\,f\in\{m,\,c\},\,\forall x\in\bR$). Then, there is a unique principal eigenvalue to the eigenvalue problem:
 \bes
 -\varphi''(x)-2sm'(x)\varphi'(x)+c(x)\varphi(x)=\lambda\varphi(x),\ x\in\bR;\ \ \
 \varphi(x)=\varphi(x+1),\ x\in\bR.
 \label{1.p1}
 \ees
If one attempts to apply the result of \cite{BHN} to study the limiting behavior of
the principal eigenvalue of problems \eqref{1.1} and \eqref{1.p1} as $s\to\infty$,
the restricted condition on $m$ imposed there now reduces to require that $m$ is constant on $[0,1]$. This is a trivial case.
Here, of our interest is a general nonconstant function $m$ so that spatial heterogeneity of environment can be reflected.

When $\hbar_1=\hbar_2=0$ in \eqref{p}, we have the Dirichlet eigenvalue problem:
 \bes
 -\varphi''(x)-2sm'(x)\varphi'(x)+c(x)\varphi(x)=\lambda\varphi(x),\ 0<x<1;\ \ \
 \varphi(0)=\varphi(1)=0.
 \label{4.1}
 \ees
Then, Theorem 0.3 of \cite{BHN}, one of the main results, only covers the case of $m(x)=ax$ on $[0,1]$ for some constant $a$, and states that
the principal eigenvalue of \eqref{4.1} is bounded as $s\to\infty$ if and only if $a=0$. It turns out that this
is a very special case to be treated in our present work. Indeed, as long as $m'$ changes sign at most finitely many times,
as $s\to\infty$, we are able to derive a necessary
and sufficient condition to guarantee the boundedness of the principal eigenvalue of the general problem \eqref{p}.
Moreover, once the principal eigenvalue is bounded with regard
to large $s$,  the asymptotic behaviors of the principal eigenvalue and its eigenfunction will be precisely given.
See our main results: Theorems \ref{th1.2}, \ref{th1.2a} and \ref{th1.3} below.  We would like to point out that the analysis of \cite{BHN}
seems inapplicable to the general problem \eqref{p}.

In the direction of research on the effect of large advection on the principal eigenvalue, it is worth mentioning a series of impressive work \cite{DEF,DF,Fr},
done by Friedman and his coauthors more than 40 years ago, which concerned the Dirichlet boundary condition case and obtained refined
upper and lower bounds for the principal eigenvalue when the advection coefficient is large. Nevertheless, for \eqref{4.1},
their results seem to apply only to a few special kinds of $m$; for example, $m'(x)$ changes sign on $[0,1]$ at most once.
Regarding other related works, one may refer to \cite{Ec,W} and the references therein.
We further remark that no information of the associated principal eigenfunction was provided in \cite{BHN,DEF,DF,Ec,Fr,W}.

\subsection{Our main results} As mentioned before, the objective of the present paper is to determine, for a general advection function $m$, as $s\to\infty$, the limiting behaviors of the principal eigenvalue and its eigenfunction to problems \eqref{p} and \eqref{1.p1}. Throughout the paper, unless otherwise specified, we always
assume that
 \bes
 \label{a}\ \ \
 \mbox{$m$ is not constant and $m'(x)$\ changes sign at most finitely many times on $[0,1]$}.
 \ees
Hence, here we allow $m$ to have various natural kinds of degeneracy.

Before stating the main results of this paper, we need to classify the set of points of  local maximum points of $m$ and then introduce necessary notation.
We call that

 \begin{itemize}

 \item[$\bullet$] A {\it point of local maximum} of $m$ is a point $x\in[0,1]$ such that there is a small $\epsilon_0>0$ such that $m(x)\geq m(y)$ in $(x-\epsilon_0,x+\epsilon_0)\cap[0,1]$, and such $x$ is said to be an {\it interior point of local maximum} if $x\in(0,1)$;

 \item[$\bullet$] An {\it isolated point of local maximum} of $m$ is a point $x^{I}\in[0,1]$ such that there is a small $\epsilon_0>0$ such that $m(x^{I})>m(x)$ in $(x^{I}-\epsilon_0,x^{I}+\epsilon_0)\cap[0,1]\setminus\{x^{I}\}$;

 \item[$\bullet$] A {\it segment of local maximum} of $m$ is a closed interval $[a,b]\subset[0,1]$ such that there is a small $\epsilon_0>0$ such that
$m$ is constant on the closed interval $[a,b]$, $m$ is monotone (non-increasing or non-decreasing) on $[a-\epsilon_0,a]\cap[0,1]$ and $[b,b+\epsilon_0]\cap[0,1]$, and
for any small $\epsilon>0$, $\{x\in[0,1]:\ m'(x)\not=0\}\cap(a-\epsilon,a)\not=\emptyset$ if $0<a$ and $\{x\in[0,1]:\ m'(x)\not=0\}\cap(b,b+\epsilon)\not=\emptyset$ if $b<1$.

 \end{itemize}
In addition, when $m$ is a $1$-periodic function, we make the convention from now on that the isolated point and segment of local maximum of $m$ is understood
to be restricted to the one period interval $[0,1]$ with the same definitions as above.

Under the assumption \eqref{a}, it is clear that $m$ admits at most finitely many isolated points of local maximum. For later purpose,
we will have to use different notation to distinguish all possible {\it segments of local maximum} of $m$  as follows.

\vskip6pt

$[a^{{i}},b^{{j}}]$ with $0<a^{{i}}<b^{{j}}<1$ and $i,\,j\in\{I,\,D\}$:\ \ $m$ is constant on $[a^{{i}},b^{{j}}]$,
$m$ is non-decreasing (non-increasing, respectively) in $[a^{{i}}-\epsilon_0,a^{{i}}]$ for some small $\epsilon_0>0$
and $\{x\in[0,1]:\ m'(x)>0\}\cap(a^{{i}}-\epsilon,a^{{i}})\not=\emptyset$ ($\{x\in[0,1]:\ m'(x)<0\}\cap(a^{{i}}-\epsilon,a^{{i}})\not=\emptyset$, respectively)
for any small $\epsilon>0$ if $i=I$ (if $i=D$, respectively),
and $m$ is non-decreasing (non-increasing, respectively) in $[b^{{j}},b^{{j}}+\epsilon_0]$ for some small $\epsilon_0>0$
and $\{x\in[0,1]:\ m'(x)>0\}\cap(b^{{j}},b^{{j}}+\epsilon)\not=\emptyset$ ($\{x\in[0,1]:\ m'(x)<0\}\cap(b^{{j}},b^{{j}}+\epsilon)\not=\emptyset$, respectively)
for any small $\epsilon>0$ if $j=I$ (if $j=D$, respectively).

$[0,a^{I}]$ with $0<a^{I}<1$:\ \  $m$ is constant on $[0,a^{I}]$ and $m$ is non-decreasing in $[a^{I},a^{I}+\epsilon_0]$ for some small $\epsilon_0>0$ and $\{x\in[0,1]:\ m'(x)>0\}\cap(a^{{I}},a^{{I}}+\epsilon)\not=\emptyset$ for any small $\epsilon>0$.

$[0,a^{D}]$ with $0<a^{D}<1$:\ \  $m$ is constant on $[0,a^{D}]$ and $m$ is non-increasing in $[a^{D},a^{D}+\epsilon_0]$ for some small $\epsilon_0>0$ and $\{x\in[0,1]:\ m'(x)<0\}\cap(a^{{D}},a^{{D}}+\epsilon)\not=\emptyset$ for any small $\epsilon>0$.

$[a^{I},1]$ with $0<a^{I}<1$:\ \  $m$ is constant on $[a^{I},1]$ and $m$ is non-decreasing in $[a^{I}-\epsilon_0,a^{I}]$ for some small $\epsilon_0>0$ and $\{x\in[0,1]:\ m'(x)>0\}\cap(a^{{I}}-\epsilon,a^{I})\not=\emptyset$ for any small $\epsilon>0$.

$[a^{D},1]$ with $0<a^{D}<1$:\ \  $m$ is constant on $[a^{D},1]$ and $m$ is non-increasing in $[a^{D}-\epsilon_0,a^{D}]$ for some small $\epsilon_0>0$ and $\{x\in[0,1]:\ m'(x)<0\}\cap(a^{{D}}-\epsilon,a^{D})\not=\emptyset$ for any small $\epsilon>0$.

\vskip3pt
In the above, the capital letters $I$ and $D$ represent increasing (i.e., non-decreasing)
and decreasing (i.e., non-increasing), respectively; for instance, $[a^{{I}},b^{{D}}]$ means that $m$ increases in a left neighbourhood of $a^{{I}}$ and decreases in a right
neighbourhood of $b^{{D}}$ while $m$ is constant on $[a^{{I}},b^{{D}}]$.

Under the assumption \eqref{a}, it is noted that $m$ admits at most finitely many segments $[a^{{I}},b^{{D}}]$ and $[a^{{D}},b^{{I}}]$ of local maximum,
and it has at most countably many segments $[a^{{I}},b^{{I}}]$ and $[a^{{D}},b^{{D}}]$ of local maximum. We need more notation as follows.

Given a closed interval $[a,b]\subset[0,1]$ with $0<a<b<1$ and $i,j\in\{\mathcal{N},\, \mathcal{D}\}$, we denote by $\lambda_1^{{ij}}(a,b)$
the principal eigenvalue of the elliptic eigenvalue problem:
 \bes
 -\varphi''(x)+c(x)\varphi(x)=\lambda\varphi(x),\ a<x<b
 \nonumber
 \ees
subject to Neumann boundary condition (Dirichlet boundary condition, respectively) at the left boundary point $a$ if $i=\mathcal{N}$ (if $i=\mathcal{D}$, respectively), and Neumann boundary condition (Dirichlet boundary condition, respectively) at the right boundary point $b$ if $j=\mathcal{N}$ (if $j=\mathcal{D}$, respectively).

Given $[0,b]\subset[0,1]$ with $0<b<1$ and $i\in\{\mathcal{N},\, \mathcal{D}\}$, denote by $\lambda_1^{{\mathcal{R}i}}(0,b)$
the principal eigenvalue of
 \bes
 -\varphi''(x)+c(x)\varphi(x)=\lambda\varphi(x),\ 0<x<b;\ \ \ -\hbar_1\varphi'(0)+\ell_1\varphi(0)=0
 \nonumber
 \ees
with Neumann boundary condition (Dirichlet boundary condition, respectively ) at $b$ if $i=\mathcal{N}$ (if $i=\mathcal{D}$, respectively).

Given $[a,1]\subset[0,1]$ with $0<a<1$ and $i\in\{\mathcal{N},\, \mathcal{D}\}$, denote by $\lambda_1^{{i\mathcal{R}}}(a,1)$
the principal eigenvalue of
 \bes
 -\varphi''(x)+c(x)\varphi(x)=\lambda\varphi(x),\ a<x<1; \ \ \ \hbar_2\varphi'(1)+\ell_2\varphi(1)=0
 \nonumber
 \ees
with Neumann boundary condition (Dirichlet boundary condition, respectively ) at $a$ if $i=\mathcal{N}$ (if $i=\mathcal{D}$, respectively).


According to the assumption \eqref{a}, it is obviously seen that the set of points of local maximum of $m$ can be represented by
 \bes
 \mathcal{M}=\cup_{i=1}^9\mathcal{M}_i,
 \label{max}
 \ees
in which
 \bes
 \left.\begin{array}{lll}
 \medskip
 \displaystyle
 \mathcal{M}_1=\{x^{I}_i\}_{i=1}^{h_1},\ \ \ \ \mathcal{M}_2=\{[a_i^{{I}},b_i^{{I}}]\}_{i=1}^{h_2},
 \ \ \ \ \mathcal{M}_3=\{[a_i^{{I}},b_i^{{D}}]\}_{i=1}^{h_3},\ \ \ \ \mathcal{M}_4=\{[a_i^{{D}},b_i^{{I}}]\}_{i=1}^{h_4},\\
 \medskip
 \displaystyle
 \mathcal{M}_5=\{[a_i^{{D}},b_i^{{D}}]\}_{i=1}^{h_5},\ \ \mathcal{M}_6\subset\{[0,a^{{I}}]\},\ \
 \mathcal{M}_7\subset\{[0,a^{{D}}]\},\ \ \mathcal{M}_8\subset\{[a^{{I}},1]\},\ \ \mathcal{M}_9\subset\{[a^{{D}},1]\},
 \end{array}
 \right.
 \nonumber
 \ees
where $h_1,\,h_3,\,h_4$ are finite integers while $h_2,\,h_5$ may be finite integers or infinity. We allow some $\mathcal{M}_i$ to be empty.
According to our definitions, $x\cap y=\emptyset$ for any $x,\,y\in\mathcal{M}_i,\,x\not=y,\,i\in\{2,3,4,5\}$, and
$x\cap y=\emptyset$ for any $x\in\mathcal{M}_i,\,y\in\mathcal{M}_j$ if $i\not=j$. Moreover, either $\mathcal{M}_6$ or
$\mathcal{M}_7$ or both of them must be an empty set, and the same is true for $\mathcal{M}_8$ and $\mathcal{M}_9$. Though
some $\mathcal{M}_i$ may be empty, it is apparent that $\mathcal{M}\not=\emptyset$.

For simplicity, let us also set
 \bes
 \left.\begin{array}{lll}
 \medskip
 \displaystyle
 \mathfrak{L}&=&
 \displaystyle
 \min\Big\{\min\{\lambda_1^{\mathcal{ND}}(a^I_i,b^I_i):\ [a^I_i,b^I_i]\in\mathcal{M}_2\},\ \
 \min\{\lambda_1^{\mathcal{NN}}(a^I_i,b^D_i):\ [a^I_i,b^D_i]\in\mathcal{M}_3\},\\
 \medskip
 &&\ \ \ \ \ \ \ \
 \displaystyle
 \min\{\lambda_1^{\mathcal{DD}}(a^D_i,b^I_i):\ [a^D_i,b^I_i]\in\mathcal{M}_4\},\ \
 \min\{\lambda_1^{\mathcal{DN}}(a^D_i,b^D_i):\ [a^D_i,b^D_i]\in\mathcal{M}_5\}\Big\}.
 \end{array}
 \right.
 \nonumber
 \ees
We want to stress that $\min\{\lambda_1^{\mathcal{ND}}(a^I_i,b^I_i):\ [a^I_i,b^I_i]\in\mathcal{M}_2\}$ and
$\min\{\lambda_1^{\mathcal{DN}}(a^D_i,b^D_i):\ [a^D_i,b^D_i]\in\mathcal{M}_5\}$
are well defined since $\lambda_1^{\mathcal{ND}}(a^I_i,b^I_i)\to\infty$ when $b^I_i-a^I_i\to0$ and $\lambda_1^{\mathcal{DN}}(a^D_i,b^D_i)\to\infty$ when
$b^D_i-a^D_i\to0$.

Our first result concerns the limiting behavior of the principal eigenvalue $\lambda_1(s)$ for problem \eqref{p},
and reads as follows.

 \begin{theo}\label{th1.2} Assume that the set $\mathcal{M}$ of points of local maximum of $m$ is given by \eqref{max}. Then the following assertions hold.

 \begin{itemize}

\item[(i)] $\lim\limits_{s\to\infty}\lambda_1(s)=\infty$ if and only if

{\rm{(i-1)}}\ $\mathcal{M}=\mathcal{M}_1\subset\{0,1\}$ when $\ell_1>0$ and $\ell_2>0$;\

{\rm{(i-2)}}\ $\mathcal{M}=\mathcal{M}_1=\{0\}$ when $\ell_1>0$ and $\ell_2=0$;\

{\rm{(i-3)}}\ $\mathcal{M}=\mathcal{M}_1=\{1\}$ when $\ell_1=0$ and $\ell_2>0$.

\item[(ii)] If $\lim\limits_{s\to\infty}\lambda_1(s)<\infty$ , then
 \bes
 \left.\begin{array}{lll}
 \medskip
 \displaystyle
 \lim\limits_{s\to\infty}\lambda_1(s)&=&
 \medskip
 \displaystyle
 \min\Big\{\mathfrak{L},\ \ \min\{c(x):\ x\in\mathcal{M}_1\setminus\{0,\,1\}\},\ \ \lambda_1^{\mathcal{RD}}(0,a^I)\ (\mbox{if}\ [0,a^I]\in\mathcal{M}_6),\\
 &&\ \ \ \ \ \ \
 \medskip
 \displaystyle
 \lambda_1^{\mathcal{RN}}(0,a^D) \ (\mbox{if}\ [0,a^D]\in\mathcal{M}_7),\ \ \lambda_1^{\mathcal{NR}}(a^I,1)\ (\mbox{if}\ [a^I,1]\in\mathcal{M}_8),\\
 &&\ \ \ \ \ \ \
 \medskip
 \displaystyle
 \lambda_1^{\mathcal{DR}}(a^D,1)\ (\mbox{if}\ [a^D,1]\in\mathcal{M}_9),\ \ c(0)\ (\mbox{if\ $0\in\mathcal{M}_1$ and $\ell_1=0$}),\\
 &&\ \ \ \ \ \ \
 \displaystyle
 c(1)\ (\mbox{if\ $1\in\mathcal{M}_1$ and $\ell_2=0$})\, \Big\}.
 \end{array}
 \right.
 \nonumber
 \ees

\end{itemize}
\end{theo}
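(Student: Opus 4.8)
The plan is to work throughout with the two equivalent formulations of \eqref{p}. Multiplying the equation by $e^{2sm}$ puts it in divergence form $-(e^{2sm}\varphi')'+ce^{2sm}\varphi=\lambda e^{2sm}\varphi$, so that
\[
\lambda_1(s)=\inf_{\varphi}\ \frac{\int_0^1 e^{2sm}\big[(\varphi')^2+c\varphi^2\big]\,dx+\tfrac{\ell_1}{\hbar_1}e^{2sm(0)}\varphi(0)^2+\tfrac{\ell_2}{\hbar_2}e^{2sm(1)}\varphi(1)^2}{\int_0^1 e^{2sm}\varphi^2\,dx},
\]
the infimum being over $H^1(0,1)$, with the convention that a Dirichlet endpoint ($\hbar_i=0$) is imposed as a constraint and the corresponding boundary term omitted; in particular $\lambda_1(s)\ge\min_{[0,1]}c$ for every $s$. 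The substitution $w=e^{sm}\varphi$ turns \eqref{p} into the Schr\"odinger problem $-w''+V_sw=\lambda w$ on $(0,1)$, with $V_s=s^2(m')^2+sm''+c$ and $s$-dependent Robin conditions; this is what I would use to localise the eigenfunction. Since $\lambda_1(s)\ge\min c$, along any sequence $s_n\to\infty$ one may extract a subsequence with $\lambda_1(s_n)\to L\in[\min c,+\infty]$, and the task is to identify $L$.

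\emph{Upper bound: the ``only if'' in (i) and ``$\le$'' in (ii).} For each element of $\mathcal M$ I would build an admissible test function whose Rayleigh quotient converges to the value attached to that element. At an isolated point of local maximum $x^{I}$ (or a boundary local maximum carrying a Neumann condition), take $\varphi\equiv1$ on a fixed small ball about $x^{I}$ and cut it off linearly: as $m(x^{I})$ strictly exceeds $m$ on the rest of the support, the weight $e^{2sm}\varphi^2$ concentrates at $x^{I}$, the gradient term is exponentially negligible, and the quotient tends to $c(x^{I})$. For a segment of local maximum $[a,b]$ (one of $\mathcal M_2,\dots,\mathcal M_9$), let $\psi$ be the principal eigenfunction of $-\psi''+c\psi=\lambda\psi$ on $[a,b]$ equipped, on each interior side, with a Dirichlet condition when $m$ is non-constant and strictly larger than $\max_{[a,b]}m$ in every one-sided neighbourhood there and a Neumann condition otherwise, together with the inherited Robin condition at an endpoint in $\{0,1\}$; extend $\psi$ by $0$ across each Dirichlet side and by the endpoint value followed by a linear cut-off across each Neumann side. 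Because $m$ is constant on $[a,b]$ while $e^{2sm}$ is exponentially smaller than $e^{2s\max_{[a,b]}m}$ on the extended Neumann-side pieces, the quotient of the extended function tends to the corresponding $\lambda_1^{ij}(a,b)$ (respectively $\lambda_1^{\mathcal R i}(0,b)$, $\lambda_1^{i\mathcal R}(a,1)$). Taking the smallest of all these bounds gives $\limsup_{s\to\infty}\lambda_1(s)\le$ the claimed minimum; in particular $\limsup_{s\to\infty}\lambda_1(s)<\infty$ unless every point of $\mathcal M$ is a boundary point carrying a non-Neumann condition, which is precisely the trichotomy (i-1)--(i-3), and this is the ``only if'' direction of (i).

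\emph{Concentration of the eigenfunction.} Fix $s_n\to\infty$ with $\lambda_1(s_n)\to L$, set $w_n=e^{s_nm}\varphi_n$ normalised by $\int_0^1 w_n^2=1$, so that $\mu_n:=w_n^2\,dx=e^{2s_nm}\varphi_n^2\,dx$ is a probability measure on $[0,1]$; pass to a subsequence with $\mu_n\rightharpoonup\mu$. Testing $-w_n''+V_{s_n}w_n=\lambda_1(s_n)w_n$ against $\chi^2 w_n$ for a cut-off $\chi$ supported in $\{|m'|\ge\delta\}$ yields $\int\chi^2V_{s_n}w_n^2\le\lambda_1(s_n)+C_\delta$, and since $V_{s_n}\ge\tfrac12 s_n^2\delta^2$ there for $n$ large, $\mu_n(\{|m'|\ge\delta\})\to0$; hence the support of $\mu$ lies in $\{m'=0\}$. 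A local rescaling of $V_{s_n}$ near an isolated zero of $m'$ (to an Airy/harmonic-oscillator model, whose eigenvalues diverge) rules out mass at a zero of $m'$ that is not a point of local maximum; and at a boundary point carrying a non-Neumann condition the penalty $\tfrac{\ell_i}{\hbar_i}e^{2s_nm(\text{endpoint})}\varphi_n(\text{endpoint})^2$, with $m$ maximal there, is incompatible with $\lambda_1(s_n)$ bounded. Thus if $L<\infty$ then $\mu$ is a probability measure supported on $\mathcal M$ giving no mass to any such boundary point --- which in the setting of (i) is impossible, forcing $L=\infty$ and finishing the ``if'' direction of (i).

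\emph{Lower bound in (ii), and the main obstacle.} In the setting of (ii), pick $\Gamma^*\in\mathcal M$ with $\mu(\Gamma^*)>0$; I would show $L\ge$ (the value attached to $\Gamma^*$), which combined with the upper bound yields the stated formula. Localise near $\Gamma^*=[a,b]$ with a cut-off $\eta$ equal to $1$ on $[a,b]$ and supported in a slightly larger interval meeting no competing element of $\mathcal M$, and use the eigenfunction identity against $\eta^2\varphi_n$:
\[
\lambda_1(s_n)\!\int_0^1\! e^{2s_nm}(\eta\varphi_n)^2=\int_0^1\! e^{2s_nm}\Big[\big((\eta\varphi_n)'\big)^2+c(\eta\varphi_n)^2-(\eta')^2\varphi_n^2\Big]+(\text{penalties at }\Gamma^*\cap\{0,1\}).
\]
On the slopes flanking $[a,b]$ one has $|m'|$ bounded below away from $\{a,b\}$, so by the concentration estimate the $(\eta')^2\varphi_n^2$ term is negligible relative to $\int_{\Gamma^*}e^{2s_nm}\varphi_n^2$; on a side where $m>\max_{\Gamma^*}m$ just outside, a barrier/Agmon-type estimate for $w_n$ should force the trace $w_n(\text{endpoint})^2=e^{2s_nm(\text{endpoint})}\varphi_n(\text{endpoint})^2$ to be negligible on the same scale, so that the renormalised restrictions of $w_n$ to $[a,b]$ are asymptotically admissible competitors for the $\lambda_1^{ij}(a,b)$-problem (vanishing trace on Dirichlet sides, free on Neumann sides, Robin at $0$ or $1$). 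Passing to the limit (using $m\equiv\max_{\Gamma^*}m$ on $[a,b]$, $\mu(\Gamma^*)>0$, and lower semicontinuity of the Dirichlet energy) then gives $L\ge\lambda_1^{ij}(a,b)$. The genuinely hard points --- where I expect the bulk of the work to lie --- are this boundary-layer estimate showing the trace at a Dirichlet-type endpoint of $\Gamma^*$ is of smaller order than $e^{-s_n\max_{\Gamma^*}m}$, the corresponding quantitative divergence of the boundary penalty in case (i), and the bookkeeping when $\mathcal M_2$ or $\mathcal M_5$ has infinitely many segments: there one exploits $\lambda_1^{\mathcal{ND}}(a_i^I,b_i^I)\to\infty$ and $\lambda_1^{\mathcal{DN}}(a_i^D,b_i^D)\to\infty$ as the lengths shrink, so that only finitely many segments are relevant to the minimum and only those can carry positive mass while $\lambda_1(s_n)$ remains bounded.
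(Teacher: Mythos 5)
Your overall architecture — upper bound from test functions, lower bound from a concentration measure $\mu$ for $w_n=e^{s_nm}\varphi_n$, then identification of $\operatorname{supp}\mu$ — matches the paper's strategy, and the test-function construction and the localisation identity are both in the right spirit. However there is a genuine gap exactly at the place you flag as hard, and one of the assertions on which your lower bound rests is false for the class of $m$ this theorem covers. You write that on the slopes flanking a segment $[a,b]$ of local maximum ``one has $|m'|$ bounded below away from $\{a,b\}$''. Under hypothesis \eqref{a}, $m'$ is only required to change sign at most finitely many times; on each monotone piece $m'$ may vanish on a set of positive Lebesgue measure, and indeed $m$ may be constant on countably many sub-intervals of the slope (this is precisely Case~B, $|\mathcal C|>0$, in the paper's Lemma~\ref{l2.4}, and Lemma~\ref{l2.4a} with $L=\infty$). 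Consequently your cut-off argument, which yields $\mu_n(\{|m'|\ge\delta\})\to0$, only shows $\operatorname{supp}\mu\subset\{m'=0\}$; it does not exclude mass at, say, a plateau that is not a local maximum, at the fat critical set inside a monotone interval, or at the endpoints $a,b$ of a segment. Your ``Airy/harmonic-oscillator rescaling'' covers isolated zeros of $m'$ with $m''\ne0$, but not the degenerate situations that are the actual subject of the theorem, and the ``barrier/Agmon-type'' trace estimate at a Dirichlet side cannot be run with the cheap lower bound $|m'|\ge\delta$, precisely because no such bound is available.

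The paper fills this gap with a mechanism you do not have: an a priori $L^\infty$ bound on $w(s_j,\cdot)$ over the relevant interval (claim \eqref{claim1}), obtained by a rearrangement-of-level-curves argument using Sard's lemma. Given two nearby points with a definite drop in $w$, one concatenates the decreasing arcs of $w$, compares the resulting monotone profile with the affine minimiser, and derives a contradiction from $\lambda_1(s_j)\le\max|c|$. This is what makes the measure vanish on the degenerate slopes (Lemmas \ref{l2.4}, \ref{l2.4a} and Remark \ref{r-a}), what forces $w_*(a_2)=w_*(a_3)=0$ at the interior Dirichlet sides of a segment in Theorem~\ref{th2.2}, and what drives the contradiction in the ``if'' direction of~(i) once $\mu(\{0\})=1$ is forced. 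Without it, your lower bound is correct only for the non-degenerate sub-case essentially covered by Chen--Lou, and your ``if'' direction of~(i) and your treatment of endpoints of segments remain unproved. You have also not addressed the countable-additivity bookkeeping needed when a monotone slope contains infinitely many constancy plateaus (Lemma~\ref{l2.4a}); the remark about $\lambda_1^{\mathcal{ND}}(a_i^I,b_i^I)\to\infty$ handles infinitely many segments \emph{of local maximum} of $\mathcal M_2,\mathcal M_5$, but not the plateaus on the slopes.
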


\begin{re}\label{r-result} Concerning Theorem \ref{th1.2}, we would like to make the following comments.

\begin{itemize}

\item[(i)] In comparison with Theorem \ref{th1.1}, when the Neumann problem \eqref{1.1} is concerned, Theorem \ref{th1.2} covers the case that $m$ has segments of local maximum;
and even if $m$ has isolated points of local maximum, unlike Theorem \ref{th1.1}, we do not impose non-degeneracy condition on those points.

\item[(ii)] From Theorem \ref{th1.2}, we can see that when $0$ or $1$ is an isolated point of local maximum of $m$, the limiting value of the principal eigenvalue, when finite,
is affected by such a boundary point only if the Neumann boundary condition is prescribed there.

\item[(iii)] When $\ell_1+\ell_2>0$, in view of Theorem \ref{th1.2}, we have the following observations.

\begin{itemize}

\item[(iii-1)] If $\ell_1,\,\ell_2>0$, then $\lim\limits_{s\to\infty}\lambda_1(s)=\infty$ if and only if
either $m$ is strictly increasing or strcitly decreasing on $[0,1]$, or $m$ is strictly decreasing on $[0,x_0]$ while strictly increasing on $[x_0,1]$ for some $x_0\in(0,1)$.

\item[(iii-2)] If $\ell_1=0,\,\ell_2>0$, then $\lim\limits_{s\to\infty}\lambda_1(s)=\infty$ if and only if $m$ is strictly increasing $[0,1]$.

\item[(iii-3)] If $\ell_1>0,\,\ell_2=0$, then $\lim\limits_{s\to\infty}\lambda_1(s)=\infty$ if and only if $m$ is strictly decreasing $[0,1]$.

\end{itemize}

\item[(iv)] For the Dirichlet eigenvalue problem \eqref{4.1}, the main results of \cite{DEF} tell us that if
$m'(x)>0$ or $m'(x)<0$ on $[0,1]$, then there exists a constant $\vartheta>1$ such that
 $$
 \mbox{$s^2/\vartheta\leq\lambda_1(s)\leq \vartheta s^2$\ \ \ as\ $s\to\infty$},
 $$
and if $m'(x)<0$ in $[0,x_0)$, $m'(x)>0$ in $(x_0,1]$, and $|x-x_0|^{1+\nu}/\sigma\leq|m'(x)|\leq\sigma|x-x_0|^{1+\nu},\,\forall x\in[0,1]$
for some $x_0\in(0,1)$ and constants $\sigma>1,\,\nu>0$, then there exists a constant $\vartheta>1$ such that
 $$
 \mbox{$s^{2\over{\nu+1}}/\vartheta\leq\lambda_1(s)\leq \vartheta s^{2\over{\nu+1}}$\ \ \ as\ $s\to\infty$.}
 $$
On the other hand, \cite{Fr} showed that if $c(x)=c$ is constant, $m'(0)>0$, $m'(1)<0$, and $m$ has only one isolated interior point of local maximum in $(0,1)$, then
$\lambda_1(s)=c+O(se^{-s})$ as $s\to\infty$. However, the asymptotic growth rate of $\lambda_1(s)$ is not known in general.

\end{itemize}

\end{re}

We next consider the periodic eigenvalue problem \eqref{1.p1}. Without loss of generality, we assume that $m'(0)>0$ (and so $m'(1)>0$)
due to $1$-periodicity of $m$. Then the following result holds.

\begin{theo}\label{th1.2a} Assume that $m'(0)>0$ and the set of points of local maximum of $m$ is given by \eqref{max},
and denote by $\lambda_1^\mathcal{P}(s)$ the principal eigenvalue of \eqref{1.p1}. Then
 \bes
 \lim\limits_{s\to\infty}\lambda_1^\mathcal{P}(s)=
 \min\Big\{\mathfrak{L},\ \ \min\{c(x):\ x\in\mathcal{M}_1\}\Big\}.
 \nonumber
 \ees

\end{theo}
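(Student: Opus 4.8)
The plan is to bracket $\lambda_1^{\mathcal P}(s)$ between the Neumann and Dirichlet principal eigenvalues on $[0,1]$ and then read off both limits from Theorem~\ref{th1.2}. First I would record the variational characterization of the periodic principal eigenvalue: exactly as for \eqref{1.2}, since $m,c$ are $1$-periodic the operator $\varphi\mapsto-(e^{2sm}\varphi')'+ce^{2sm}\varphi$ is self-adjoint in $L^2(e^{2sm}dx)$ on the circle, so $\lambda_1^{\mathcal P}(s)$ equals the minimum of $\int_0^1 e^{2sm}[(\varphi')^2+c\varphi^2]\,dx$ over $1$-periodic $\varphi$ with $\int_0^1 e^{2sm}\varphi^2\,dx=1$, and is attained by a positive $1$-periodic eigenfunction. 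Denote by $\lambda_1^{\mathcal D}(s)$ the principal eigenvalue of the Dirichlet problem \eqref{4.1}, which has the same characterization with the minimum over $H^1_0(0,1)$; by \eqref{1.2}, $\lambda_1^{\mathcal N}(s)$ is the same minimum over all of $H^1(0,1)$. The admissible class for \eqref{4.1} ($H^1_0(0,1)$) is contained in that for \eqref{1.p1} (functions in $H^1(0,1)$ with equal endpoint values), which is contained in that for \eqref{1.1} (all of $H^1(0,1)$); hence
\[
\lambda_1^{\mathcal N}(s)\le\lambda_1^{\mathcal P}(s)\le\lambda_1^{\mathcal D}(s),\qquad \forall s\in\bR .
\]

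Next I would compute the two outer limits via Theorem~\ref{th1.2}. The key point is that $m'(0)>0$ together with $1$-periodicity gives $m'(1)=m'(0)>0$, so $m$ is strictly increasing on a one-sided neighbourhood of each of $0$ and $1$; consequently neither $0$ nor $1$ is a point of local maximum of $m$ and $m$ is not constant near $0$ or near $1$. In the notation of \eqref{max} this says $0,1\notin\mathcal M_1$ and $\mathcal M_6=\mathcal M_7=\mathcal M_8=\mathcal M_9=\emptyset$, and, since $\mathcal M\neq\emptyset$ while $\mathcal M\cap\{0,1\}=\emptyset$, that $\mathcal M$ is not contained in $\{0,1\}$. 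Applying Theorem~\ref{th1.2} to \eqref{p} with $\hbar_1=\hbar_2=1,\ \ell_1=\ell_2=0$ (the Neumann problem \eqref{1.1}): the elementary bounds $\min_{[0,1]}c\le\lambda_1^{\mathcal N}(s)\le\max_{[0,1]}c$ put us in case (ii), in whose formula all $\mathcal M_6,\dots,\mathcal M_9$ terms and the $c(0),c(1)$ terms disappear and $\mathcal M_1\setminus\{0,1\}=\mathcal M_1$, so $\lim_{s\to\infty}\lambda_1^{\mathcal N}(s)=\min\{\mathfrak L,\ \min\{c(x):x\in\mathcal M_1\}\}$. Applying it with $\hbar_1=\hbar_2=0,\ \ell_1=\ell_2=1$ (the Dirichlet problem \eqref{4.1}): since $\ell_1,\ell_2>0$, case (i-1) would require $\mathcal M=\mathcal M_1\subset\{0,1\}$, which we have just excluded, so again case (ii) applies and gives the same number (note $\mathfrak L$ involves only segments of local maximum interior to $(0,1)$, hence is literally the same quantity in all three problems).

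Finally, the two sides of the sandwich converge to the common value $\min\{\mathfrak L,\ \min\{c(x):x\in\mathcal M_1\}\}$, so the squeeze theorem delivers the statement of Theorem~\ref{th1.2a}. I expect the only delicate step to be the bookkeeping in the second paragraph: verifying that $m'(0)>0$ genuinely forces every boundary‑type contribution in Theorem~\ref{th1.2}(ii) to drop out, so that the Dirichlet and Neumann limits actually coincide. There is no separate analytic difficulty, because periodicity removes all true boundary effects and the heavy lifting is already contained in Theorem~\ref{th1.2}. Should a self-contained argument be preferred, the upper bound $\limsup_{s\to\infty}\lambda_1^{\mathcal P}(s)\le\min\{\mathfrak L,\ \min\{c(x):x\in\mathcal M_1\}\}$ can instead be obtained directly from the periodic variational formula using concentrating test functions — a narrow bump near an isolated maximum, or a principal eigenfunction of $-\varphi''+c\varphi$ on a segment of local maximum extended by a cutoff across a region where $m$ is strictly smaller — and the matching lower bound from a localization/partition argument over neighbourhoods of the points and segments of $\mathcal M$, exactly as in the proof of Theorem~\ref{th1.2}.
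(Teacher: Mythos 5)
Your sandwich inequality $\lambda_1^{\mathcal N}(s)\le\lambda_1^{\mathcal P}(s)\le\lambda_1^{\mathcal D}(s)$ is correct (the admissible classes $H^1_0\subset H^1_{\rm per}\subset H^1$ are nested), and your evaluation of the Dirichlet limit is right. However, the lower half of the sandwich fails because of a sign error in the second paragraph: you claim that $m'(1)>0$ forces $1\notin\mathcal M_1$, but the opposite is true. By the paper's definition a local maximum at $x=1$ only requires $m(1)\ge m(y)$ for $y\in(1-\epsilon_0,1]$, and $m'(1)>0$ makes $m$ strictly increasing on a left neighbourhood of $1$, so $m(1)>m(y)$ there; thus $1$ \emph{is} an isolated point of local maximum of $m$ on $[0,1]$. (By the same token $m'(0)>0$ correctly excludes $0$ from $\mathcal M_1$.)

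Consequently, when you apply Theorem~\ref{th1.2}(ii) to the Neumann problem ($\ell_1=\ell_2=0$), the term $c(1)$ does \emph{not} drop out, and
\[
\lim_{s\to\infty}\lambda_1^{\mathcal N}(s)=\min\bigl\{\mathfrak L,\ \min\{c(x):x\in\mathcal M_1\setminus\{0,1\}\},\ c(1)\bigr\},
\]
whereas the Dirichlet limit is $\min\{\mathfrak L,\ \min\{c(x):x\in\mathcal M_1\setminus\{0,1\}\}\}$. These two numbers coincide only when $c(1)\ge\min\{\mathfrak L,\ \min\{c(x):x\in\mathcal M_1\setminus\{0,1\}\}\}$; if $c(1)$ is small the two sides of the squeeze are genuinely different and the argument does not pin down $\lambda_1^{\mathcal P}$. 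Example~3 (1') and (2') of the introduction illustrate exactly this phenomenon on $[0,1]$: with $m$ strictly increasing, the Neumann limit is $c(1)$ while the Dirichlet limit is $+\infty$, so a sandwich between them cannot in general determine an intermediate eigenvalue. The resolution — and what the paper actually does — is that on the circle the point $0\equiv 1$ is interior, and since $m'(0)>0$ the periodic eigenfunction vanishes near it (the analogue of Lemma~\ref{l2.3}(i) with no boundary), so $\mu(\{0\})=\mu(\{1\})=0$ for the periodic problem; the localization argument of Theorems~\ref{th2.1}--\ref{th2.2} then runs verbatim with $0\equiv 1$ treated as interior and never contributes $c(0)=c(1)$. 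Your final fallback paragraph gestures at precisely this but presents it as optional; in fact it is the essential step, since the Neumann comparison produces the wrong lower bound. The Dirichlet comparison, by contrast, is a legitimate (though unnecessary) way to obtain the upper bound.
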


We now turn our attention to the limiting profile of the principal eigenfunction.
In what follows, for sake of simplicity we only state the result for problem \eqref{1.1}; for the general
problem \eqref{p} and the periodic problem \eqref{1.p1}, the analogous result remains true.

As in \cite{CL1}, we define
 $$
 \lambda^*=\limsup_{s\to\infty}\lambda_1^\mathcal{N}(s),\ \ \  \lambda_*=\liminf_{s\to\infty}\lambda_1^\mathcal{N}(s).
 $$
Recall that $w(s,\cdot)$ is the positive solution of \eqref{1.3} corresponding to the principal eigenvalue
$\lambda_1^\mathcal{N}(s)$ with the normalization $\int_0^1w^2(s,x)dx=1$ for each $s>0$. It is well known that
the sequence $\{w^2(s,\cdot)\}_{s>0}$ is weakly compact in the space $L^1(0,1)$. This implies that there exists a sequence
$\{s_j\}_{j=1}^\infty$ satisfying $s_j\to\infty$ as $j\to\infty$, such that
 \bes
 \lim\limits_{j\to\infty}\int_0^1w^2(s_j,x)\zeta(x)dx=\int_{[0,1]}\zeta(x)\mu(dx),\ \ \forall \zeta\in C([0,1])
 \label{1.7}
 \ees
for a certain probability measure $\mu$. Therefore, from \eqref{1.4} and \eqref{1.7}, it follows that
 \bes
 \lambda_*\geq\lim\limits_{j\to\infty}\int_0^1w^2(s_j,x)c(x)dx=\int_{[0,1]}c(x)\mu(dx).
 \label{1.8}
 \ees

\vskip6pt
We first see from Lemmas \ref{l2.3} and \ref{l2.4} and their proofs that, roughly speaking, in the set $E\subset[0,1]$ where $m$ has no local maximum, $\mu(E)=0$ and
$w(s,\cdot)\to0$ in $E$ as $s\to\infty$. We are now interested in the asymptotic behavior of the principal eigenfunction
$w(s,\cdot)$ in an isolated point or segment of local maximum of $m$ carrying a positive Radon measure of $\mu$. More precisely, we have

\begin{theo}\label{th1.3} Let $w(s,\cdot)$, normalized  by $\int_0^1w^2(s,x)dx=1$ for each $s>0$, be the principal eigenfunction of \eqref{1.1}
corresponding to $\lambda_1^\mathcal{N}(s)$, and $\mu$ be the Radon measure defined through the sequence $\{s_j\}$ in \eqref{1.7}. The following assertions hold.

 \begin{enumerate}
 \item[(1)] Assume that $x_0\in\mathcal{M}_1$ and satisfies
 $$
 m^{(k)}(x_0)=0,\ \ \forall 1\leq k\leq k^*-1,\ \ \mbox{and}\ \ m^{(k^*)}(x_0)\not=0,
 $$
for some integer $k^*\geq2$, and $\mu(\{x_0\})>0$. Then $\lim\limits_{s\to\infty}\lambda_1^\mathcal{N}(s)=c(x_0)$. Moreover, up to a subsequence of $\{s_j\}$, we have

   \begin{enumerate}
 \item[(1-i)] If $x_0\in(0,1)$, then
 $$
 \mbox{$\mu(\{x_0\})^{-{1\over2}}s^{{1\over{2k^*}}}w(s,x_0+s^{-{1\over{k^*}}}y)\to W^*$ locally uniformly in $\bR$,}
 $$
 where $W^*$ with $\int_{-\infty}^\infty (W^*)^2(y)dy=1$ is a positive solution of the linear ODE equation
 \bes
 (W^*)''(y)=((m^{(k^*)}(x_0))^2y^{2(k^*-1)}+m^{(k^*)}(x_0)y^{k^*-2})W^*(y),\ \ \ y\in\bR.
 \nonumber
 \ees

 \item[(1-ii)] If $x_0=0$ or $x_0=1$, then
 $$
 \mbox{$\mu(\{x_0\})^{-{1\over2}}s^{{1\over{2k^*}}}w(s,x_0+s^{-{1\over{k^*}}}y)\to W_*$ locally uniformly in $\bR_*$,}
 $$
 where $\bR_*=(0,\infty)$ if $x_0=0$ and $\bR_*=(-\infty,0)$ if $x_0=1$, and $W_*$ with $\int_{\bR_*}(W_*)^2(y)dy=1$
 is a positive solution of the linear ODE equation
 \bes
 (W_*)''(y)=((m^{(k^*)}(x_0))^2y^{2(k^*-1)}+m^{(k^*)}(x_0)y^{k^*-2})W_*(y),\ \ \ y\in\bR_*.
 \nonumber
 \ees

   \end{enumerate}

 \item[(2)] Assume that $[a,b]\in\mathcal{M}_2$ and $\mu((a,b))>0$. Then $\lim\limits_{s\to\infty}\lambda_1^\mathcal{N}(s)=\lambda_1^{\mathcal{ND}}(a,b)$, and
 $w(s,\cdot)\to\varphi_1$ in $C^1([a,b])$, where $\varphi_1$ is an eigenfunction corresponding to $\lambda_1^{\mathcal{ND}}(a,b)$.

 \item[(3)] Assume that $[a,b]\in\mathcal{M}_3$ and $\mu((a,b))>0$. Then $\lim\limits_{s\to\infty}\lambda_1^\mathcal{N}(s)=\lambda_1^{\mathcal{NN}}(a,b)$, and
 $w(s,\cdot)\to\varphi_1$ in $C^1([a,b])$, where $\varphi_1$ is an eigenfunction corresponding to $\lambda_1^{\mathcal{NN}}(a,b)$.

 \item[(4)] Assume that $[a,b]\in\mathcal{M}_4$ and $\mu((a,b))>0$. Then $\lim\limits_{s\to\infty}\lambda_1^\mathcal{N}(s)=\lambda_1^{\mathcal{DD}}(a,b)$, and
 $w(s,\cdot)\to\varphi_1$ in $C^1([a,b])$, where $\varphi_1$ is an eigenfunction corresponding to $\lambda_1^{\mathcal{DD}}(a,b)$.

 \item[(5)] Assume that $[a,b]\in\mathcal{M}_5$ and $\mu((a,b))>0$. Then $\lim\limits_{s\to\infty}\lambda_1^\mathcal{N}(s)=\lambda_1^{\mathcal{DN}}(a,b)$, and
 $w(s,\cdot)\to\varphi_1$ in $C^1([a,b])$, where $\varphi_1$ is an eigenfunction corresponding to $\lambda_1^{\mathcal{DN}}(a,b)$.

 \item[(6)] Assume that $[0,a^I]\in\mathcal{M}_6$ and $\mu([0,a^I))>0$. Then $\lim\limits_{s\to\infty}\lambda_1^\mathcal{N}(s)=\lambda_1^{\mathcal{ND}}(0,a^I)$, and
 $w(s,\cdot)\to\varphi_1$ in $C^1([0,a^I])$, where $\varphi_1$ is an eigenfunction corresponding to $\lambda_1^{\mathcal{ND}}(0,a^I)$.

 \item[(7)] Assume that $[0,a^D]\in\mathcal{M}_7$ and $\mu([0,a^D))>0$. Then $\lim\limits_{s\to\infty}\lambda_1^\mathcal{N}(s)=\lambda_1^{\mathcal{NN}}(0,a^D)$, and
 $w(s,\cdot)\to\varphi_1$ in $C^1([0,a^D])$, where $\varphi_1$ is an eigenfunction corresponding to $\lambda_1^{\mathcal{NN}}(0,a^D)$.

 \item[(8)] Assume that $[a^I,1]\in\mathcal{M}_8$ and $\mu((a^I,1])>0$. Then $\lim\limits_{s\to\infty}\lambda_1^\mathcal{N}(s)=\lambda_1^{\mathcal{NN}}(a^I,1)$, and
 $w(s,\cdot)\to\varphi_1$ in $C^1([a^I,1])$, where $\varphi_1$ is an eigenfunction corresponding to $\lambda_1^{\mathcal{NN}}(a^I,1)$.

 \item[(9)] Assume that $[a^D,1]\in\mathcal{M}_9$ and $\mu((a^D,1])>0$. Then $\lim\limits_{s\to\infty}\lambda_1^\mathcal{N}(s)=\lambda_1^{\mathcal{DN}}(a^D,1)$, and
 $w(s,\cdot)\to\varphi_1$ in $C^1([a^D,1])$, where $\varphi_1$ is an eigenfunction corresponding to $\lambda_1^{\mathcal{DN}}(a^D,1)$.

 \end{enumerate}

\end{theo}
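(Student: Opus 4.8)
I would split the nine assertions into the single ``isolated point'' case (1), which genuinely requires a rescaling, and the eight ``segment'' cases (2)--(9), which share one mechanism. Throughout I work along the subsequence $\{s_j\}$ fixed in \eqref{1.7} and use, as already available, the bounds $\min c\le\lambda_1^{\mathcal N}(s)\le\max c$, the inequality \eqref{1.8}, and Lemmas \ref{l2.3}--\ref{l2.4} (which say that $\mu$ is supported on $\mathcal M$ and that $w(s,\cdot)\to0$ locally uniformly off $\mathcal M$). Since Theorem \ref{th1.2} already identifies $\lim_{s\to\infty}\lambda_1^{\mathcal N}(s)$ with the indicated minimum, each eigenvalue statement (``$\lim\lambda_1^{\mathcal N}(s)=c(x_0)$'', resp.\ ``$=\lambda_1^{\mathcal{ND}}(a,b)$'', etc.) reduces to showing that when $\mu$ charges the set in question, the corresponding term realizes that minimum: the inequality $\le$ is contained in the formula of Theorem \ref{th1.2}, while $\ge$ follows by a localization argument --- insert the normalized restriction of $w(s,\cdot)$ to a fixed neighbourhood of the set into the appropriate Rayleigh quotient; using that the portion of $\mu$ sitting on the set is concentrated there, and that $w\to0$ on the adjacent slopes, the liminf of this quotient is bounded below by the local eigenvalue. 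In case (1) this hinges on the fact that the localized problem near $x_0$ has limiting principal eigenvalue exactly $c(x_0)$, which is the same explicit fact (``the limit ODE has principal eigenvalue $0$'') used below to identify $W^*$.

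\textbf{Case (1): rescaling near an isolated local maximum.} Set $x=x_0+\sigma s^{-1/k^*}y$ and $W_s(y)=\mu(\{x_0\})^{-1/2}s^{1/(2k^*)}w(s,x_0+\sigma s^{-1/k^*}y)$, where $\sigma$ is a fixed normalizing constant chosen to absorb the factorials in the Taylor expansions $m'(x)=\tfrac{m^{(k^*)}(x_0)}{(k^*-1)!}(x-x_0)^{k^*-1}+o(|x-x_0|^{k^*-1})$ and $m''(x)=\tfrac{m^{(k^*)}(x_0)}{(k^*-2)!}(x-x_0)^{k^*-2}+o(|x-x_0|^{k^*-2})$. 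Substituting into \eqref{1.3} one checks that $W_s$ solves an ODE whose coefficient converges, locally uniformly in $y$, to $(m^{(k^*)}(x_0))^2y^{2(k^*-1)}+m^{(k^*)}(x_0)y^{k^*-2}$, the contribution of $s^{-2/k^*}(c(x)-\lambda_1^{\mathcal N}(s))$ being negligible as $s\to\infty$. The analytic core is to obtain $s$-uniform $C^2_{\mathrm{loc}}$ bounds on $W_s$ and to rule out loss of mass as $|y|\to\infty$, so that the limit captures the full weight $\mu(\{x_0\})$; this comes from an Agmon-type exponential decay estimate, uniform in $s$, valid because the rescaled potential grows like $|y|^{2(k^*-1)}$. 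Extracting a limit $W\ge0$ by Arzel\`a--Ascoli, $W$ solves the stated ODE on $\bR$ (or on $\bR_*$ with the Neumann condition at the origin inherited from \eqref{1.1}, using $m'(x_0)=0$ when $x_0\in\{0,1\}$), is positive by the strong maximum principle and Harnack, and has unit $L^2$-norm. Uniqueness of such a solution follows since the Schr\"odinger operator with confining potential has simple principal eigenvalue, and the substitution $W=e^{-\phi}$ (with $(\phi')^2-\phi''$ equal to the potential) exhibits explicitly that the relevant eigenvalue is $0$, yielding $W^*$ (resp.\ $W_*$). Pointwise convergence of the whole family then follows from uniqueness of the limit along every subsequence.

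\textbf{Cases (2)--(9): behaviour on a plateau.} Fix $[a,b]\in\mathcal M_i$. Since $m$ is constant on $[a,b]$, equation \eqref{1.3} reduces there to $-w''+(c-\lambda_1^{\mathcal N}(s))w=0$, so $w(s,\cdot)$ is already a positive solution of the limiting equation on $[a,b]$; interior ODE estimates plus $\lambda_1^{\mathcal N}(s_j)\to\bar\lambda:=\lim\lambda_1^{\mathcal N}(s)$ give, along a further subsequence, $w(s_j,\cdot)\to\varphi_1$ in $C^1_{\mathrm{loc}}((a,b))$ with $\varphi_1\ge0$ solving $-\varphi_1''+c\varphi_1=\bar\lambda\varphi_1$, and $\varphi_1\not\equiv0$ because $\mu((a,b))>0$, hence $\varphi_1>0$. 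The remaining work is (a) to promote this to convergence in $C^1([a,b])$ and (b) to identify the boundary conditions at $a$ and $b$, both via analysis of $w$ on the adjacent monotone pieces. The dichotomy is: at an endpoint of $[a,b]$ at which $m$ has a local maximum (from that side) the limiting condition is Neumann, and where $m$ has a local minimum (from that side) it is Dirichlet; this is exactly what produces $\mathcal{ND}$ for $\mathcal M_2$, $\mathcal{NN}$ for $\mathcal M_3$, $\mathcal{DD}$ for $\mathcal M_4$, $\mathcal{DN}$ for $\mathcal M_5$, and the mixed $\mathcal M_6$--$\mathcal M_9$ (at the physical endpoint $0$ or $1$ the relation $w'-sm'w=0$ with $m'=0$ on the flat piece degenerates to a plain Neumann condition). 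For the Neumann (downhill) side one shows the original eigenfunction $\varphi=e^{-sm}w$ is asymptotically flat across the adjacent decreasing slope --- ordered sub/supersolutions force $\varphi$ to be constant there up to $o(1)$ --- so $\varphi'$, hence $w'=e^{sm}(sm'\varphi+\varphi')$, vanishes in the limit at that endpoint (where $m'=0$). For the Dirichlet (uphill) side, where $m$ increases away from $[a,b]$, the potential $s^2(m')^2$ blows up on the slope, forcing $w\to0$ there; a boundary-layer analysis at scale $s^{-1/r}$ ($r$ the vanishing order of $m'$ at the endpoint) shows that no nontrivial bounded profile can both decay into the layer and match a bounded plateau solution, so $w$ must vanish at the endpoint in the limit, and a barrier argument near the endpoint then upgrades the convergence to $C^1([a,b])$. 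Once the boundary conditions are known, $\bar\lambda=\lambda_1^{ij}(a,b)$ is forced, since $\varphi_1>0$ is an eigenfunction and positive eigenfunctions occur only for the principal eigenvalue; the limit $\varphi_1$ is therefore an eigenfunction associated with $\lambda_1^{ij}(a,b)$, as claimed.

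\textbf{Main obstacle.} I expect the boundary-condition determination in cases (2)--(9) --- especially the Dirichlet case, where one must exclude via the boundary-layer matching the possibility that $w$ persists with a nonzero value at the uphill endpoint --- to be the delicate part. The rescaling of case (1), the interior $C^1$ convergence on plateaus, and the eigenvalue identifications are comparatively routine once the a priori information from Lemmas \ref{l2.3}--\ref{l2.4} and Theorem \ref{th1.2} is available.
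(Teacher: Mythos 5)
Your case (1) argument is, modulo the cosmetic normalizing constant $\sigma$, the paper's: the paper rescales $W(x_0,s;y)=s^{1/(2k^*)}w(s,x_0+s^{-1/k^*}y)$, proves a global $L^\infty$ bound $\|w(s,\cdot)\|_{L^\infty}\leq Ms^{1/2}$ (its Lemma \ref{l3.1}), establishes exponential decay away from $x_0$ (Lemmas \ref{l3.2}--\ref{l3.3}, which play the role of your ``Agmon-type'' estimate and prevent loss of mass), uses the Harnack inequality to make $\overline M(x_0,s,R)$ bounded uniformly in $s$, and then passes to a subsequential limit $W^*$ solving the stated ODE with $\int(W^*)^2=\mu(\{x_0\})$. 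Your extra observations (uniqueness of the limit profile, convergence of the full family) are unneeded because the theorem is stated only up to a subsequence.

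For cases (2)--(9) your plateau analysis (interior $C^1$-convergence from the ODE $-w''+(c-\lambda_1^{\mathcal N})w=0$, positivity from $\mu((a,b))>0$, and identification of $\bar\lambda$ via the positive-eigenfunction characterization) is sound and close to what the paper does in the proofs of Theorems \ref{th2.1}--\ref{th2.2}. The Neumann end-condition is obtained in the paper ``for free'': at such an endpoint $w$ is unconstrained, and the equality case of the variational inequality (cf.\ \eqref{2.25a}--\eqref{2.25c} together with \eqref{2.17}) forces $w_*$ to be the unconstrained Rayleigh minimizer, hence to satisfy the natural boundary condition; your explicit sub/supersolution argument for $\varphi=e^{-sm}w$ being asymptotically flat across the slope is a different, more elaborate route to the same place and I would not trust it as written (the comparison argument is only sketched and the nonzero zeroth-order term $c-\lambda$ is ignored).

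The genuine gap is in your treatment of the Dirichlet end-condition. You propose ``a boundary-layer analysis at scale $s^{-1/r}$ ($r$ the vanishing order of $m'$ at the endpoint),'' but the standing hypotheses give no finite vanishing order: assumption \eqref{a} only says $m'$ changes sign finitely often, and $m$ may be degenerate to arbitrary (even infinite) order at a plateau endpoint, or $m'$ may even vanish on a set of positive measure near it. There is no distinguished scale $s^{-1/r}$ in general, and the matching argument you describe does not get off the ground. The paper avoids this entirely: it shows $w_*(a_2)=0$ (and similarly at other uphill endpoints) by contradiction -- if $w_*(a_2)>0$, then $w(s_{j},a_2)\geq\tfrac12 w_*(a_2)$ for large $j$, while Lemma \ref{l2.3} supplies nearby points $a_2-\rho_k$ with $\rho_k\to0$ where $w$ is small (because $\{m'<0\}\cap(a_2-\rho,a_2)\neq\emptyset$ for every $\rho>0$), and then the monotone rearrangement/Sard's-lemma energy argument from the proof of Lemma \ref{l2.4} (which bounds $|w(a_2)-w(a_2-\rho_k)|^2/\rho_k$ by a constant, via the $H^1$ energy bound $\int|w'-swm'|^2\le\mathrm{const}$) yields a contradiction for $\rho_k$ small. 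This argument is entirely insensitive to the order of degeneracy of $m'$ and is what your proof is missing; replacing your boundary-layer matching by this energy/rearrangement comparison would close the gap.
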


We remark that if $k^*=2$, Theorem \ref{th1.3} is reduced to Theorem 2 of \cite{CL1} in the one dimension case,
and the unique positive solution $W^*$ and $W_*$ can be explicitly given (see Theorem 2 of \cite{CL1} for the details).

To obtain the results stated above, we mainly follow the approach of \cite{CL1}.
However, in doing so, several new ideas and techniques will have to be introduced in order to overcome a number of highly nontrivial
difficulties caused by the degeneracy of $m$.

Roughly speaking, our strategy consists of two main steps.
As a first step, we establish $\limsup\limits_{s\to\infty}\lambda_1(s)$ by constructing suitable testing functions
(due to the variational characterization of $\lambda_1(s)$). In the second step of yielding $\liminf\limits_{s\to\infty}\lambda_1(s)$, we first need to
establish a refined description of the support of the probability measure $\mu$, showing that
$\mu([0,1]\setminus\mathcal{M})=0$. Then, combined with the variational characterization of $\lambda_1(s)$ again,
among other ingredients, we can derive $\liminf\limits_{s\to\infty}\lambda_1(s)$, which coincides with $\limsup\limits_{s\to\infty}\lambda_1(s)$.
The asymptotic profile of the eigenfunction $w(s,x)$ is determined by using some local analysis at an isolated point of local maximum of $m$,
and by elliptic regularity theory in a segment of local maximum of $m$.

In \cite{CL1}, under the assumption of Theorem \ref{th1.1}, for problem \eqref{1.1}, Chen and Lou asked if the support of the probability measure $\mu$ is precisely given
by the set $\mathcal{M}$ of points of local maximum. Theorems \ref{th1.2}, \ref{th1.3} here and their proofs show that this is not the case in general.
As a matter of fact, the support of $\mu$ consists of the set
through which the limit $\lim\limits_{s\to\infty}\lambda_1^\mathcal{N}(s)$ is attained, and conversely, the limit is attained on the support of $\mu$ in the same sense
as explained in Remark \ref{r-a}. Such comments also apply to the general problem \eqref{p} and the periodic problem \eqref{1.p1}.

To illustrate the main results obtained in this paper, we shall look at the following three typical examples.

Example 1: $m$ is strictly decreasing on $[0,x_1]\cup[x_3,x_4]$, strictly increasing on $[x_1,x_2]\cup[x_4,1]$ and constant on $[x_2,x_3]$ (See Figure 1).
Let $x_2,\,x_3$ shrink to one point $x_0$ so that
$m$ is strictly decreasing on $[0,x_1]\cup[x_0,x_4]$ and strictly increasing on $[x_1,x_0]\cup[x_4,1]$ (See Figure 2).

\begin{figure}[htbp]\label{figure1}
\centering {\includegraphics[height=1.35in]{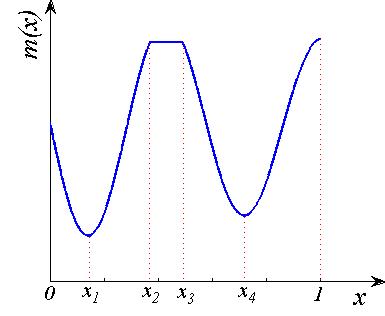}}\ \ \ \ \ \ \  \ \ \ \ \ \ \ \ \ \ \ \  \ \ \
{\includegraphics[height=1.35in]{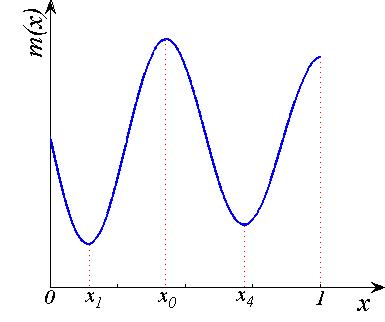}}

{Figure 1 \ \ \ \ \ \ \ \ \ \ \ \ \ \ \ \ \ \ \ \ \ \ \ \ \  \ \  \ \  \ \ \  \ \ \ \ \ \ \ \ \ \ Figure 2}
\end{figure}

Denote by $\lambda_1^{\mathcal{RR}}(s)$ ($\lambda_1^{\mathcal{NR}}(s)$; $\lambda_1^{\mathcal{RN}}(s)$, respectively)
the principal eigenvalue of \eqref{p} with $\ell_1,\,\ell_2>0$ ($\ell_1=0,\,\ell_2>0$; $\ell_1>0,\,\ell_2=0$, respectively), and let $\mu$ be the
probability measure corresponding to the normalized principal eigenfunction $w(s,\cdot)$ of the associated eigenvalue
problem as defined for the Neumann problem \eqref{1.1} and \eqref{1.3}.

Assume that $m$ is given as in Figure 1, we have

\begin{itemize}

\item[(1)] $\lim\limits_{s\to\infty}\lambda_1^{\mathcal{N}}(s)=\min\{c(0),\ c(1),\ \lambda_1^\mathcal{NN}(x_2,x_3)\}$,
$\mu((0,x_2]\cup[x_3,1))=0$ and $\mu(\{0,\,1\}\cup(x_2,x_3))=1$.

\item[(2)]

$\lim\limits_{s\to\infty} \lambda_1^{\mathcal{RR}}(s)=\lambda_1^\mathcal{NN}(x_2,x_3)$, and
$\mu([0,x_2]\cup[x_3,1])=0$ and $\mu((x_2,x_3))=1$.

\item[(3)]
$\lim\limits_{s\to\infty}\lambda_1^{\mathcal{NR}}(s)=\min\{c(0),\ \lambda_1^\mathcal{NN}(x_2,x_3)\}$,
$\mu((0,x_2]\cup[x_3,1])=0$ and $\mu(\{0\}\cup(x_2,x_3))=1$.

\item[(4)]
$\lim\limits_{s\to\infty}\lambda_1^{\mathcal{RN}}(s)=\min\{c(1),\ \lambda_1^\mathcal{NN}(x_2,x_3)\}$,
$\mu([0,x_2]\cup[x_3,1))=0$ and $\mu(\{1\}\cup(x_2,x_3))=1$.

\end{itemize}

When $x_2$ and $x_3$ shrink to one point $x_0$ as shown in Figure 2, we have
 \begin{itemize}

\item[(1')] $\lim\limits_{s\to\infty}\lambda_1^{\mathcal{N}}(s)=\min\{c(0),\ c(1),\ c(x_0)\}$,
$\mu((0,x_0)\cup(x_0,1))=0$ and $\mu(\{0,\,1,\,x_0\})=1$.

\item[(2')]

$\lim\limits_{s\to\infty} \lambda_1^{\mathcal{RR}}(s)=c(x_0)$, $\mu([0,x_0)\cup(x_0,1])=0$ and $\mu(\{x_0\})=1$.

\item[(3')]
$\lim\limits_{s\to\infty}\lambda_1^{\mathcal{NR}}(s)=\min\{c(0),\ c(x_0)\}$,
$\mu((0,x_0)\cup(x_0,1])=0$ and $\mu(\{0,\,x_0\})=1$.

\item[(4')]
$\lim\limits_{s\to\infty}\lambda_1^{\mathcal{RN}}(s)=\min\{c(1),\ c(x_0)\}$,
$\mu([0,x_0]\cup(x_0,1))=0$ and $\mu(\{x_0,\,1\})=1$.

\end{itemize}

\vskip6pt
We note that $\lambda_1^\mathcal{NN}(x_2,x_3)$ converges to $c(x_0)$ as $x_2,\,x_3$ shrinks to the point $x_0$.
Thus, our result coincides with Theorem \ref{th1.1} obtained by Chen and Lou \cite{CL1}; however, we do not require
non-degeneracy of $m$ at $x_0$.

\vskip6pt Example 2: $m$ is strictly decreasing on $[0,x_1]$, strictly increasing on $[x_2,1]$ and constant on $[x_1,x_2]$ (Figure 3).
Let $x_1,\,x_2$ shrink to one point $x_0$ so that
$m$ is strictly decreasing on $[0,x_0]$ and strictly increasing on $[x_0,1]$ (Figure 4). Assume that $m$ is given as in Figure 3, we have

\begin{figure}[htbp]\label{figure2}
\centering {\includegraphics[height=1.35in]{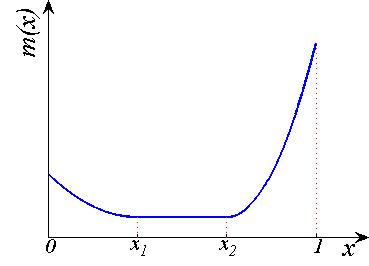}}\ \ \ \ \ \ \  \ \ \ \ \ \ \ \ \ \ \ \  \ \ \
{\includegraphics[height=1.35in]{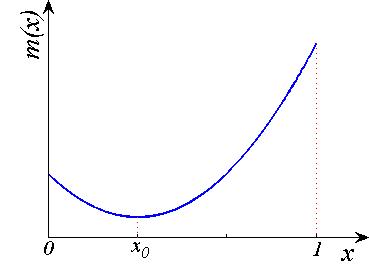}}

{Figure 3 \ \ \ \ \ \ \ \ \ \ \ \ \ \ \ \ \ \ \ \ \ \ \ \ \  \ \  \ \  \ \ \  \ \ \ \ \ \ \ \ \ \ \ \ \ \ \ \ Figure 4}
\end{figure}

\begin{itemize}

\item[(1)] $\lim\limits_{s\to\infty}\lambda_1^{\mathcal{N}}(s)=\min\{c(0),\ c(1),\ \lambda_1^\mathcal{DD}(x_1,x_2)\}$,
$\mu((0,x_1]\cup[x_2,1))=0$ and $\mu(\{0,\,1\}\cup(x_1,x_2))=1$.

\item[(2)]

$\lim\limits_{s\to\infty} \lambda_1^{\mathcal{RR}}(s)=\lambda_1^\mathcal{DD}(x_1,x_2)$,
$\mu([0,x_1]\cup[x_2,1])=0$ and $\mu((x_1,x_2))=1$.

\item[(3)]
$\lim\limits_{s\to\infty}\lambda_1^{\mathcal{NR}}(s)=\min\{c(0),\ \lambda_1^\mathcal{DD}(x_1,x_2)\}$,
$\mu((0,x_1]\cup[x_2,1])=0$ and $\mu(\{0\}\cup(x_1,x_2))=1$.

\item[(4)]
$\lim\limits_{s\to\infty}\lambda_1^{\mathcal{RN}}(s)=\min\{c(1),\ \lambda_1^\mathcal{DD}(x_1,x_2)\}$,
$\mu([0,x_1]\cup[x_2,1))=0$ and $\mu(\{1\}\cup(x_1,x_2))=1$.

\end{itemize}

When $x_1$ and $x_2$ shrink to one point $x_0$ as shown in Figure 4, we have
 \begin{itemize}

\item[(1')] $\lim\limits_{s\to\infty}\lambda_1^{\mathcal{N}}(s)=\min\{c(0),\ c(1)\}$,
$\mu((0,1))=0$ and $\mu(\{0,\,1\})=1$.

\item[(2')]

$\lim\limits_{s\to\infty} \lambda_1^{\mathcal{RR}}(s)=\infty$.

\item[(3')]
$\lim\limits_{s\to\infty}\lambda_1^{\mathcal{NR}}(s)=c(0)$,
$\mu((0,1])=0$ and $\mu(\{0\})=1$.

\item[(4')]
$\lim\limits_{s\to\infty}\lambda_1^{\mathcal{RN}}(s)=c(1)$,
$\mu([0,1))=0$ and $\mu(\{1\})=1$.

\end{itemize}

\vskip6pt Example 3: $m$ is strictly increasing on $[0,x_1]\cup[x_2,1]$ and constant on $[x_1,x_2]$ (Figure 5).
Let $x_1,\,x_2$ shrink to one point $x_0$ so that $m$ is strictly increasing on $[0,1]$ (Figure 6). Assume that $m$ is given as in Figure 5, we have

\begin{figure}[htbp]\label{figure3}
\centering {\includegraphics[height=1.35in]{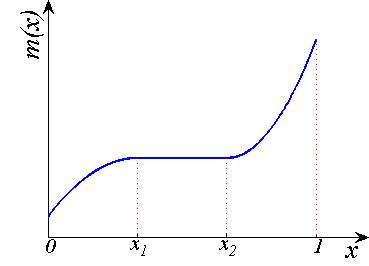}}\ \ \ \ \ \ \  \ \ \ \ \ \ \ \ \ \ \ \  \ \ \
{\includegraphics[height=1.35in]{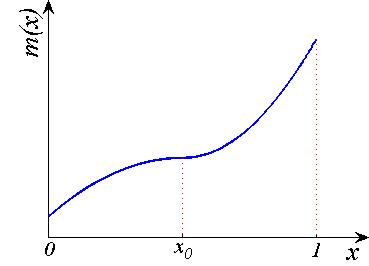}}

{Figure 5 \ \ \ \ \ \ \ \ \ \ \ \ \ \ \ \ \ \ \ \ \ \ \ \ \  \ \  \ \ \ \ \ \ \ \ \ \ \ \  \ \ \ \ \ \ \ \ \ \ Figure 6}
\end{figure}

\begin{itemize}

\item[(1)] $\lim\limits_{s\to\infty}\lambda_1^{\mathcal{N}}(s)=\min\{c(1),\ \lambda_1^\mathcal{ND}(x_1,x_2)\}$,
$\mu([0,x_1]\cup[x_2,1))=0$ and $\mu(\{1\}\cup(x_1,x_2))=1$.

\item[(2)]

$\lim\limits_{s\to\infty} \lambda_1^{\mathcal{RR}}(s)=\lambda_1^\mathcal{ND}(x_1,x_2)$, and
$\mu([0,x_1]\cup[x_2,1])=0$ and $\mu((x_1,x_2))=1$.

\item[(3)]
$\lim\limits_{s\to\infty}\lambda_1^{\mathcal{NR}}(s)=\lambda_1^\mathcal{ND}(x_1,x_2)$,
$\mu([0,x_1]\cup[x_2,1])=0$ and $\mu((x_1,x_2))=1$.

\item[(4)]
$\lim\limits_{s\to\infty}\lambda_1^{\mathcal{RN}}(s)=\min\{c(1),\ \lambda_1^\mathcal{ND}(x_1,x_2)\}$,
$\mu([0,x_1]\cup[x_2,1))=0$ and $\mu(\{1\}\cup(x_1,x_2))=1$.

\end{itemize}

When $x_1$ and $x_2$ shrink to one point $x_0$ as shown in Figure 6, we have
 \begin{itemize}

\item[(1')] $\lim\limits_{s\to\infty}\lambda_1^{\mathcal{N}}(s)=c(1)$,
$\mu([0,1))=0$ and $\mu(\{1\})=1$.

\item[(2')]

$\lim\limits_{s\to\infty} \lambda_1^{\mathcal{RR}}(s)=\infty$.

\item[(3')]
$\lim\limits_{s\to\infty}\lambda_1^{\mathcal{NR}}(s)=\infty$,
$\mu([0,1))=0$ and $\mu(\{1\})=1$.

\item[(4')]
$\lim\limits_{s\to\infty}\lambda_1^{\mathcal{RN}}(s)=c(1)$,
$\mu([0,1))=0$ and $\mu(\{1\})=1$.

\end{itemize}

\vskip6pt
We also note that, in Examples 2 and 3, $\lambda_1^\mathcal{DD}(x_1,x_2)\to\infty$ and $\lambda_1^\mathcal{ND}(x_1,x_2)\to\infty$
as $x_1,\,x_2$ shrinks to the point $x_0$.
Again, our above results are consistent with Theorem \ref{th1.1} due to Chen and Lou \cite{CL1}; though $m'(x_0)=0$, we do not require
non-degeneracy of $m$ at $x_0$ (that is, $m''(x_0)$ may vanish). Furthermore, in each of the above three examples, the support of $\mu$ consists of the set
through which the limiting value of the principal eigenvalue is attained,
and conversely, the limit is attained on the support of $\mu$ in the same sense
as interpreted in Remark \ref{r-a}.

To end the introduction, we shall use two simple examples to hint
the interesting impact of {\it oscillating behavior} of $m$ on the principal eigenvalue.

\vskip4pt
Example A: Assume that the set of points of local maximum of
$m$, denoted by $\mathcal{M}_1^*$, contains only isolated points, and $x_0$ is the only accumulation point of $\mathcal{M}_1^*$ (that is,
there is a sequence $\{x_i^I\}$ with $x_i^I\in\mathcal{M}_1^*,\,\forall i\geq1$
such that $x^I_i\to x_0\in(0,1)$ as $i\to\infty$) and $x_0$ is also a point of local minimum of $c$ (in the usual sense).
Then, one can appeal to the analysis of this paper to show
 $$
 \lim\limits_{s\to\infty}\lambda_1(s)=\min\{c(x_0),\,\inf\{c(x),\ x\in\mathcal{M}_1^*\}\}.
 $$
Note that $x_0$ may not be an isolated point of local maximum of such a given $m$. This implies that
such an oscillating behavior of $m$ can affect the limiting profile of the principal eigenvalue $\lambda_1(s)$.

\vskip4pt
Example B: Assume that the set of points of local maximum of $m$ is given by \eqref{max} in which we
now take $h_2=\infty,\,h_5=\infty$ so that $a_i^D<x_0<b_i^I$ for all $i\geq1$, $a_i^I,\,b_i^I\to x_0\in(0,1)$ and
$a_i^D,\,b_i^D\to x_0$ as $i\to\infty$. Then the limit $ \lim\limits_{s\to\infty}\lambda_1(s)$ is given as in Theorem \ref{th1.2}
with $h_2,\,h_5$ replaced by $\infty$ and so $\lim\limits_{s\to\infty}\lambda_1(s)$ is independent of such $x_0$. In other words,
such an oscillating behavior of $m$ has no qualitative effect on the limiting profile of the principal eigenvalue $\lambda_1(s)$.

Thus, it would be interesting to discuss the asymptotic behavior of the principal eigenvalue
when the advection function $m$ allows general oscillation.

The outline of this paper is as follows. Section 2 is devoted to the analysis of the limiting behavior of the principal eigenvalue
and Theorems \ref{th1.2} and \ref{th1.2a} are proved, while section 3 concerns the asymptotic profile of the principal eigenfunction in which
Theorem \ref{th1.3} is verified. Throughout the paper, we use $|E|$ to stand for the Lebesgue measure of a given set $E\subset\bR$.

\section{The principal eigenvalue:\ Proof of Theorems \ref{th1.2} and \ref{th1.2a}}

This section aims to analyze the asymptotic behavior of the principal eigenvalue
of \eqref{p} and \eqref{1.p1} as $s\to\infty$. We first consider the Neumann problem \eqref{1.1}, and
then investigate the general problem \eqref{p} and the periodic problem \eqref{1.p1}.

\subsection{The principal eigenvalue problem \eqref{1.1}} First of all, we estimate the upper bounds of $\lambda_1^\mathcal{N}(s)$, and state that

\begin{lem}\label{l2.1} Let $\lambda^*=\limsup\limits_{s\to\infty}\lambda_1^\mathcal{N}(s)$. The following assertions hold.
 \bes
 \left.\begin{array}{ll}
 \medskip
 \mbox{\rm{(i)}\ \  If $x^{I}\in\mathcal{M}_1$, then $\lambda^*\leq c(x^{I});$}\  & \ \mbox{\rm{(ii)}\ \  If $[a^I,b^I]\in{M}_2$, then $\lambda^*\leq\lambda_1^{\mathcal{ND}}(a^I,b^I);$}\\
 \displaystyle
 \mbox{\rm{(iii)}\ \  If $[a^I,b^D]\in\mathcal{M}_3$, then $\lambda^*\leq\lambda_1^{\mathcal{NN}}(a^I,b^D);$}\  & \ \mbox{\rm{(iv)}\ \  If $[a^D,b^I]\in\mathcal{M}_4$, then $\lambda^*\leq\lambda_1^{\mathcal{DD}}(a^D,b^I);$}\\
 \mbox{\rm{(v)}\ \  If $[a^D,b^D]\in\mathcal{M}_5$, then $\lambda^*\leq\lambda_1^{\mathcal{DN}}(a^D,b^D);$}\  & \ \mbox{\rm{(vi)}\ \  If $[0,a^I]\in\mathcal{M}_6$, then $\lambda^*\leq\lambda_1^{\mathcal{ND}}(0,a^I);$}\\
 \mbox{\rm{(vii)}\ \  If $[0,a^D]\in\mathcal{M}_7$, then $\lambda^*\leq\lambda_1^{\mathcal{NN}}(0,a^D);$}\ &\  \mbox{\rm{(viii)}\ \  If $[a^I,1]\in\mathcal{M}_8$, then $\lambda^*\leq\lambda_1^{\mathcal{NN}}(a^I,1);$}\\
 \mbox{\rm{(ix)}\ \  If $[a^D,1]\in\mathcal{M}_9$, then $\lambda^*\leq\lambda_1^{\mathcal{DN}}(a^D,1).$}
 \end{array}
 \right.
 \nonumber
 \ees

\end{lem}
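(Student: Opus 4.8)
The plan is to prove each of the nine upper bounds in Lemma \ref{l2.1} by the same mechanism: exhibit, for each claimed bound, a one‑parameter family of admissible test functions in the Neumann variational characterization \eqref{1.2}, and let $s\to\infty$. Recall that
\be
\lambda_1^\mathcal{N}(s)=\min_{\int_0^1 e^{2sm}\varphi^2\,dx=1}\int_0^1 e^{2sm}\big[(\varphi')^2+c\varphi^2\big]\,dx,
\nonumber
\ee
so for \emph{any} $\psi\in H^1(0,1)$ with $\int_0^1 e^{2sm}\psi^2\,dx>0$ we have
\be
\lambda_1^\mathcal{N}(s)\le \frac{\int_0^1 e^{2sm}\big[(\psi')^2+c\psi^2\big]\,dx}{\int_0^1 e^{2sm}\psi^2\,dx}.
\nonumber
\ee
The whole point is that $e^{2sm}$ concentrates, as $s\to\infty$, near the set $\mathcal M$ where $m$ attains its maximum; by choosing $\psi$ supported on (a neighbourhood of) the relevant local‑maximum component and equal there to the corresponding principal eigenfunction, the Rayleigh quotient above converges to the corresponding eigenvalue.

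I will carry out the cases in order. Consider first (ii): let $[a^I,b^I]\in\mathcal M_2$, let $\varphi_1>0$ be the principal eigenfunction of $-\varphi''+c\varphi=\lambda\varphi$ on $(a^I,b^I)$ with Neumann condition at $a^I$ and Dirichlet at $b^I$, normalized somehow, and extend $\varphi_1$ by zero to $[b^I,1]$; on the left, since $m$ is non‑decreasing on $[a^I-\epsilon_0,a^I]$ and strictly increases arbitrarily close to $a^I$, pick $\delta>0$ small and define $\psi$ on $[a^I-\delta,a^I]$ to be a smooth interpolation (e.g. linear) from $0$ at $a^I-\delta$ to $\varphi_1(a^I)$ at $a^I$, and $\psi\equiv 0$ on $[0,a^I-\delta]$. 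Then in the numerator, the contribution of the interpolation region is $O\!\left(\int_{a^I-\delta}^{a^I} e^{2sm}\,dx\right)$, which is \emph{exponentially small compared to} $\int_{a^I-\delta'}^{a^I} e^{2sm}\,dx$ for any $0<\delta'<\delta$ — more precisely, since $m(x)<m(a^I)$ strictly on $[a^I-\delta,a^I)$ and $m$ strictly increases on a sequence approaching $a^I$, one shows
\be
\frac{\int_{a^I-\delta}^{a^I} e^{2sm}\,dx}{\int_{a^I}^{b^I} e^{2sm}\varphi_1^2\,dx}\longrightarrow 0\qquad(s\to\infty),
\nonumber
\ee
because the denominator is comparable to $e^{2s\,m(a^I)}$ (the value of $m$ on the whole segment $[a^I,b^I]$, where $\varphi_1$ is bounded below on compact subsets away from $b^I$), while the numerator is $o(e^{2s\,m(a^I)})$. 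Hence the Rayleigh quotient for $\psi$ tends to
\be
\frac{\int_{a^I}^{b^I}\big[(\varphi_1')^2+c\varphi_1^2\big]\,dx}{\int_{a^I}^{b^I}\varphi_1^2\,dx}=\lambda_1^{\mathcal{ND}}(a^I,b^I),
\nonumber
\ee
using dominated convergence on $(a^I,b^I)$ after dividing numerator and denominator by $e^{2s\,m(a^I)}$, and that the boundary interpolation terms are negligible. Therefore $\lambda^*=\limsup_s\lambda_1^\mathcal{N}(s)\le\lambda_1^{\mathcal{ND}}(a^I,b^I)$, which is (ii).

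The remaining cases are entirely analogous, only the endpoint treatment changes according to the decorations $I,D$ and $\mathcal N,\mathcal D$. When an endpoint is decorated $I$ (so $m$ increases through it from outside), one attaches a vanishing interpolation wedge outside the segment as above and gets the \emph{Neumann} eigenvalue at that endpoint, since no Dirichlet constraint on $\varphi_1$ is needed there; when an endpoint is decorated $D$ (so $m$ decreases away from it), $e^{2sm}$ already decays outside, but to get a valid test function one instead takes $\varphi_1$ to satisfy the \emph{Dirichlet} condition at that endpoint and extends by zero, so no wedge is needed — this explains the pattern $I\leftrightarrow\mathcal N$, $D\leftrightarrow\mathcal D$ in the statement. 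For (i), $x^I\in\mathcal M_1$ is an isolated maximum: take $\psi$ a small tent centered at $x^I$ of width $2\delta$; then the Rayleigh quotient is $\le c(x^I)+o(1)+O(\delta^{-2})\cdot(\text{ratio of }e^{2sm}\text{ mass on the slopes to that near }x^I)$, and the concentration of $e^{2sm}$ at the strict maximum $x^I$ kills the $O(\delta^{-2})$ term as $s\to\infty$, then let $\delta\to0$; if $x^I\in\{0,1\}$ one uses a half‑tent. Cases (iii)–(ix) combine these two endpoint behaviours, with (vi)–(ix) using the actual boundary points $0,1$ where the Neumann condition $\varphi'(0)=\varphi'(1)=0$ of \eqref{1.1} is automatically respected by the flat extension, hence the $\mathcal N$ at $0$ in (vi),(vii) and at $1$ in (viii),(ix). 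The main obstacle, and the only place genuine care is required, is the quantitative concentration estimate — showing that the $e^{2sm}$‑mass contributed by the interpolation wedges (and by the slopes near an isolated maximum) is $o\!\big(\int e^{2sm}\varphi_1^2\big)$ as $s\to\infty$; this rests precisely on the structural hypotheses built into the definition of "segment of local maximum" (namely $m(x)<\max m$ strictly just outside, with $m'\ne0$ on an approaching sequence), and on assumption \eqref{a} guaranteeing finitely many sign changes so that such neighbourhoods exist. Once this estimate is in hand for one representative case, it transfers verbatim to the others.
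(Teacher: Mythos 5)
Your overall strategy — exhibit a test function in the Rayleigh quotient for $\lambda_1^\mathcal{N}(s)$ that is the relevant limiting eigenfunction on the local‑maximum component, extended by a vanishing ramp on the side where $e^{2sm}$ decays and by zero on the side where it grows — is exactly the paper's approach (the paper's proof of (iii) uses a plateau‑plus‑ramp and a quantitative monotonicity bound $e^{2s[m(a^I-\beta_i)-m(a^I-\alpha_i)]}$, then sends $s\to\infty$ first and $i\to\infty$ second, whereas your dominated‑convergence shortcut for fixed ramp width $\delta$ is a slightly cleaner variant of the same idea). However, there are two genuine errors.

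First, your rule ``$I\leftrightarrow\mathcal N$, $D\leftrightarrow\mathcal D$'' is wrong at the right endpoint, and this is not cosmetic. The pattern actually asserted in the lemma is $a^I\to\mathcal N$, $a^D\to\mathcal D$, $b^I\to\mathcal D$, $b^D\to\mathcal N$: the decoration $I$ or $D$ records the monotonicity of $m$ in the outward direction, but the \emph{sign} of ``$e^{2sm}$ decays outside'' flips between the two ends. At $b^I$, $m$ is non‑decreasing on $[b^I,b^I+\epsilon_0]$ and strictly increases arbitrarily close to $b^I$, so $e^{2sm}$ \emph{grows} to the right; a ramp there contributes at least of order $e^{2sm(b^I)}/\delta$ to the numerator against a denominator of order $e^{2sm(b^I)}$, so the quotient does not converge (indeed it blows up), and one must instead impose $\varphi_1(b^I)=0$ and extend by zero, giving Dirichlet. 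Symmetrically at $b^D$ the weight decays outward, a ramp works, and the boundary condition is Neumann. Your stated rule therefore yields the wrong limit problems in cases (ii), (iii), (iv), (v), (vi), (viii) — in (iv), for instance, it would claim $\lambda^*\le\lambda_1^{\mathcal{DN}}(a^D,b^I)$, which is strictly smaller than the correct bound $\lambda_1^{\mathcal{DD}}(a^D,b^I)$ and is not provable by this construction.

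Second, the tent function in case (i) does not work. For a tent $\psi(x)=1-|x-x^I|/\delta$ centered at $x^I$, the slope $(\psi')^2=\delta^{-2}$ is supported on the \emph{entire} support of $\psi$, including a neighbourhood of the peak where $\psi^2\approx1$. Since $e^{2sm}$ concentrates at $x^I$ where $\psi(x^I)=1$, one has
\be
\frac{\int_{x^I-\delta}^{x^I+\delta} e^{2sm}(\psi')^2\,dx}{\int_{x^I-\delta}^{x^I+\delta} e^{2sm}\psi^2\,dx}
=\frac{1}{\delta^2}\cdot\frac{\int e^{2s(m-m(x^I))}\,dx}{\int e^{2s(m-m(x^I))}\psi^2\,dx}\ \longrightarrow\ \frac{1}{\delta^2}\qquad(s\to\infty),
\nonumber
\ee
not $0$; concentration does not kill the $O(\delta^{-2})$ term, and sending $\delta\to0$ afterward only makes it worse. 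One must use a plateau‑and‑ramp ($\psi\equiv1$ on $[x^I-\delta',x^I+\delta']$ with the ramps on $[x^I-\delta,x^I-\delta']\cup[x^I+\delta',x^I+\delta]$), so that the gradient is supported where $m\le m_1<m(x^I)$ strictly; then the above ratio is $\lesssim \delta^{-2}e^{2s(m_1-m(x^I)+\eta/2)}\to0$. This is precisely what the paper (citing Lemma 2.2 of \cite{CL1}) does, and is the analogue in your setting of the constant plateau your cases (ii)–(ix) implicitly get for free because the eigenfunction $\varphi_1$ is bounded below away from its Dirichlet endpoints.
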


\begin{proof} The assertion (i) follows directly from Lemma 2.2 of \cite{CL1}. In  the sequel,
we are going to verify the assertion (iii) by modifying the argument of Lemma 2.2 of \cite{CL1}.
Without loss of generality, we may assume that $c(x)\geq0$ on $[0,1]$; otherwise, we replace
$(\lambda_1^\mathcal{N}(s),\,c(x))$ in \eqref{1.1} by $(\lambda_1^\mathcal{N}(s)+\max_{[0,1]}|c(x)|,\,c(x)+\max_{[0,1]}|c(x)|)$.

According to our definition for $[a^I,b^D]$, there exists a small $\epsilon_0>0$ such that $m$ is constant on $[a^I,b^D]$,
non-decreasing on $[a^I-\epsilon_0,a^I]$, and non-increasing on $[b^{{D}},b^{{D}}+\epsilon_0]$. Furthermore,
for any small $\epsilon>0$, $\{x\in[0,1]:\ m'(x)>0\}\cap(a^{{I}}-\epsilon,a^{{I}})\not=\emptyset$ and
$\{x\in[0,1]:\ m'(x)<0\}\cap(b^{{D}},b^{{D}}+\epsilon)\not=\emptyset$.
We take $\varphi_0$ to be an principal eigenfunction corresponding to $\lambda_1^{\mathcal{NN}}(a^I,b^D)$.
Then, for any given three positive constant sequences $\{\alpha_i\}_{i=1}^\infty,\,\{\beta_i\}_{i=1}^\infty,\,\{\gamma_i\}_{i=1}^\infty$
satisfying $0<\alpha_i<\beta_i<\gamma_i<\epsilon_0$ for each $i\geq1$, and $\gamma_i\to0$ (and so $\alpha_i,\,\beta_i\to0$) as $i\to\infty$,
we may assume, without loss of generality, that $m(a^I-\beta_i)-m(a^I-\alpha_i)<0$ and $m(b^D+\beta_i)-m(b^D+\alpha_i)<0$ for each $i\geq1$.

We now choose the continuous function sequence $\{u_i\}_{i=1}^\infty$:
 \bes
 u_i(x)=\left\{\begin{array}{cl}
 0 &\ \ \ \mbox{if $x\in[0,a^I-\gamma_i]$},\\
 {{\gamma_i-a^I+x}\over{\gamma_i-\beta_i}}\varphi_0(a^I) &\ \ \ \mbox{if $x\in(a^I-\gamma_i,a^I-\beta_i]$},\\
 \varphi_0(a^I) &\ \ \ \mbox{if $x\in(a^I-\beta_i,a^I]$},\\
 \varphi_0(x) &\ \ \ \mbox{if $x\in(a^I,b^D]$},\\
 \varphi_0(b^D) &\ \ \ \mbox{if $x\in(b^D,b^D+\beta_i]$},\\
 {{\gamma_i+b^D-x}\over{\gamma_i-\beta_i}}\varphi_0(b^D) &\ \ \ \mbox{if $x\in(b^D+\beta_i,b^D+\gamma_i]$},\\
 0&\ \ \ \mbox{if $x\in(b^D+\gamma_i,1]$}.
 \end{array}
 \right.
 \nonumber
 \ees
By the variational characterization for  $\lambda_1(s)$ and $\lambda_1^{\mathcal{NN}}(a^I,b^D)$, elementary computation gives
 \bes
 \left.\begin{array}{lll}
 \medskip
 \displaystyle
 \lambda_1^\mathcal{N}(s)&\leq&
 \medskip
 \displaystyle
 {{\int_0^1 e^{2sm(x)}[(u'_i)^2+c(x)u^2_i]dx}\over{\int_0^1 e^{2sm(x)}u^2_idx}}\\
 \medskip
 &\leq&
 \displaystyle
 {{\int_{a^I}^{b^D} e^{2sm(x)}[(u'_i)^2+c(x)u^2_i]dx}\over{\int_{a^I}^{b^D}e^{2sm(x)}u^2_idx}}
 +{{\Big(\int_{a^I-\gamma_i}^{a^I}+\int_{b^D}^{b^D+\gamma_i}\Big)\Big\{e^{2sm(x)}[(u'_i)^2+c(x)u^2_i]\Big\}dx}\over{\int_0^1 e^{2sm(x)}u^2_idx}}\\
 \medskip
 &\leq&
 \displaystyle
 \lambda_1^{\mathcal{NN}}(a^I,b^D)+I+II,
 \end{array}
 \right.
 \nonumber
 \ees
where
 $$
 I={{\int_{a^I-\gamma_i}^{a^I} e^{2sm(x)}[(u'_i)^2+c(x)u^2_i]]dx}\over{\int_0^1 e^{2sm(x)}u^2_idx}},\ \
 II={{\int_{b^D}^{b^D+\gamma_i} e^{2sm(x)}[(u'_i)^2+c(x)u^2_i]dx}\over{\int_0^1 e^{2sm(x)}u^2_idx}}.
 $$
Since $m(x)$ is non-decreasing in $[a^I-\gamma_i,a^I]$ for each $i\geq1$ and $m(x)$ is constant on $[a^I,b^D]$, for any $s\geq0$ we deduce
 \bes
 \left.\begin{array}{lll}
 \medskip
 \displaystyle
 I&\leq&
 \medskip
 \displaystyle
 {\max_{[0,1]}|c(x)|{\int_{a^I-\gamma_i}^{a^I} e^{2sm(x)}u^2_idx}\over{\int_{a^I}^{b^D}e^{2sm(x)}u^2_idx}}
 +{{\int_{a^I-\gamma_i}^{a^I-\beta_i} e^{2sm(x)}(u'_i)^2dx}\over{\int_{a^I-\alpha_i}^{a^I} e^{2sm(x)}u^2_idx}}\\
 \medskip
 &\leq&
 \displaystyle
 {\max_{[0,1]}|c(x)|{\int_{a^I-\gamma_i}^{a^I} e^{2sm(a^I)}u^2_idx}\over{\int_{a^I}^{b^D}e^{2sm(a^I)}u^2_idx}}
 +{{e^{2s[m(a^I-\beta_i)-m(a^I-\alpha_i)]}}\over{\alpha_i(\gamma_i-\beta_i)}}.
 \end{array}
 \right.
 \nonumber
 \ees
Due to $m(a^I-\beta_i)-m(a^I-\alpha_i)<0$ for each $i\geq1$, by sending $s\to\infty$ first and then
sending $i\to\infty$, we easily see that $I\to0$.
Similarly, the term $II\to0$ by sending $s\to\infty$ first and then sending $i\to\infty$.
Consequently, $\lambda^*\leq\lambda_1^{\mathcal{NN}}(a^I,b^D)$, as wanted.

The remaining assertions can be proved in a similar way, and the details are omitted.

\end{proof}

In order to estimate the lower bounds of $\lambda_1^\mathcal{N}(s)$, we need several key lemmas as follows.
In what follows, let us remember that $w(s,\cdot)$ is the positive solution of \eqref{1.3} corresponding to the principal eigenvalue
$\lambda_1^\mathcal{N}(s)$ with the normalization $\int_0^1w^2(s,x)dx=1$ for each $s>0$.
Let $\mu$ be the probability measure associated with $w(s_j,\cdot)$ defined through \eqref{1.7}.
Then we have

\begin{lem}\label{l2.5} The following assertions hold.

\begin{itemize}

  \item[(i)] Assume that there exists a closed interval $[a,b]\subset[0,1]$ such that $\lim_{j\to\infty}\int_a^b w^2(s_j,x)dx\to0$, then
 $\mu((a,b))=0$.

 \item[(ii)] Assume that there exists a closed interval $[a,b]\subset[0,1]$
such that $\mu([a,b])=0$, then $\lim_{j\to\infty}\int_a^b w^2(s_j,x)dx\to0$.

 \item[(iii)]  Assume that there exist two closed intervals $[a_*,a]$ and $[b,b_*]$ of $[0,1]$ with $0<a<b<1$
such that $\mu([a_*,a])=\mu([b,b_*])=0$, then $\lim_{j\to\infty}\int_a^b w^2(s_j,x)dx\to\mu((a,b))$.

\end{itemize}

\end{lem}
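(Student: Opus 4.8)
\emph{Plan.} I would treat the three assertions as a hands-on version of the portmanteau theorem for the finite Borel measures $\nu_j(dx):=w^2(s_j,x)\,dx$ on the compact interval $[0,1]$, which by \eqref{1.7} converge weakly to the probability measure $\mu$: assertion (i) is the ``open set'' half $\liminf_j\nu_j(U)\ge\mu(U)$, assertion (ii) is the ``closed set'' half $\limsup_j\nu_j(F)\le\mu(F)$, and assertion (iii) follows by combining the two at endpoints carrying no $\mu$-mass. The only ingredient beyond \eqref{1.7} that I would invoke is that a finite Borel measure on $[0,1]$ is inner and outer regular, so in each part it suffices to reprove the single cutoff estimate actually needed.

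\emph{Assertion (i).} I would fix an arbitrary compact set $K\subset(a,b)$ and pick $\zeta\in C([0,1])$ with $0\le\zeta\le1$, $\zeta\equiv1$ on $K$, and $\zeta$ vanishing outside $(a,b)$ (a piecewise-linear function works, and since $K$ is compactly contained in $(a,b)$ there is no interaction with the domain endpoints $\{0,1\}$). Since $\mathbf 1_K\le\zeta\le\mathbf 1_{[a,b]}$, passing to the limit in \eqref{1.7} gives
\[
\mu(K)\le\int_{[0,1]}\zeta\,d\mu=\lim_{j\to\infty}\int_0^1\zeta(x)w^2(s_j,x)\,dx\le\liminf_{j\to\infty}\int_a^b w^2(s_j,x)\,dx .
\]
Under the hypothesis of (i) the right-hand side vanishes, so $\mu(K)=0$ for every compact $K\subset(a,b)$; choosing $K=[a+\tfrac1n,b-\tfrac1n]$ and letting $n\to\infty$ yields $\mu((a,b))=0$ by inner regularity.

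\emph{Assertion (ii).} This is the one point where a little more than a direct test is required, because $[a,b]$ is closed and weak convergence does not bound closed sets from below. Given $\varepsilon>0$, I would use outer regularity of $\mu$ to find an open $U\supset[a,b]$ with $\mu(U)\le\mu([a,b])+\varepsilon=\varepsilon$, then pick $\zeta\in C([0,1])$ with $0\le\zeta\le1$, $\zeta\equiv1$ on $[a,b]$, and $\zeta$ supported in $U$. Since $\mathbf 1_{[a,b]}\le\zeta\le\mathbf 1_U$, \eqref{1.7} gives
\[
\limsup_{j\to\infty}\int_a^b w^2(s_j,x)\,dx\le\lim_{j\to\infty}\int_0^1\zeta(x)w^2(s_j,x)\,dx=\int_{[0,1]}\zeta\,d\mu\le\mu(U)\le\varepsilon ,
\]
and letting $\varepsilon\to0$ gives the claim. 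I expect this closed-interval estimate to be the main (indeed the only non-routine) obstacle in the lemma.

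\emph{Assertion (iii).} Since $a\in[a_*,a]$ and $b\in[b,b_*]$, the hypotheses force $\mu(\{a\})=\mu(\{b\})=0$, hence $\mu([a,b])=\mu((a,b))$. Applying the estimate of (ii) to $[a,b]$ gives $\limsup_j\int_a^b w^2(s_j,x)\,dx\le\mu([a,b])=\mu((a,b))$. For the reverse inequality I would run the estimate of (i) with $K=[a+\varepsilon,b-\varepsilon]$ for small $\varepsilon>0$, obtaining $\liminf_j\int_a^b w^2(s_j,x)\,dx\ge\mu([a+\varepsilon,b-\varepsilon])$, and then let $\varepsilon\to0$ so that $\liminf_j\int_a^b w^2(s_j,x)\,dx\ge\mu((a,b))$ by inner regularity. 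Comparing the two bounds gives $\lim_{j\to\infty}\int_a^b w^2(s_j,x)\,dx=\mu((a,b))$.
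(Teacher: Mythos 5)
Your proof is correct, and parts (i) and (iii) line up with the paper's cutoff-function arguments and use of inner regularity. The one genuinely different step is assertion (ii), where the paper argues by contradiction: assuming $\int_a^b w^2(s_j,x)\,dx\geq\epsilon_0$ along a subsequence, it restricts $w^2(s_j,\cdot)$ to $[a,b]$ to obtain a weak limit $\mu^*$ with $\mu^*([a,b])\geq\epsilon_0$, then decomposes $w^2(s_j,\cdot)=w^2\chi_{[0,a)}+w^2\chi_{[a,b]}+w^2\chi_{(b,1]}$, passes to further subsequences to get three measures summing to $\mu$, identifies the middle one with $\mu^*$, and concludes $\mu([a,b])\geq\epsilon_0$, a contradiction. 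You instead prove directly the portmanteau upper bound $\limsup_j\int_a^b w^2(s_j,x)\,dx\leq\mu([a,b])$ by outer regularity: choose an open $U\supset[a,b]$ with $\mu(U)\leq\mu([a,b])+\varepsilon$ and a continuous cutoff pinched between $\mathbf 1_{[a,b]}$ and $\mathbf 1_U$. Your route is shorter, avoids repeated subsequence extraction, and immediately yields the stronger quantitative inequality that you then reuse in (iii); the paper's argument reuses (ii) only in its stated form, applied to the wings $[a_*,a]$ and $[b,b_*]$ after expanding the cutoff integral, but the two routes for (iii) are essentially equivalent. Both proofs are sound; yours is arguably a more transparent instance of the standard weak-convergence machinery, and it is worth noting explicitly, as you do, that the argument for (ii) proves $\limsup_j\int_a^b w^2(s_j,x)\,dx\leq\mu([a,b])$ for arbitrary closed $[a,b]$, not just when $\mu([a,b])=0$, since that is the form invoked in (iii).
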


\begin{proof} The assertion (i) is trivial. Indeed, it is known that $\mu((a,b))=\sup\{\mu([\tilde a,\tilde b]):\ [\tilde a,\tilde b]\subset(a,b)\}$.
Also, using \eqref{1.7}, it easily follows $\mu([\tilde a,\tilde b])=0$ for any closed interval $[\tilde a,\tilde b]\subset(a,b)$. Thus, (i) holds.

We next prove (ii). Arguing indirectly, we suppose that there exist a constant $\epsilon_0>0$ and a subsequence of $\{s_j\}$,
denoted by itself for convenience, such that
 \bes
 \int_a^b w^2(s_j,x)dx\geq\epsilon_0,\ \ \forall j\geq1.
 \label{l2.5-a}
 \ees
By restricting $w^2(s_j,\cdot)$ to $(a,b)$, we may assume, up to a further subsequence, that
 \bes
 \lim_{j\to\infty}\int_a^bw^2(s_j,x)\zeta(x)dx=\int_{[a,b]}\zeta(x)\mu^*(dx),\ \ \forall \zeta\in C([a,b])
 \label{l2.5-b}
 \ees
for a unique Radon measure $\mu^*$. Taking $\zeta=1$ in \eqref{l2.5-b} and using \eqref{l2.5-a}, we have
$\mu^*([a,b])\geq\epsilon_0$.

On the other hand, for each $j\geq1$, we decompose $w^2(s_j,\cdot)$ as
 \bes
 w^2(s_j,\cdot)=w^2(s_j,\cdot)\chi_{_{[0,a)}}+w^2(s_j,\cdot)\chi_{_{[a,b]}}+w^2(s_j,\cdot)\chi_{_{(b,1]}},
 \label{l2.5-d}
 \ees
where $\chi_{_X}$ represents the characteristic function over a set $X\subset[0,1]$. As above, we assume that
 \bes
 \int_0^1w^2(s_j,x)\chi_{_{[0,a)}}\zeta(x)dx\to\int_{[0,1]}\zeta(x)\mu_1(dx),\ \
 \int_0^1w^2(s_j,x)\chi_{_{[a,b]}}\zeta(x)dx\to\int_{[0,1]}\zeta(x)\mu_2(dx),\
 \nonumber
 \ees
and
 $$
 \int_0^1w^2(s_j,x)\chi_{_{(b,1]}}\zeta(x)dx\to\int_{[0,1]}\zeta(x)\mu_3(dx)
 $$
for any $\zeta\in C([0,1])$, as $j\to\infty$, where $\mu_i\,(i=1,2,3)$ are the certain Radon measures.
In view of \eqref{l2.5-d}, it is easily seen from the definitions of $\mu,\mu_i\,(i=1,2,3)$ that $\mu=\mu_1+\mu_2+\mu_3$ on $[0,1]$.

We further note that $\mu_2=\mu^*$ on $[a,b]$. Indeed, since $w^2(s_j,\cdot)\chi_{_{[a,b]}}=0$ on $[0,a)\cup(b,1]$,
the analysis similar to that of the assertion (i) concludes that
$\mu_2([0,a))=\mu_2((b,1])=0$. In addition,
 $$
 \int_0^1w^2(s_j,x)\chi_{_{[a,b]}}\zeta(x)dx=\int_a^bw^2(s_j,x)\zeta(x)dx,\ \ \forall \zeta\in C([a,b]).
 $$
Hereafter, we extend $\zeta$, which is defined on $[a,b]$, continuously to $[0,1]$ so that the integral over $[0,1]$ makes sense.
So it follows
 \bes
 \left.\begin{array}{lll}
 \medskip
 \displaystyle
 \int_{[a,b]}\zeta(x)\mu^*(dx)&=&
 \medskip
 \displaystyle
 \int_{[0,1]}\zeta(x)\mu_2(dx)=\int_{[0,a)}\zeta(x)\mu_2(dx)
 +\int_{[a,b]}\zeta(x)\mu_2(dx)\\
 &&\ +
 \displaystyle
 \int_{(b,1]}\zeta(x)\mu_2(dx)=\int_{[a,b]}\zeta(x)\mu_2(dx),\ \ \ \ \forall \zeta\in C([a,b]),
 \end{array}
 \right.
 \nonumber
 \ees
which obviously implies $\mu_2=\mu^*$ on $[a,b]$.

Therefore, we obtain that
 $$
 \mu([a,b])=\mu_1([a,b])+\mu_2([a,b])+\mu_3([a,b])=\mu_1([a,b])+\mu^*([a,b])+\mu_3([a,b])\geq\mu^*([a,b])\geq\epsilon_0,
 $$
which arrives at a contradiction with our assumption $\mu([a,b])=0$. Thus, (ii) is proved.

Lastly, we verify (iii). In \eqref{1.7}, we choose $\zeta=1$ on $[a,b]$, $\zeta=0$ on $[0,a_*]\cup[b_*,1]$,
and $0\leq\zeta\leq1$ in $[a_*,a]\cup[b,b_*]$ so that $\zeta\in C([0,1])$. Thanks to $\mu([a_*,a])=\mu([b,b_*])=0$,
it then follows from the assertion (ii) that
 \bes
 \lim_{j\to\infty}\int_0^1w^2(s_j,x)\zeta(x)dx=\lim_{j\to\infty}\int_a^bw^2(s_j,x)dx.
 \nonumber
 \ees
On the other hand, for such chosen $\zeta$, because of $\mu([a_*,a])=\mu([b,b_*])=0$, we have
 $$
 \int_{[0,1]}\zeta(x)\mu(dx)=\int_{(a,b)}\zeta(x)\mu(dx)=\int_{(a,b)}\mu(dx)=\mu((a,b)).
 $$
In light of \eqref{1.7}, the desired conclusion is established.

\end{proof}

\begin{lem}\label{l2.6} The following assertions hold.

\begin{itemize}

 \item[(i)] Given any $x\in(0,1)$ and any $\epsilon$ with $0<\epsilon<{1\over 2}\min\{x,\,1-x\}$, then
 $$
 \lim_{j\to\infty}\int_{x-2\epsilon}^{x+2\epsilon}w^2(s_j,x)c(x)dx\geq\min_{[x-\epsilon,x+\epsilon]}c(x)\, \mu(\{x\}).
 $$

 \item[(ii)] Given any $\epsilon$ with $0<\epsilon<{1/2}$, then
 $$
 \lim_{j\to\infty}\int_0^{2\epsilon}w^2(s_j,x)c(x)dx\geq\min_{[0,\epsilon]}c(x)\, \mu(\{0\}).
 $$

 \item[(iii)]Given any $\epsilon$ with $0<\epsilon<{1/2}$ such that
 $$
 \lim_{j\to\infty}\int_{1-2\epsilon}^1w^2(s_j,x)c(x)dx\geq\min_{[1-\epsilon,1]}c(x)\, \mu(\{1\}).
 $$

\end{itemize}

\end{lem}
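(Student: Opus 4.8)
The plan is to prove all three assertions of Lemma \ref{l2.6} simultaneously, since they differ only in whether the relevant local-maximum point lies in the interior or at an endpoint. Fix such a point $x$ (say $x\in(0,1)$ for concreteness) and a small $\epsilon$ as in the statement. The idea is to localize the measure $\mu$ at the single point $x$ by a sandwiching argument. For a small parameter $\delta\in(0,\epsilon)$, choose a cutoff $\zeta_\delta\in C([0,1])$ with $\zeta_\delta\equiv 1$ on $[x-\delta,x+\delta]$, $\zeta_\delta\equiv 0$ outside $[x-2\delta,x+2\delta]$, and $0\le\zeta_\delta\le 1$; additionally arrange $\zeta_\delta$ to be supported in $[x-\epsilon,x+\epsilon]$ once $2\delta<\epsilon$. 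Then
\be
\int_{x-2\epsilon}^{x+2\epsilon}w^2(s_j,x)c(x)\,dx\ \ge\ \int_0^1 w^2(s_j,x)\,c(x)\,\zeta_\delta(x)\,dx\ -\ \Big(\max_{[0,1]}|c|\Big)\int_{\{\zeta_\delta<1\}\cap[x-2\delta,x+2\delta]}w^2(s_j,x)\,dx,
\nonumber
\ee
but it is cleaner to simply bound
\be
\int_0^1 w^2(s_j,x)\,c(x)\,\zeta_\delta(x)\,dx\ \ge\ \Big(\min_{[x-2\delta,x+2\delta]}c\Big)\int_0^1 w^2(s_j,x)\,\zeta_\delta(x)\,dx,
\nonumber
\ee
using $c\zeta_\delta\ge (\min_{[x-2\delta,x+2\delta]}c)\,\zeta_\delta$ (valid after the usual reduction to $c\ge 0$, which costs nothing since adding a constant to $c$ shifts both sides of the claimed inequality by the same amount).

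Next I would pass to the limit $j\to\infty$. Since $w^2(s_j,\cdot)c(\cdot)\zeta_\delta(\cdot)$ and $w^2(s_j,\cdot)\zeta_\delta(\cdot)$ are of the form appearing in \eqref{1.7} with continuous test functions, we get
\be
\liminf_{j\to\infty}\int_0^1 w^2(s_j,x)\,c(x)\,\zeta_\delta(x)\,dx\ \ge\ \Big(\min_{[x-2\delta,x+2\delta]}c\Big)\int_{[0,1]}\zeta_\delta(x)\,\mu(dx)\ \ge\ \Big(\min_{[x-2\delta,x+2\delta]}c\Big)\,\mu(\{x\}),
\nonumber
\ee
where the last inequality uses $\zeta_\delta\ge\chi_{\{x\}}$. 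Combining this with the fact that $\zeta_\delta$ is supported in $[x-\epsilon,x+\epsilon]\subset[x-2\epsilon,x+2\epsilon]$ yields
\be
\liminf_{j\to\infty}\int_{x-2\epsilon}^{x+2\epsilon}w^2(s_j,x)\,c(x)\,dx\ \ge\ \Big(\min_{[x-2\delta,x+2\delta]}c\Big)\,\mu(\{x\}).
\nonumber
\ee
Finally, let $\delta\to 0$: by continuity of $c$, $\min_{[x-2\delta,x+2\delta]}c\to c(x)\ge\min_{[x-\epsilon,x+\epsilon]}c$, which gives the assertion (i); in fact one even obtains the slightly stronger bound with $c(x)$ in place of $\min_{[x-\epsilon,x+\epsilon]}c$. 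For (ii) and (iii), replace the symmetric cutoff by a one-sided one supported in $[0,2\delta]$ (resp. $[1-2\delta,1]$), and the identical argument applies, with $\mu(\{0\})$ (resp. $\mu(\{1\})$) playing the role of $\mu(\{x\})$.

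The argument is essentially routine once the weak-$*$ convergence \eqref{1.7} and the reduction $c\ge 0$ are in hand; the only point requiring a little care is ensuring that all integrands are genuinely continuous on $[0,1]$ so that \eqref{1.7} applies verbatim (this is why one works with the smooth cutoff $\zeta_\delta$ rather than a characteristic function), and keeping track that the support of $\zeta_\delta$ stays inside the interval of integration on the left-hand side. There is no serious obstacle; the subtlety, if any, is purely bookkeeping about the nested intervals $[x-2\delta,x+2\delta]\subset[x-\epsilon,x+\epsilon]\subset[x-2\epsilon,x+2\epsilon]$ and taking the limits in the correct order ($j\to\infty$ first, then $\delta\to 0$).
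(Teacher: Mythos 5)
The paper's own ``proof'' is only the one-line assertion that the lemma follows from \eqref{1.7} and \eqref{1.8}, so supplying the cutoff argument is exactly what is called for, and the skeleton of your argument (test \eqref{1.7} against a continuous $\zeta_\delta$ that is $1$ near the point and is supported inside the outer interval, then let $\delta\to 0$) is the right one. The one-sided variant for the endpoints is also correct.

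There is, however, a genuine gap in your treatment of the sign of $c$. You claim the reduction to $c\ge 0$ ``costs nothing since adding a constant to $c$ shifts both sides of the claimed inequality by the same amount.'' This is false: replacing $c$ by $c+K$ shifts the left-hand side by $K\int_{x-2\epsilon}^{x+2\epsilon}w^2(s_j,\cdot)\,dy$, whereas the right-hand side shifts only by $K\mu(\{x\})$, and in the limit the former can strictly exceed the latter (it tends to $\mu$ of a whole interval, not of a single point). In fact, without the standing hypothesis $c\ge 0$ the lemma as stated is simply false: take $c\equiv -1$; if $\mu(\{x\})=0$ but $\mu\big((x-2\epsilon,x+2\epsilon)\big)>0$, then $\liminf_j\int_{x-2\epsilon}^{x+2\epsilon}w^2c\le -\mu\big((x-2\epsilon,x+2\epsilon)\big)<0$, while the right-hand side is $0$. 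What actually saves the day is that the paper performs the shift $(\lambda_1^\mathcal{N}(s),c)\mapsto(\lambda_1^\mathcal{N}(s)+\max|c|,\,c+\max|c|)$ once and for all at the level of the eigenvalue problem (see the opening of the proof of Lemma~\ref{l2.1}); this leaves $w(s,\cdot)$ and $\mu$ unchanged, and the added constant is reconciled only at the end of the $\lambda_*$-estimate (e.g.\ in \eqref{2.11}--\eqref{2.13}), where $\mu([0,1])=1$ makes the $K$'s on both sides of the final inequality cancel. You should therefore state $c\ge 0$ as an assumption (inherited from that global reduction) rather than pretend it is removable within the lemma. A smaller, but worth noting, slip: the parenthetical ``valid after the reduction to $c\ge 0$'' is attached to the inequality $c\,\zeta_\delta\ge\big(\min_{[x-2\delta,x+2\delta]}c\big)\zeta_\delta$, but that inequality holds for any $c$ since $\zeta_\delta$ is supported in $[x-2\delta,x+2\delta]$. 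The places that genuinely use $c\ge 0$ are (a) the first step $\int_{x-2\epsilon}^{x+2\epsilon}w^2c\ge\int_0^1 w^2c\,\zeta_\delta$, which drops a possibly negative contribution, and (b) multiplying $\int\zeta_\delta\,d\mu\ge\mu(\{x\})$ by $\min_{[x-2\delta,x+2\delta]}c$ without reversing the inequality.
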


\begin{proof} This lemma is a straightforward consequence of \eqref{1.7} and \eqref{1.8}.

\end{proof}

The following preliminary results aim to give a precise description of the support of $\mu$.

\begin{lem}\label{l2.2} Denote $\Omega_1=\{x\in(0,1):\ |m'(x)|>0\}\cup\{0:\ m'(0)>0\}\cup\{1:\ m'(1)<0\}$.
Then $\mu(\Omega_1)=0$.

\end{lem}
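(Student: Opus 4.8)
\textbf{Proof plan for Lemma \ref{l2.2}.}
The goal is to show that the limiting probability measure $\mu$ places no mass on the open set $\Omega_1$ where $m$ is strictly monotone (together with the boundary points at which $m'$ points into the interior). The plan is to localize: it suffices to show that $\mu$ vanishes on every small closed subinterval $[a,b]$ contained in $\Omega_1$, since $\Omega_1$ is open and can be exhausted by countably many such intervals, and then invoke Lemma \ref{l2.5}(ii) to transfer this to the statement about $w^2(s_j,\cdot)$. On such an interval, after shrinking if necessary, we may assume $m'$ has a fixed sign and is bounded away from $0$, say $m'(x)\geq\delta>0$ on $[a,b]$ (the case $m'<0$ is symmetric, and the boundary cases $0\in\Omega_1$, $1\in\Omega_1$ are handled by one-sided versions of the same argument). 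I would also enlarge slightly to an interval $[a_*,b_*]\supset[a,b]$ still inside $\Omega_1$ with the same sign of $m'$, to have room for a cut-off.

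The key mechanism is the variational characterization \eqref{1.4}, namely $\lambda_1^\mathcal{N}(s)=\min_{\|w\|_{L^2}=1}\int_0^1[(w'-swm')^2+cw^2]dx$, evaluated at $w=w(s,\cdot)$. Since $\lambda_1^\mathcal{N}(s)$ is bounded (by $\max_{[0,1]}|c|$), the term $\int_0^1(w'-swm')^2dx$ stays bounded in $s$; combined with $\int_0^1 cw^2\,dx$ being bounded this gives a uniform bound on $\int_0^1(w'(s,x)-sw(s,x)m'(x))^2\,dx\leq C$. The idea is then to test the weak form of equation \eqref{1.3} — equivalently, to integrate by parts in the identity $\int (w'-swm')^2 \leq C$ against a suitable cut-off — to extract a differential inequality forcing $\int_a^b w^2(s,x)\,dx\to 0$. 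Concretely, pick $\eta\in C_c^\infty((a_*,b_*))$ with $\eta\equiv1$ on $[a,b]$, $0\leq\eta\leq1$. Consider $\frac{d}{dx}\bigl(\eta^2 e^{2sm} w\cdot(\text{something})\bigr)$; more efficiently, use that $v:=e^{sm}\varphi$ with $\varphi=e^{-sm}w$ solves the original Neumann problem and the weighted identity $\int_0^1 e^{2sm}[(\varphi')^2+c\varphi^2]\,dx=\lambda_1^\mathcal{N}(s)\int_0^1 e^{2sm}\varphi^2\,dx$. Testing with $\eta^2\varphi$ and integrating by parts yields $\int \eta^2 e^{2sm}(\varphi')^2 \leq \int e^{2sm}\varphi^2(2|\eta\eta'|\cdot|\varphi'/\varphi|\cdot\eta + |c|\eta^2 + \lambda_1^\mathcal{N}(s)\eta^2)$-type terms, but the cleaner route is: on $[a_*,b_*]$ write $w' - swm' = g$ with $\|g\|_{L^2([0,1])}\leq \sqrt C$; then $(\log w)' = sm' + g/w$ wherever $w>0$, and since $sm'\geq s\delta$ is large and positive, $w$ must grow exponentially across $[a_*,b_*]$ unless it is forced to be negligible — but $w$ also satisfies $\int_0^1 w^2=1$, so its mass cannot concentrate at the right endpoint and propagate. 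Turning this into a rigorous bound: integrate $g = w' - swm'$ against $e^{-2sm}$ times a cutoff and use Cauchy–Schwarz to obtain $\int_a^b w^2 \leq \frac{C}{s\delta}\cdot(\text{bounded geometric factor from } e^{2s(m(b_*)-m(a_*))})$, whence $\to 0$ as $s\to\infty$. (One runs this with $[a,b]$ replaced by the full $[a_*,b_*]$ and the cut-off chosen so the boundary terms land where $\eta'\neq0$, i.e.\ in $[a_*,a]\cup[b,b_*]$, absorbing them by the uniform $L^2$ bound on $g$ and the already-known smallness there via induction on a finite cover — or more simply, by choosing the cover so that one of the outer pieces touches a point where $\mu$-mass is already controlled.)

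I expect the main obstacle to be making the exponential-weight bookkeeping airtight: the factor $e^{2s(m(b_*)-m(a_*))}$ is large, so one must be careful that the Cauchy–Schwarz estimate produces a matching large denominator (from $\int e^{2sm}$ over the region where $m$ is near its max on $[a_*,b_*]$, i.e.\ near $b_*$ when $m'>0$) so that the ratio is $O(1/s)$ rather than growing. The correct way to see this is precisely the substitution-free estimate: from $w'=swm'+g$ on $[a_*,b_*]$, for $x<b_*$ we get $w(b_*) = e^{s(m(b_*)-m(x))}w(x) + \int_x^{b_*} e^{s(m(b_*)-m(t))}g(t)\,dt$, so $w(x)\leq e^{-s(m(b_*)-m(x))}w(b_*) + \int_x^{b_*}e^{-s(m(t)-m(x))}|g(t)|\,dt$; since $m(t)-m(x)\geq \delta(t-x)$ for $t>x$, the integral term is bounded in $L^2(dx)$ by $\frac{C'}{s\delta}$ via Young's convolution inequality, and the first term's $L^2$-norm is $\leq \frac{1}{\sqrt{2s\delta}}w(b_*)$, with $w(b_*)$ controlled since the same argument applied just past $b_*$ (still inside $\Omega_1$, where monotonicity continues on a slightly larger interval) keeps $w$ from blowing up relative to its unit $L^2$ norm. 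Putting these together gives $\int_a^b w^2(s,x)\,dx = O(1/s)\to 0$, and then Lemma \ref{l2.5}(i) (or (ii)) gives $\mu((a,b))=0$; a countable union and the boundary-point variants complete the proof that $\mu(\Omega_1)=0$. $\hfill\square$
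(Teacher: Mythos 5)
The paper does not actually present a proof of this lemma: it simply cites Lemmas 3.1 and 3.4 of \cite{CL1}, so there is no in-paper argument to compare against. Your proposal supplies a self-contained, genuinely one-dimensional argument: from \eqref{1.4} and the bound $|\lambda_1^{\mathcal{N}}(s)|\le\max_{[0,1]}|c|$ you get the uniform $L^2$ bound on $g:=w'-swm'$, you then solve the first-order linear ODE $w'-swm'=g$ by the integrating factor $e^{-sm}$ to obtain the Duhamel representation of $w$ on an interval where $m'$ has a definite sign, and you combine the resulting decay estimate with Lemma \ref{l2.5} to kill the measure. This is a legitimate and rather clean route, though it exploits one space dimension in an essential way (the explicit solution of a scalar first-order ODE), whereas the cited lemmas of \cite{CL1} are stated in arbitrary dimension.

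The one spot that genuinely needs tightening is the control of $w(b_*)$. After reaching $\|w\|_{L^2([a,b])}\le\frac{w(b_*)}{\sqrt{2s\delta}}+\frac{\sqrt C}{s\delta}$, you assert that ``$w(b_*)$ is controlled since the same argument applied just past $b_*$ keeps $w$ from blowing up.'' Taken literally, re-running the \emph{decay} estimate on $[b_*,b_{**}]$ is circular: it bounds $w$ on $[b_*,b_{**}]$ in terms of $w(b_{**})$, not $w(b_*)$ itself, and an $L^2$ bound on $[b_*,b_{**}]$ does not give a pointwise bound at $b_*$. What actually closes the argument is either (a) the complementary \emph{growth} estimate: for $t>b_*$ the same Duhamel formula gives $w(t)\ge e^{s(m(t)-m(b_*))}\bigl(w(b_*)-\tfrac{\sqrt C}{\sqrt{2s\delta}}\bigr)$, so a lower bound $w(b_*)\ge\varepsilon>0$ uniform in $s$ would make $\int_{b_*}^{b'}w^2$ grow like $e^{2s\delta(b'-b_*)}/s$, contradicting $\int_0^1 w^2=1$; or, more simply, (b) a pigeonhole choice of $b_*$ \emph{depending on $s$}: since $\int_0^1w^2(s,\cdot)=1$, there is $b_*\in[(b+b')/2,\,b']$ with $w^2(s,b_*)\le 2/(b'-b)$, and then the decay estimate closes because $b_*-b\ge\tfrac12(b'-b)$ is bounded below, producing in fact an exponentially small first term. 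Either fix is elementary, and with it your argument is correct; the countable cover of the open set and the one-sided versions at $x=0,1$ (where the boundary condition in \eqref{1.3} forces $g=0$ at the relevant endpoint) go through exactly as you indicate.
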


\begin{proof} This result follows directly from Lemma 3.1 and Lemma 3.4 of \cite{CL1}.

\end{proof}

\begin{lem}\label{l2.3} The following assertions hold.

 \begin{itemize}

 \item[(i)] Assume that $|m'(x)|>0$ in an open interval $(a,b)\subset[0,1]$.
Then $w(s,\cdot)\to0$ locally uniformly in $(a,b)$ as $s\to\infty;$

 \item[(ii)] Assume that $m'(x)>0$ in an interval $[0,a)\subset[0,1]$.
Then $w(s,\cdot)\to0$ locally uniformly in $[0,a)$ as $s\to\infty$.

 \item[(iii)] Assume that $m'(x)<0$ in an interval $(a,1]\subset[0,1]$.
Then $w(s,\cdot)\to0$ locally uniformly in $(a,1]$ as $s\to\infty$.

 \end{itemize}

\end{lem}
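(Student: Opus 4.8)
The plan is to prove (i) by a one-dimensional comparison (barrier) argument exploiting that the zeroth-order coefficient of the equation \eqref{1.3} for $w(s,\cdot)$ becomes large and positive, uniformly on compact subintervals of $(a,b)$, as $s\to\infty$; assertions (ii) and (iii) will then be deduced from (i) together with a monotonicity observation near the boundary that is forced by the Robin term in \eqref{1.3}.

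For (i), I would fix a compact interval $[a',b']\subset(a,b)$ and auxiliary points $a<\alpha_1<\alpha_2<a'\le b'<\beta_2<\beta_1<b$. Since $m'$ is continuous and nonvanishing on $[\alpha_1,\beta_1]$, one has $\delta:=\min_{[\alpha_1,\beta_1]}(m'(x))^2>0$, and, recalling $\min_{[0,1]}c\le\lambda_1^\mathcal{N}(s)\le\max_{[0,1]}c$ together with the boundedness of $m''$ and $c$, for all large $s$ the coefficient $V_s:=s^2(m')^2+sm''+c-\lambda_1^\mathcal{N}(s)$ in $w''=V_s w$ satisfies $V_s\ge\mu_s:=\frac{\delta}{2}s^2>0$ on $[\alpha_1,\beta_1]$, so $w(s,\cdot)$ is convex there. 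Using the normalization $\int_0^1w^2(s,\cdot)\,dx=1$ and the mean value theorem on the two buffer intervals $[\alpha_1,\alpha_2]$ and $[\beta_2,\beta_1]$, I would pick $p=p(s)\in[\alpha_1,\alpha_2]$ and $q=q(s)\in[\beta_2,\beta_1]$ with $w(s,p)\le M:=(\alpha_2-\alpha_1)^{-1/2}$ and $w(s,q)\le M':=(\beta_1-\beta_2)^{-1/2}$, and then compare $w(s,\cdot)$ on $[p,q]$ with the explicit solution of $\overline w''=\mu_s\overline w$ with $\overline w(p)=M$, $\overline w(q)=M'$:
\[
\overline w(x)=M\,\frac{\sinh\!\big(\sqrt{\mu_s}\,(q-x)\big)}{\sinh\!\big(\sqrt{\mu_s}\,(q-p)\big)}+M'\,\frac{\sinh\!\big(\sqrt{\mu_s}\,(x-p)\big)}{\sinh\!\big(\sqrt{\mu_s}\,(q-p)\big)}.
\]
The difference $z:=w(s,\cdot)-\overline w$ satisfies $z''-\mu_s z=(V_s-\mu_s)w\ge0$ on $[p,q]$ with $z(p),z(q)\le0$, and since $\mu_s>0$ the elementary maximum principle gives $w(s,\cdot)\le\overline w$ on $[p,q]\supset[a',b']$. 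For $x\in[a',b']$, since $x-p\ge a'-\alpha_2>0$ and $q-x\ge\beta_2-b'>0$, a crude estimation of the hyperbolic sines yields $w(s,x)\le 2Me^{-\sqrt{\mu_s}\,(a'-\alpha_2)}+2M'e^{-\sqrt{\mu_s}\,(\beta_2-b')}$ for large $s$, which tends to $0$ uniformly on $[a',b']$ because $\sqrt{\mu_s}\to\infty$.

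For (ii), I would fix $a'\in(0,a)$, pick $\tilde a\in(a',a)$, and note that, as above, $V_s\ge0$ on $[0,\tilde a]$ for large $s$, so $w(s,\cdot)$ is convex there; moreover the boundary condition in \eqref{1.3} gives $w'(s,0)=s\,m'(0)\,w(s,0)>0$ (as $m'(0)>0$ and $w(s,0)>0$), whence $w'(s,\cdot)>0$ on $[0,\tilde a]$ and so $\sup_{[0,a']}w(s,\cdot)=w(s,a')$. Applying (i) to the open interval $(0,a)$, on which $|m'|>0$, gives $w(s,a')\to0$, hence $\sup_{[0,a']}w(s,\cdot)\to0$. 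Assertion (iii) is symmetric, using $w'(s,1)=s\,m'(1)\,w(s,1)<0$ to see that $w(s,\cdot)$ is non-increasing on a left-neighbourhood of $1$, and then invoking (i) on $(a,1)$.

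I expect the main obstacle to be exactly this boundary issue: the barrier $\overline w$ does not decay near a boundary point of $[0,1]$, so (ii) and (iii) cannot be read off from a direct comparison and one must instead use the sign of $w'(s,\cdot)$ at $0$ or $1$ produced by the Robin term $s\,m'(\cdot)\,w(s,\cdot)$ in \eqref{1.3}, together with convexity of $w(s,\cdot)$, to reduce matters to the interior estimate (i). Everything else is routine, relying only on the boundedness of $\lambda_1^\mathcal{N}(s)$, the $L^2$-normalization of $w(s,\cdot)$, and the standard maximum principle for $w''=V_s w$ with a large positive potential.
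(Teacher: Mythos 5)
Your proof is correct and follows the same overall strategy as the paper's: a comparison/barrier argument against an explicit solution of $\overline w''=\mu_s\overline w$, exploiting the fact that the potential $V_s=s^2(m')^2+sm''+c-\lambda_1^{\mathcal{N}}(s)$ becomes large and positive on compact subintervals where $|m'|>0$. The noteworthy difference is how the boundary data for the barrier are produced. The paper first invokes Lemma~\ref{l2.2} (the measure-theoretic fact $\mu(\Omega_1)=0$) to extract a subsequence $\{s_j\}$ along which $w(s_j,\cdot)\to 0$ a.e.\ in $(a,b)$, and then chooses particular points $a+\delta$, $b-\delta$ at which $w(s_j,\cdot)\le 1$ for large $j$; as written this is a subsequence argument. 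You instead use only the normalization $\int_0^1 w^2(s,\cdot)\,dx=1$ together with a Chebyshev-type mean value argument on the buffer intervals $[\alpha_1,\alpha_2]$ and $[\beta_2,\beta_1]$ to produce, for \emph{every} $s$, points $p(s),q(s)$ at which $w$ is bounded by $s$-independent constants. This is more elementary (no appeal to Lemma~\ref{l2.2} or to the Radon measure $\mu$ at all) and yields the conclusion directly for $s\to\infty$ rather than along a subsequence. Parts (ii) and (iii) also differ slightly in implementation: the paper constructs a separate barrier with a Neumann condition at $0$, whereas you observe that convexity of $w(s,\cdot)$ (from $V_s\ge 0$ near $0$ for large $s$) combined with $w'(s,0)=s\,m'(0)\,w(s,0)>0$ forces $w(s,\cdot)$ to be increasing on $[0,\tilde a]$, so the supremum over $[0,a']$ is attained at the interior point $a'$ and (i) applies. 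Both routes are sound; yours is a bit cleaner and more self-contained.
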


\begin{proof} Under the assumption of (i), we first claim that, by passing to a sequence, $w(s_j,\cdot)\to0$ a.e. in $(a,b)$ as $j\to\infty$.
In fact, given small $\delta>0$, we take $\zeta\in C([0,1])$ in \eqref{1.7} such that
$\zeta(x)=1$ on $[a+2\delta,b-2\delta]$, $\zeta(x)=0$ on $[0,a+\delta]\cup[b-\delta,1]$ and $0\leq\zeta(x)\leq1$ on $[0,1]$. Then
from \eqref{1.7} and Lemma \ref{l2.2} it follows that
 \bes
 \left.\begin{array}{lll}
 \medskip
 \displaystyle
 0\leq\lim_{j\to\infty}\int_{a+2\delta}^{b-2\delta}w^2(s_j,x)dx&\leq&
 \medskip
 \displaystyle
 \lim_{j\to\infty}\int_{a+\delta}^{b-\delta}w^2(s_j,x)\zeta(x)dx\\
 &=&
 \displaystyle
 \int_{[a+\delta,b-\delta]}\zeta(x)\mu(dx)\leq\mu([a+\delta,b-\delta])=0.
 \end{array}
 \right.
 \nonumber
 \ees
Clearly, this implies that $w(s_j,\cdot)\to0$ a.e. in $(a+2\delta,b-2\delta)$ as $j\to\infty$. Since $\delta$ can be arbitrarily small,
we have $w(s_j,\cdot)\to0$ a.e. in $(a,b)$, as claimed.

Given small $\delta>0$, by our assumption, there exists a positive constant $c_0=c_0(\delta)<1$ such that
$|m'(x)|\geq c_0$ and $|m''(x)|\leq 1/{c_0}$ on $[a+\delta,b-\delta]$.  Thanks to the claim proved before, we assume, without loss of generality, that
$w(s_j,a+\delta)\to0$ and $w(s_j,b-\delta)\to0$ as $j\to\infty$. Thus, we can find a large integer $J_0$ such that
$w(s_j,a+\delta),\,w(s_j,b-\delta)\leq1,\,\forall j\geq J_0$.
Furthermore, taking larger $J_0$ if necessary and using \eqref{1.3}, we observe that, for all $j\geq J_0$, $w(s_j,x)$ satisfies
 $$
 -w''(s_j,x)\leq-{1\over2}c_0^2s_j^2w(s_j,x),\ \ a+\delta<x<b-\delta.
 $$
Hence, for each $j\geq J_0$, $w(s_j,x)$ is a subsolution of the elliptic problem:
 \bes
 -u''=-{1\over2}c_0^2s_j^2u,\ \ a+\delta<x<b-\delta;\ \ \ u(s_j,a+\delta)=u(s_j,b-\delta)=1.
 \label{2.3}
 \ees
Simple analysis shows that the unique solution of \eqref{2.3}, denoted by $u_j$, satisfies $
w(s_j,\cdot)\leq u_j\to0$ locally uniformly in $(a+\delta,b-\delta)$ as $j\to\infty$. Due to the arbitrariness of $\delta$,
$w(s_j,\cdot)\to0$ locally uniformly in $(a,b)$, and so the assertion (i) holds.

The assertions (ii) and (iii) can be proved similarly. Indeed, to verify (ii), since $m'(0),\,w(s_j,0)>0$,
we get from the boundary condition in \eqref{1.3} that $w'(s_j,0)>0$ for each $j\geq1$.
As above, for any given small $\delta>0$, there exist a positive constant $c_0<1$ and a large integer $J_0$ such that, for all $j\geq J_0$, $w(s_j,x)$ satisfies
 $$
 -w''(s_j,x)\leq-{1\over2}c_0^2s_j^2w(s_j,x),\ \ 0<x<a-\delta; \ \ \ w'(s_j,0)>0,\ \ w(s_j,a-\delta)\leq1.
 $$
On the other hand, instead of considering the auxiliary problem \eqref{2.3}, we resort to the problem:
 \bes
 -u''=-{1\over2}c_0^2s_j^2u,\ \ 0<x<a-\delta;\ \ \ u'(s_j,0)=0,\ \ u(s_j,a-\delta)=1.
 \label{2.3a}
 \ees
It is easy to check that the unique solution $u_j$ of \eqref{2.3a} converges to $0$ locally uniformly in $[0,a-\delta)$.
Clearly, $w(s_j,x)$ is a subsolution of \eqref{2.3a}. A simple comparison argument asserts $w(s_j,\cdot)\leq u_j$, and so
$w(s_j,\cdot)\to0$ locally uniformly in $[0,a)$ as $j\to\infty$. Thus, Lemma \ref{l2.3} is proved.
\end{proof}

\begin{lem}\label{l2.4} Assume that $m$ is strictly increasing or strictly decreasing on $[a_1,a_2]\subset[0,1]$, then $\mu((a_1,a_2))=0$.

\end{lem}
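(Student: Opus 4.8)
The plan is to show that the normalized eigenfunction $w(s,\cdot)$ carries no limiting mass on $(a_1,a_2)$, by pairing the $s$-uniform energy bound coming from the variational characterization \eqref{1.4} with the first-order ODE obeyed by $w(s,\cdot)$. I would assume, without loss of generality, that $m$ is strictly increasing on $[a_1,a_2]$ (the strictly decreasing case being symmetric; one then integrates from the left endpoint instead of the right). Since $w(s,\cdot)$ realizes the minimum in \eqref{1.4} with $\int_0^1 w^2(s,\cdot)\,dx=1$, one has
\[
\int_0^1\big(w'(s,x)-s\,w(s,x)m'(x)\big)^2\,dx=\lambda_1^{\mathcal N}(s)-\int_0^1 c\,w^2(s,\cdot)\,dx\le C_0:=\max_{[0,1]}c-\min_{[0,1]}c .
\]
Writing $g(s,x):=w'(s,x)-s\,w(s,x)m'(x)$, this says $\|g(s,\cdot)\|_{L^2(0,1)}^2\le C_0$ uniformly in $s$, and moreover $(e^{-sm}w(s,\cdot))'=e^{-sm}g(s,\cdot)$.

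By Lemma \ref{l2.5}(i) it suffices to prove $\lim_{j\to\infty}\int_{\tilde a}^{\tilde b}w^2(s_j,x)\,dx=0$ for every closed subinterval $[\tilde a,\tilde b]\subset(a_1,a_2)$; this yields $\mu((\tilde a,\tilde b))=0$ for all such intervals, and letting $\tilde a\downarrow a_1$, $\tilde b\uparrow a_2$ gives $\mu((a_1,a_2))=0$. So fix $[\tilde a,\tilde b]\subset(a_1,a_2)$. As $m\in C^2$ is strictly increasing on $[a_1,a_2]$, $m'\ge 0$ there and $m'$ vanishes on no subinterval, hence $\{m'>0\}$ is open and dense in $(a_1,a_2)$; I would pick $\hat b\in(\tilde b,a_2)$ together with $\delta>0$ such that $(\hat b-\delta,\hat b+\delta)\subset(a_1,a_2)$ and $m'>0$ on $(\hat b-\delta,\hat b+\delta)$, so that Lemma \ref{l2.3}(i) gives $w(s,\hat b)\to0$ as $s\to\infty$. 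Integrating $(e^{-sm}w)'=e^{-sm}g$ from $x$ to $\hat b$ then gives, for $x\in[\tilde a,\tilde b]$,
\[
w(s,x)=w(s,\hat b)\,e^{-s(m(\hat b)-m(x))}-\int_x^{\hat b}e^{s(m(x)-m(t))}g(s,t)\,dt ,
\]
and, using $m(t)>m(x)$ for $t>x$, Cauchy--Schwarz, and $\int_x^{\hat b}g^2\le C_0$,
\[
w^2(s,x)\le 2\,w^2(s,\hat b)\,e^{-2s(m(\hat b)-m(x))}+2C_0\int_x^{\hat b}e^{-2s(m(t)-m(x))}\,dt .
\]

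Integrating this over $x\in[\tilde a,\tilde b]$ and bounding $m(\hat b)-m(x)\ge m(\hat b)-m(\tilde b)>0$,
\[
\int_{\tilde a}^{\tilde b}w^2(s,x)\,dx\le 2(\tilde b-\tilde a)\,w^2(s,\hat b)\,e^{-2s(m(\hat b)-m(\tilde b))}+2C_0\int_{\tilde a}^{\tilde b}\!\int_x^{\hat b}e^{-2s(m(t)-m(x))}\,dt\,dx .
\]
Along $s=s_j\to\infty$ the first term tends to $0$, since $w(s_j,\hat b)\to0$ and the exponential factor tends to $0$; the second tends to $0$ by dominated convergence, as the integrand is $\le 1$ on the fixed finite-area set $\{\tilde a\le x\le\tilde b,\ x\le t\le\hat b\}$ and tends to $0$ at every point with $t>x$, i.e. almost everywhere. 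This establishes the claim, so $\mu((a_1,a_2))=0$.

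The hard part is the degeneracy permitted by \eqref{a}: the zero set $\{x\in[a_1,a_2]:m'(x)=0\}$ may be nowhere dense and yet have positive Lebesgue measure, which makes the subsolution/comparison argument behind Lemma \ref{l2.3} (which needs $|m'|$ bounded away from $0$) unavailable here. The ODE-representation device, fed by the $s$-uniform $L^2$ bound on $g(s,\cdot)$, is what sidesteps this; the one delicate point is to anchor the representation at a single $s$-independent point $\hat b$ where Lemma \ref{l2.3}(i) already guarantees decay, rather than at a point depending on $s$.
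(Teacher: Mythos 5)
Your proof is correct, and it takes a genuinely different and substantially more elementary route than the paper. The paper first establishes a uniform $L^\infty$ bound on $w(s_j,\cdot)$ on $(a_1,a_2)$ via an intricate ``rearrangement of monotone curve segments'' argument (invoking Sard's lemma and several rounds of horizontal/vertical translations), and then splits into the two cases $|\{m'=0\}\cap(a_1,a_2)|=0$ and $>0$, the latter requiring a further open-dense approximation $\{\mathcal{A}_k\}$ of $\{m'>0\}$ together with the distance estimate \eqref{claim2} and the oscillation bound \eqref{claim3}. Your argument replaces all of this with the exact first-order identity $(e^{-sm}w)'=e^{-sm}g$, the uniform bound $\|g(s,\cdot)\|_{L^2(0,1)}^2\le\max c-\min c$ supplied for free by the variational characterization \eqref{1.4}, and integration from an $s$-independent anchor $\hat b$ in the open dense set $\{m'>0\}\cap(\tilde b,a_2)$, where Lemma \ref{l2.3}(i) already gives $w(s_j,\hat b)\to0$. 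The key gain is conceptual: the only monotonicity fact used is $m(t)>m(x)$ for $t>x$, which is exactly strict monotonicity of $m$ and makes no reference to the size of $m'$; hence the ``bad set'' $\{m'=0\}$ of possibly positive measure (the reason the paper's Case \textbf{B} is delicate and the reason Lemma \ref{l2.3}'s comparison argument is unavailable) is handled automatically, via dominated convergence over the finite-area region $\{\tilde a\le x\le\tilde b,\ x\le t\le\hat b\}$. Your approach buys brevity and robustness to the degeneracy of $m'$; the paper's rearrangement machinery, on the other hand, yields the $s$-uniform $L^\infty$ bound on $w$ as an explicit byproduct, which the paper subsequently reuses verbatim (e.g.\ at \eqref{claim5} and \eqref{claim9} in Lemma \ref{l2.4a}), so if one adopted your proof one would still need a version of that bound---or your representation formula---at those later points. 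One small remark: it is worth being explicit that the convergence $w(s_j,\hat b)\to0$ you invoke is along the same subsequence $\{s_j\}$ through which $\mu$ is defined in \eqref{1.7}, which is what Lemma \ref{l2.3}(i) and Lemma \ref{l2.2} actually deliver; as stated your argument uses exactly this.
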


\begin{proof} We only consider the case that $m$ is strictly increasing on $[a_1,a_2]$
since the assertion in the other case can be established similarly.
To the end, it suffices to show that $\mu((a_1,b_i])=0,\,\forall i\geq1$ for a given sequence $\{b_i\}_{i\geq1}$
satisfying $b_i<a_2$ for each $i\geq1$ and $b_i\to a_2$ as $i\to\infty$.

For any fixed $i\geq1$, we first claim that
 \bes
 \label{claim1}
 \mbox{$\|w(s_j,\cdot)\|_{L^\infty(a_1,b_i)}\leq M$ for some positive constant $M$, independent of $j\geq1$.}
 \ees
We proceed by an indirect argument. To produce a contradiction, the analysis below turns out to be rather long,
and for clarity we divide it into three steps.

\vskip6pt
{\it Step 1.} Suppose that the claim \eqref{claim1} is false. Then, there is a sequence of $\{s_j\}_{j\geq1}$,
labelled by itself for convenience, such that $\|w(s_j,\cdot)\|_{L^\infty(a_1,b_i)}\to\infty$ as $j\to\infty$.
For each $j$, we can find $x_j\in[a_1,b_i]$ such that $w(s_j,x_j)\to\infty$ as $j\to\infty$. By taking
 $$
 M=2\Big(1+\max_{[0,1]}|c(x)|\Big)^{1/2},
 $$
we have $w(s_j,x_j)\geq M,\,\forall j\geq j_0$ for some large integer $j_0$.

On the other hand, as $m$ is strictly increasing on $[a_1,a_2]$, it is clear that
$\{m'(x)>0\}\cap(b_i,a_2)\not=\emptyset$. So one can find a $y_0\in(b_i,a_2)$ such that
$m'(x)>0$ in a small neighbourhood of $y_0$. Hence, Lemma \ref{l2.3} ensures that,
by taking larger $j_0$ if necessary, $w(s_j,y_0)\leq 1/M$ for all $j\geq k_0$.

From now on, we fix $j=j_0$. For simplicity, denote
 $$
 \mbox{$a_*=x{_{j_0}},\ b_*=y_0$\ \ and\ \ $w(x)=w(s_{_{j_0}},x)$.}
 $$
Thus, $a_*<b_*$, and
 \bes
 \label{2.s1}
 w(b_*)\leq 1/M<M\leq w(a_*).
 \ees
In view of $w\in C^1([a_*,b_*])$ and \eqref{2.s1}, clearly, $\{x\in(a_*,b_*): \ w'(x)<0\}$
is a nonempty open set, which is therefore the union of at most countably many disjoint open intervals
and
 $$
 \mathcal{G}:=\{x\in[a_*,b_*]:\ w'(x)=0\}
 $$
is a closed set. So we assume that
 $$
 \{x\in(a_*,b_*): \ w'(x)<0\}=\bigcup_{i\in\mathbb{N}}(\hat a_i,\hat b_i),
 $$
where $\mathbb{N}$ is a given set consisting of at most countably many integers, and
$(\hat a_i,\hat b_i)\cap(\hat a_j,\hat b_j)=\emptyset,\,\forall i\not=j$. In addition, since
$w$ is strictly decreasing in each $(\hat a_i,\hat b_i)$, it is easily seen that $(w(\hat b_i),w(\hat a_i))$ is an open interval and
 \bes
 \label{2.s2}
 (w(b_*),w(a_*))\subseteq w(\mathcal{G})\cup \bigcup_{i\in\mathbb{N}}(w(\hat b_i),w(\hat a_i)).
 \ees

From Sard's Lemma (see, for instance, Theorem 3.6.3 of \cite{AMR}) it follows that the Lebesgue measure $|w(\mathcal{G})|=0$.
This fact, together with \eqref{2.s2}, allows us to assert that
 \bes
 \label{2.s3}
 \Big|\bigcup_{i\in\mathbb{N}}(w(\hat b_i),w(\hat a_i))\Big|=\Big|w(\mathcal{G})\cup \bigcup_{i\in\mathbb{N}}(w(\hat b_i),w(\hat a_i))\Big|\geq w(a_*)-w(b_*).
 \ees

\vskip6pt
{\it Step 2.} The rearrangement of the curve sequence $\{(x,w(x)):\ x\in(\hat a_i,\hat b_i)\}_{i\in\mathbb{N}}$. We will proceed in three substeps. Set
 $$
 \mathcal{F}=\{(\hat a_i,\hat b_i):\ i\in\mathbb{N}\}.
 $$
Recall that $w$ is strictly decreasing in each $(\hat a_i,\hat b_i)\in\mathcal{F}$. We take an arbitrary interval, say, $(\hat a_1,\hat b_1)\in\mathcal{F}$.

As a first substep, we are going to do the upward extension for the $C^1$-curve $\{(x,w(x)):\,x\in[\hat a_1,\hat b_1]\}$
by picking up some other elements from the curve sequence $\{(x,w(x)):\ x\in(\hat a_i,\hat b_i)\}_{i\in\mathbb{N}}$
appropriately. We start at the highest point $(\hat a_1,w(\hat a_1))$ of $\{(x,w(x)):\,x\in[\hat a_1,\hat b_1]\}$, and operate in the following procedures.

If $w(\hat a_1)\geq w(x)$ for all $x\in[\hat a_i,\hat b_i]$ and any $i\in\mathbb{N}$, then we do not need conduct
the upward extension, and just define $\underline w_1(x)=w(x)$ on $[\hat a_1,\hat b_1]$.

If there exists some $(\underline a,\underline b)\in\mathcal{F}$ satisfying $w(\hat a_1)\in[w(\underline b),w(\underline a))$,
then there is a unique $\underline c\in(\underline a,\underline b]$ such that $w(\hat a_1)=w(\underline c)<w(x)$
for all $x\in[\underline a,\underline c)$. Then, we move the curve $\{(x,w(x)):\,x\in[\underline a,\underline c]\}$
by horizontal translation so that its lowest point $(\underline c,w(\underline c))$ overlaps the highest point
$(\hat a_1,w(\hat a_1))$ of the curve $\{(x,w(x)):\,x\in[\hat a_1,\hat b_1]\}$.
Thus, we obtain an extended curve, denoted by $\{(x,\underline w_1(x)):\,x\in[\underline a_1,\hat b_1]$, with the
highest point $(\underline a_1, w(\underline a))$, where $\underline a_1=\hat a_1+\underline a-\underline c$ and
$\underline w_1$ is a continuous and strictly decreasing function on $[\underline a_1,\hat b_1]$.

Now, if there exists some $(\tilde a,\tilde b)\in\mathcal{F}$ satisfying $w_1(\underline a_1)\in[w(\tilde b),w(\tilde a))$,
then there is a unique $\tilde c\in(\tilde a,\tilde b]$ such that $w_1(\underline a_1)=w(\tilde c)<w(x)$ for all
$x\in[\tilde a,\tilde c)$. Similarly as before, translating the curve $\{(x,w(x)):\,x\in[\tilde a,\tilde c]\}$ until its
lowest point $(\tilde c,w(\tilde c))$ coincides with the highest point $(\underline a_1,w_1(\underline a_1))$ of the curve
$\{(x,w_1(x)):\,x\in[\underline a_1,\hat b_1]\}$, we obtain an extended continuous curve which has the highest point
$(\underline a_2, w(\tilde a))$, where $\underline a_2=\underline a_1+\tilde a-\tilde c$. We use $\underline w_2$,
which is defined on $[\underline a_2,\hat b_1]$ and continuous and strictly decreasing, to represent the function of such a new curve.

If such $(\tilde a,\tilde b)\in\mathcal{F}$ can not be found, we then define $\underline w_2(x)=\underline w_1(x)$
on $[\underline a_1,\hat b_1]$.

In a similar way, we conduct the upward extension for the curve $\{(x,\underline w_2(x)):\,x\in[\underline a_2,\hat b_1]\}$.
After at most countably many times, we obtain the continuous curve $\{(x,\underline w_1(x)):\ x\in[a_{1,+},\hat b_1]\}$,
where $a_{1,+}=\inf\{\underline a_0,\underline a_1,\underline a_2,\cdots\}\geq \hat b_1-(b_*-a_*)$,
and $\underline W_1(x)=\underline w_k(x),\ \forall x\in[\underline a_k,\hat b_1],\ k=0,1,2,\cdots$, and
$\underline W_1(a_{1,+})\not\in[w(\hat b_i),w(\hat a_i)),\,\forall (\hat a_i,\hat b_i)\in\mathcal{F}$.
Here $\underline a_0=\hat a_1$. Note that a continuous extension from right was made here at $x=a_{1,+}$
for $\underline W_1$ so that $\underline W_1\in C([a_{1,+},\hat b_1]$ if the above procedures are of infinite times.
So far, we have finished the upward extension of the original curve $\{(x,w(x)):\,x\in[\hat a_1,\hat b_1]\}$.

\vskip 6pt

Secondly, we carry out the downward extension for $\{(x,\underline W_1(x)):\,x\in[a_{1,+},\hat b_1]\}$, starting at its lowest point $(\hat b_1,\underline W_1(\hat b_1))$. The procedures are similar to those of the upward extension:

If we can find some $(\overline a,\overline b)\in\mathcal{F}$ satisfying $\underline W_1(\hat b_1)\in(w(\overline b),w(\overline a)]$, then there is a unique $\overline c\in[\overline a,\overline b)$ such that $\underline W_1(\hat b_1)=w(\overline c)>w(x),\ \forall x\in(\overline c,\overline b]$. Again, horizontally one can translate the curve $\{(x,w(x)):\,x\in[\overline c,\overline b]\}$ until its highest point $(\overline c,w(\overline c))$ overlaps the lowest point $(\hat b_1,\underline W_1(\hat b_1))$ of $\{(x,\underline W_1(x)):\,x\in[a_{1,+},\hat b_1]\}$. Denote by $\{(x,\overline W_1(x)):\ x\in[a_{1,+},\overline b_1]\}$ such an extended curve. Then, $0<\overline b_1-\hat b_1=\overline b-\overline c$ and $\overline W_1$ is a strictly decreasing continuous function on $[a_{+,1},\overline b_1]$.

If such $(\overline a,\overline b)\in\mathcal{F}$ does not exist, we then define $\overline W_1(x)=\underline W_1(x)$ on $[a_{1,+},\hat b_1]$.

Set $\overline b_0=\hat b_1$. Proceeding similarly as above, we do the possible downward extension for the curve $\{(x,\overline W_1(x)):\,x\in[a_{1,+},\overline a_1]\}$. After at most countably many times, we can obtain the curve $\{(x,W_1(x)):\ x\in[a_{1,+},b_1^+]\}$, where $b_1^+=\sup\{\overline b_0,\overline b_1,\overline b_2,\cdots\}$ satisfying $0<b_1^+-a_{1,+}\leq b_*-a_*$, and $W_1\in C([a_{1,+},b_1^+])$ enjoys the properties: $W_1(x)=\overline W_k(x),\ \forall x\in[a_{1,+},\overline b_k],\ k=0,1,2,\cdots$, $W_1$ is strictly decreasing on $[a_{1,+},b_1^+]$, and
 $$
 W_1(a_{1,+})\not\in[w(\hat b_i),w(\hat a_i)),\ \ W_1(b_1^+)\not\in(w(\hat b_i),w(\hat a_i)],\ \ \ \forall (\hat a_i,\hat b_i)\in\mathcal{F}.
 $$
This finishes the maximal extension of the curve $\{(x,w(x)):\,x\in[\hat a_1,\hat b_1]\}$.

\vskip6pt Thirdly, if $W_1$ satisfies $W_1(a_{1,+})-W_1(b_1^+)\geq w(a_*)-w(b_*)$, we stop. If $W_1(a_{1,+})-W_1(b_1^+)<w(a_*)-w(b_*)$, by \eqref{2.s3} and the extension conducted above, it is easy to see that there is an interval, say, $(\hat a_2,\hat b_2)\in\mathcal{F}$ such that $[w(\hat b_2),w(\hat a_2)]\cap [W_1(b_1^+),W_1(a_{1,+})]=\emptyset$. In such a situation, following the same procedures as before, we extend the curve $\{(x,w(x)):\,x\in[\hat a_2,\hat b_2]\}$ to a maximal one, whose function $W_2$, defined on an interval, say $[a_{2,+},b_2^+]\supset[\hat a_2,\hat b_2]$, is continuous and strictly decreasing. Moreover, $\sum_{i=1}^2(b_i^+-a_{i,+})\leq b_*-a_*$, and
 $$
 W_2(a_{2,+})\not\in[w(\hat b_i),w(\hat a_i)),\ \ W_2(b_2^+)\not\in(w(\hat b_i),w(\hat a_i)],\ \ \ \forall (\hat a_i,\hat b_i)\in\mathcal{F}.
 $$

If
 $$
 W_1(a_{1,+})-W_1(b_1^+)+W_2(a_{2,+})-W_2(b_2^+)\geq w(a_*)-w(b_*),
 $$
we stop; if
 $$
 W_1(a_{1,+})-W_1(b_1^+)+W_2(a_{2,+})-W_2(b_2^+)<w(a_*)-w(b_*),
 $$
arguing as above, we can find an interval, say, $(\hat a_3,\hat b_3)\in\mathcal{F}$ such that
 $$
 [w(\hat b_3),w(\hat a_3)]\cap \cup_{i=1}^2([W_i(b_i^+),W_i(a_{i,+})])=\emptyset,
 $$
and then we extend the curve $\{(x,w(x)):\,x\in[\hat a_3,\hat b_3]\}$ to a maximal one, whose function $W_3$, defined on an interval,
say $[a_{3,+},b_3^+]\supset[\hat a_3,\hat b_3]$, is continuous and strictly decreasing, and
 $$
 \sum_{i=1}^3(b_i^+-a_{i,+})\leq b_*-a_*,\ W_3(a_{3,+})\not\in[w(\hat b_i),w(\hat a_i)),\ W_3(b_3^+)\not\in(w(\hat b_i),w(\hat a_i)],\ \forall (\hat a_i,\hat b_i)\in\mathcal{F}.
 $$

Up to at most countably many times, we obtain the function sequence $\{W_i:\ i\in\mathbb{E}\}$ with $\mathbb{E}$
being a given set consisting of at most countably many integers, which satisfies

 \begin{itemize}
 \item[(p1)] Each $W_i$ is continuous, and strictly decreasing on its domain $[a_{i,+},b_i^+]$ with
 $\sum_{i\in\mathbb{E}}(b_i^+-a_{i,+})\leq b_*-a_*$, and is $C^1([a_{i,+},b_i^+]\setminus\mathcal{O}_i)$ with each
 $\mathcal{O}_i$ containing at most countably many points;

 \item[(p2)] $(W_i(b_i^+),W_i(a_{i,+}))\cap(W_j(b_j^+),W_j(a_{j,+}))=\emptyset,\,\forall i\not=j$, and
 $W_j(a_{j,+})\not\in[w(\hat b_i),w(\hat a_i)),\ \, W_j(b_j^+)\not\in(w(\hat b_i),w(\hat a_i)],\ \, \forall j\in\mathbb{E},\, \forall (\hat a_i,\hat b_i)\in\mathcal{F}$;

 \item[(p3)] For any $i\in\mathbb{N}$ and any $x\in(\hat a_i,\hat b_i)\in\mathcal{F}$, there is a $W_j$ such that $w(x)\in [W_j(b_j^+),W_j(a_{j,+})]$;

 \item[(p4)] $\sum_{i\in\mathbb{E}}\int_{a_{i,+}}^{b_{i,+}}|W_i'(x)|^2dx\leq\sum_{i\in\mathbb{N}}\int_{\hat a_i}^{\hat b_i}|w'(x)|^2dx\leq\int_{a_*}^{b_*}|w'(x)|^2dx$.

 \end{itemize}
Hence, from (p3) and \eqref{2.s3}, it immediately follows that
 \bes
 \label{2.s4}
 \Big|\bigcup_{i\in\mathbb{E}}[W_i(b_i^+),W_i(a_{i,+})]\Big|\geq\Big|\bigcup_{i\in\mathbb{N}}(w(\hat b_i),w(\hat a_i))\Big|\geq w(a_*)-w(b_*).
 \ees

\vskip6pt
Finally, in view of (p1)-(p4), through at most countably many times of translation transformations
(including possible horizontal and vertical translations) over the curve sequence $\{(x,W_i(x)):\ x\in[a_{i,+},b_i^+],\ i\in\mathbb{E}\}$,
we can get a function $W$, which has the following properties:
 \begin{itemize}
 \item $W$ is strictly decreasing on its domain $[a_\infty,b_\infty]$ with $0<b_\infty-a_\infty\leq b_*-a_*$, is continuous at $a_\infty$ and $b_\infty$,
 and is $C^1([a_\infty,b_\infty]\setminus\mathcal{O})$ with $\mathcal{O}$ containing at most countably many points;

 \item $\Big|\bigcup_{i\in\mathbb{E}}[W_i(b_i^+),W_i(a_{i,+})]\Big|=W(a_\infty)-W(b_\infty)$, which, combined with \eqref{2.s4},
 implies that $W(a_\infty)-W(b_\infty)\geq w(a_*)-w(b_*)$;

 \item $\int_{a_\infty}^{b_\infty}|W'(x)|^2dx=\sum_{i\in\mathbb{E}}\int_{a_{i,+}}^{b_{i,+}}|W_i'(x)|^2dx
     \leq\sum_{i\in\mathbb{N}}\int_{\hat a_i}^{\hat b_i}|w'(x)|^2dx\leq\int_{a_*}^{b_*}|w'(x)|^2dx$.

 \end{itemize}

\vskip6pt {\it Step 3.} We use the same notation as in step 2. Based on what was proved by step 2,
together with the fact of $m'\geq0$ and $w'(x)\leq0$ on each $[\hat a_i,\hat b_i]\subset[0,1],\,\forall i\in\mathbb{N}$, we deduce from \eqref{1.4} that
 \bes
 \left.\begin{array}{lll}
 \medskip
 \displaystyle
 \max_{[0,1]}|c(x)|\geq\lambda^*&\geq&
 \medskip
 \displaystyle
 \int_{a_*}^{b_*}(w'(x)-sw(x)m'(x))^2dx-\max_{[0,1]}|c(x)|\\
 \medskip
 &\geq&
 \displaystyle
 \sum_{i\in\mathbb{N}}\int_{\hat a_i}^{\hat b_i}|w'(x)|^2dx-\max_{[0,1]}|c(x)|\geq
 \int_{a_\infty}^{b_\infty}|W'(x)|^2dx-\max_{[0,1]}|c(x)|.
 \end{array}
 \right.
 \label{2.6}
 \ees

We then aim to estimate the integral $\int_{a_\infty}^{b_\infty}|W'(x)|^2dx$. It is well known that $H^1((a_\infty,b_\infty))$ is compactly embedded into $C([a_\infty,b_\infty])$.
Let us consider the minimizer of the functional
 \bes
 \int_{a_\infty}^{b_\infty}|u'(x)|^2dx,\ \ \ \forall u\in H^1(a_\infty,b_\infty),
 \nonumber
 \ees
under the constrained conditions $u(a_\infty)=W(a_\infty),\, u(b_\infty)=W(b_\infty)$.
It is easy to check that the minimizer of such a functional is attainable and its minimal $u_0$ is a
solution of the following ODE problem with two-point boundary values
 \bes
 u_0''=0,\ \ \ x\in(a_\infty,b_\infty);\ \ \
 u_0(a_\infty)=W(a_\infty),\ \ u_0(b_\infty)=W(b_\infty).
 \nonumber
 \ees
Thus, $u_0$ is the segment connecting the two endpoints $a_\infty$ and $b_\infty$. So we have
 $$
 \int_{a_\infty}^{b_\infty}|W'(x)|^2dx\geq\int_{a_\infty}^{b_\infty}|u_0'(x)|^2dx={{|W(a_\infty)-W(b_\infty)|^2}\over{b_\infty-a_\infty}}.
 $$
Recall that $0<b_\infty-a_\infty\leq b_*-a_*\leq1$ and $W(a_\infty)-W(b_\infty)\geq w(a_*)-w(b_*)$. As a consequence,
this, together with \eqref{2.s1} and \eqref{2.6}, yields
 \bes
 2\max_{[0,1]}|c(x)|&\geq&
 \displaystyle
 {{|w(a_*)-w(b_*)|^2}\over{b_*-a_*}}\geq{1\over2}\Big(M-{1\over M}\Big)^2,
 \nonumber
 \ees
which leads to an obvious contraction due to the choice of $M$. Therefore, the claim \eqref{claim1} is proved.

As $m$ is strictly increasing on $[a_1,a_2]$, we know that
$m'(x)\geq0$ on $[a_1,a_2]$. Set $\mathcal{C}=\{x\in[a_1,a_2]:\ m'(x)=0\}$. In the sequel, we need consider two different cases:
Case {\bf A}:\ $|\mathcal{C}|=0$;  Case {\bf B}:\ $|\mathcal{C}|>0$.

We first treat Case {\bf A}:\ $|\mathcal{C}|=0$. For any small $\delta>0$, we take $\zeta(x)=1$
on $[a_1+\delta,b_i-\delta]$, $\zeta(x)=0$ on $[0,a_1+{1\over2}\delta]\cup[b_i-{1\over2}\delta,1]$ and $0\leq \zeta(x)\leq1$ on
$[a_1+{1\over2}\delta,a_1+\delta]\cup[b_i-\delta,b_i-{1\over2}\delta]$ so that $\zeta\in C([0,1])$. By Lemma \ref{l2.3}, we know that $w(s_j,\cdot)\to0$
a.e. in $\{m'(x)>0\}$. Hence, combined with this fact, $|\mathcal{C}|=0$ and the Lebesgue's dominated convergence theorem,
we can easily conclude from \eqref{1.7} and \eqref{claim1} that
 \bes
 \left.\begin{array}{l}
 \displaystyle
 \mu([a_1+\delta,b_i-\delta])\leq\lim\limits_{j\to\infty}\int_{(a_1+\delta/2,b_i-\delta/2)\cap\mathcal{C}}w^2(s_j,x)dx
 +\lim\limits_{j\to\infty}\int_{\{m'(x)>0\}}w^2(s_j,x)dx=0,
 \end{array}
 \right.
 \nonumber
 \ees
which therefore gives $\mu((a_1,b_i))=0,\,\forall i\geq1$ due to the arbitrariness of $\delta$, and in turn $\mu((a_1,a_2))=0$ as $b_i\to a_2$.

\vskip3pt We next consider Case {\bf B}:\ $|\mathcal{C}|>0$. Recall that $m$ is strictly increasing on $[a_1,a_2]$. Thus,
without loss of generality, we may assume that $m'(b_i)>0,\,\forall i\geq1$. In what follows, we fix $i\geq1$. Then $m'(x)>0$
on $(b_i-\epsilon_0/2,b_i)$ for some small $\epsilon_0>0$.

Set
 $$
 \mathcal{A}=\{x\in(0,1):\ m'(x)>0\}.
 $$
Since $\mathcal{A}$ is a bounded open set, it is a union of at most
countably many disjoint open intervals. So we can find a sequence of closed sets, say
$\{\mathcal{A}_k\}_{k=1}^{\infty}$ such that $\mathcal{A}_k\subset\mathcal{A}$,
$\mathcal{A}_k\subset\mathcal{A}_{k+1},\,k\geq 1$, $\bigcup_{k=1}^{\infty}\mathcal{A}_k=\mathcal{A}$.
Hence, given $k\geq 1$, for any small $\delta=\delta(k)>0$, there holds
 \bes
 \label{cl4}
 \mbox{$w(s_j,x)\leq\delta$ for all $x\in\mathcal{A}_k$ and for all large $j$ (due to Lemma \ref{l2.3}).}
 \ees

We now assert that for any given $0<\epsilon\leq\epsilon_0/2$, there is an integer $k_0$ such that
 \bes
 \label{claim2}
 d(x,\mathcal{A}_k\cap(x,b_i))<\epsilon,\ \ \forall x\in\{y\in[0,1]:\ m'(y)=0\}\cap(a_1,b_i),\ \ \mbox{for all}\ k\geq k_0.
 \ees
Here, $d(x,\mathcal{A}_k\cap(x,b_i))$ stands for the usual distance between the point $x$
and the set $\mathcal{A}_k\cap(x,b_i)$. Indeed, if \eqref{claim2} does not hold,
there is a subsequence $\{k_l\}_{l=1}^\infty$ with $k_l\to \infty$ as $l\to\infty$
and a point sequence $x_l\in\{y\in[0,1]:\ m'(y)=0\}\cap(a_1,b_i)$ such that
 \bes
 \nonumber
 \mbox{$d(x_l,\mathcal{A}_{_{k_l}}\cap(x_l,b_i))\geq\epsilon_1,\,\forall l\geq1$\ \ for some\ $0<\epsilon_1\leq\epsilon_0/2$}.
 \ees
If $\mathcal{A}_{_{k_l}}\cap(x_l,b_i)=\emptyset$ for some $l$, we define $d(x_l,\mathcal{A}_{_{k_l}}\cap(x_l,b_i))=\infty$.
As $x_l\in\{y\in[0,1]:\ m'(y)=0\}\cap(a_1,b_i)$ and $m'(x)>0$
on $(b_i-\epsilon_0/2,b_i)$, it is clear that $x_l\leq b_i-\epsilon_0$. Passing up to a subsequence,
we assume that $x_l\to x^*$ and so $x^*\leq b_i-\epsilon_0$.
Moreover, in view of the fact that $\mathcal{A}_{_{k_l}}\subset\mathcal{A}_{_{k_{l+1}}},\,\forall l\geq 1$ and
$\bigcup_{l=1}^{\infty}\mathcal{A}_{_{k_l}}=\mathcal{A}$,  by sending $l\to\infty$ it easily follows that
 $$
 d(x^*,\mathcal{A}\cap(x^*,b_i))\geq\epsilon_1,
 $$
which immediately implies that $m$ is constant on $[x^*,x^*+\epsilon_1]$,
an obvious contradiction! Hence, \eqref{claim2} holds.

We then conclude that
 \bes
 \label{claim3}
 \limsup_{j\to\infty}\|w(s_j,\cdot)\|_{L^\infty((a_1,b_i)\setminus\mathcal{A})}:=m^*=0.
 \ees
Once the assertion \eqref{claim3} holds, the argument in Case {\bf A} can be easily adapted to
show that $\mu((a_1,b_i))=0,\,\forall i\geq1$ and in turn $\mu((a_1,a_2))=0$.

It remains to prove \eqref{claim3}. Suppose that $m^*>0$. Then we can find a sequence $x_j\in(a_1,b_i)\cap\{m'(x)=0\})$
such that $w(s_j,x_j)\geq m^*/2,\,\forall j\geq j_0$ for some large $j_0$. On the other hand,
by taking $\delta=m^*/4$ and $\epsilon=\min\{{1\over 2}\epsilon_0,\ {{(m^*)^2}\over{64\max_{[0,1]}|c(x)|}}\}$
in \eqref{cl4} and \eqref{claim2}, respectively, we see that there exists $y_j$  with $0<y_j-x_j<\epsilon$ such that $w(s_j,y_j)<m^*/4$
for all $j\geq j_0$ by requiring $j_0$ to be larger if necessary.

Given $j\geq j_0$, similarly to the proof of the claim \eqref{claim1}, there exists a function $W$ satisfying
 \begin{itemize}
 \item $W$ is strictly decreasing on its domain $[a_\infty,b_\infty]$ with $0<b_\infty-a_\infty\leq y_j-x_j<\epsilon$, is continuous at $a_\infty$ and $b_\infty$,
 and is $C^1([a_\infty,b_\infty]\setminus\mathcal{O})$ with $\mathcal{O}$ containing at most countably many points;

 \item $W(b_\infty)-W(a_\infty)\geq w(s_j,x_j)-w(s_j,y_j)>m^*/4>0$;

 \item $\int_{a_\infty}^{b_\infty}|W'(x)|^2dx\leq\sum_{i\in\mathbb{N}}\int_{\hat a_i}^{\hat b_i}|w'(s_j,x)|^2dx$, where
$\bigcup_{i\in\mathbb{N}}(\hat a_i,\hat b_i)=\{x\in(x_j,y_j): \ w'(s_j,x)<0\}$, with $\mathbb{N}$ being a given
set consisting of at most countably many integers, $(\hat a_i,\hat b_i)\cap(\hat a_j,\hat b_j)=\emptyset,\,\forall i\not=j$.

 \end{itemize}
As $m'(x)\geq0$ on $[x_j,y_j]$ and $w'(s_j,x)\leq0$ on each $[\hat a_i,\hat b_i]$, the same analysis as in step 3 gives
 \bes
 \left.\begin{array}{lll}
 \medskip
 \displaystyle
 2\max_{[0,1]}|c(x)|&\geq&
 \medskip
 \displaystyle
 \int_0^1\{[w'(s_j,x)-s_jw(s_j,x)m'(x)]^2\}dx\\
 \medskip
 &\geq&
 \displaystyle
 \sum_{i\in\mathbb{N}}\int_{\hat a_i}^{\hat b_i}|w'(s_j,x)|^2dx\geq\int_{a_\infty}^{b_\infty}|W'(x)|^2dx\geq{{|W(b_\infty)-W(a_\infty)|^2}\over{b_\infty-a_\infty}}\\
 \medskip
 &\geq&
 \displaystyle
 {{|w(s_j,x_j)-w(s_j,y_j)|^2}\over{y_j-x_j}}\geq{{(m^*)^2}\over{16\epsilon}},
 \end{array}
 \right.
 \nonumber
 \ees
which is a contradiction because of the choice of $\epsilon$, and \eqref{claim3} is thus verified. So we have proved that $\mu((a_1,a_2))=0$.
This completes the proof of Lemma \ref{l2.4}.

\end{proof}

In fact, we can further show that in Lemma \ref{l2.4}, $\mu(\{a_1\})=0$ when $m$ is strictly increasing and $\mu(\{a_2\})=0$ when $m$ is strictly decreasing.
In order to cover other general cases, we next consider an interval $[a_1,a_2]\subset[0,1]$
on which $m$ is non-decreasing and $\{x\in[0,1]:\ m'(x)>0\}\cap(a_1,a_2)\not=\emptyset$. Without loss of generality, we assume that
$(a_1,a_1+\epsilon)\cap\{x\in[0,1]:\ m'(x)>0\}\not=\emptyset$ and $(a_2-\epsilon,a_2)\cap\{x\in[0,1]:\ m'(x)>0\}\not=\emptyset,\,\forall \epsilon>0$.
Denote by $\bigcup_{l=1}^{L}(c_l,d_l)$ the set of intervals where $m$ is constant on each $[c_l,d_l]\subset(a_1,a_2)$,
and $[c_l,d_l]\cap[c_j,d_j]=\emptyset,\,\forall l\not=j$,
where $L\geq0$ is a finite integer or equal to $\infty$. Here, we make a convention that $L=0$ means that $m$ is strictly increasing on $[a_1,a_2]$.
Then we are able to state

\begin{lem}\label{l2.4a} $\mu([a_1,a_2)\setminus\bigcup_{l=1}^{L}(c_l,d_l))=0$,
and if additionally $0<a_2<1$ and $m'(x)\geq0$ on $[a_2,a_2+\epsilon_0]$ for some small $\epsilon_0>0$, then
$\mu([a_1,a_2]\setminus\bigcup_{l=1}^{L}(c_l,d_l))=0$.

\end{lem}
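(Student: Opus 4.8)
The plan is to follow the proof of Lemma~\ref{l2.4}, the new difficulty being that the zero set of $m'$ in $[a_1,a_2]$ now has nonempty interior --- the flat pieces $(c_l,d_l)$, on which $w(s_j,\cdot)$ need not decay --- so the argument must be organised so that only the \emph{nowhere-flat} part of $\{m'=0\}$ is shown to be $\mu$-negligible, the flat-piece endpoints being dealt with separately. As usual we may assume $c\ge0$ on $[0,1]$, and we fix a sequence $b_i\uparrow a_2$ with $m'(b_i)>0$ for each $i$, possible since $\{m'>0\}$ accumulates at $a_2$ from the left.

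First I would establish $\|w(s_j,\cdot)\|_{L^\infty(a_1,b_i)}\le M:=2\bigl(1+\max_{[0,1]}|c|\bigr)^{1/2}$ by repeating Steps~1--3 of the proof of \eqref{claim1} in Lemma~\ref{l2.4} verbatim: that argument uses only that $m$ is non-decreasing on $[a_1,a_2]$ (so that, since $m'\ge0$, the integrand in the Rayleigh quotient \eqref{1.4} dominates $(w')^2$ on the subintervals where $w$ decreases, which drives the descending-curve rearrangement), together with a point $y_0\in(b_i,a_2)$ with $m'>0$ on a neighbourhood, hence $w(s_j,y_0)\to0$ by Lemma~\ref{l2.3}; strict monotonicity is never used. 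An immediate consequence, via $\mu((x-\eta,x+\eta))\le\liminf_j\int_{x-\eta}^{x+\eta}w^2(s_j,\cdot)\,dx\le 2M^2\eta$, is that $\mu$ has no atoms in $[a_1,a_2)$. I would then split $E:=[a_1,a_2)\setminus\bigcup_l(c_l,d_l)$ as the union of $\{a_1\}$, the at most countably many endpoints $\{c_l,d_l\}$, the set $E\cap\{m'>0\}$, and $S:=\bigl([a_1,a_2)\cap\{m'=0\}\bigr)\setminus\bigcup_l[c_l,d_l)$. By Lemma~\ref{l2.2}, $\mu(E\cap\{m'>0\})=0$; by the absence of atoms, $\mu(\{a_1\})=\mu(\{c_l\})=\mu(\{d_l\})=0$. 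Hence the first assertion reduces to $\mu(S)=0$. The crucial feature of $S$ is that \emph{every} $x\in S$ has $m$ non-constant on each right neighbourhood $(x,x+\eta)$ --- this fails precisely at the $c_l$, which is why they were removed --- so $\{m'>0\}$ accumulates at $x$ from the right; also $S$ has empty interior, an interval inside it being a flat piece.

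Next I would show $w(s_j,\cdot)\to0$ uniformly on $S\cap[a_1,b_i]$, following \eqref{claim2}--\eqref{claim3} of Lemma~\ref{l2.4}. Writing $\mathcal A=\{x\in(0,1):m'(x)>0\}=\bigcup_k\mathcal A_k$ with $\mathcal A_k$ closed and increasing, the first task is the uniform reachability statement: for every $\epsilon>0$ there is $k_0$ with $d\bigl(x,\mathcal A_k\cap(x,b_i)\bigr)<\epsilon$ for all $x\in S\cap[a_1,b_i]$ and $k\ge k_0$. Arguing by contradiction as in \eqref{claim2} produces $x_l\to x^*$ with $m$ constant on $[x^*,x^*+\epsilon_1]$; then $(x^*,x^*+\epsilon_1)$ lies inside some maximal flat piece $(c_l,d_l)$, so $x^*$ is one of the $c_l$ (the case $x^*\in(c_l,d_l)$ is excluded because then $x_l\in(c_l,d_l)$ eventually, impossible for $x_l\in S$), and for $x_l<c_l$ close to $c_l$ maximality forces $m$ non-constant on $(x_l,c_l)$, hence $\mathcal A\cap(x_l,c_l)\ne\emptyset$ and $d(x_l,\mathcal A\cap(x_l,b_i))\le c_l-x_l\to0$, a contradiction. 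Granting this, I claim $m^*:=\limsup_j\|w(s_j,\cdot)\|_{L^\infty(S\cap[a_1,b_i])}=0$: otherwise, along a subsequence, pick $x_j\in S\cap[a_1,b_i]$ with $w(s_j,x_j)\ge m^*/2$ and, by reachability and \eqref{cl4}, $y_j\in\mathcal A_k\cap(x_j,x_j+\epsilon)$ with $w(s_j,y_j)\le m^*/4$ for large $j$; since $m'\ge0$ on $[x_j,y_j]$, the energy estimate of Step~3 of Lemma~\ref{l2.4} gives $2\max_{[0,1]}|c|\ge|w(s_j,x_j)-w(s_j,y_j)|^2/(y_j-x_j)\ge(m^*)^2/(16\epsilon)$, impossible once $\epsilon$ is small.

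Finally I would deduce $\mu(S)=0$ as in Cases~{\bf A}--{\bf B} of Lemma~\ref{l2.4}: for each $\rho>0$ take $\zeta_\rho\in C([0,1])$ with $0\le\zeta_\rho\le1$, $\zeta_\rho\equiv1$ on $S\cap[a_1,b_i]$ and $\zeta_\rho$ supported in the $\rho$-neighbourhood of that set, so that $\mu(S\cap[a_1,b_i])\le\int\zeta_\rho\,d\mu=\lim_j\int\zeta_\rho w^2(s_j,\cdot)\,dx$ by \eqref{1.7}; splitting this integral over $\{m'>0\}$ (where $w\to0$ a.e.\ by Lemma~\ref{l2.3}, dominated by $M$), over $S\cap[a_1,b_i]$ (where $w\to0$ uniformly), and over the part that leaks into the flat pieces (whose total length tends to $0$ as $\rho\to0$ because $\sum_l(d_l-c_l)<\infty$), one gets $\mu(S\cap[a_1,b_i])\le M^2\cdot o_\rho(1)$, hence $\mu(S\cap[a_1,b_i])=0$, and letting $i\to\infty$ gives $\mu(S)=0$ and so $\mu(E)=0$. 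For the second assertion, the hypothesis $m'\ge0$ on $[a_2,a_2+\epsilon_0]$ allows the same $L^\infty$ bound near $a_2$ (using a point to the right of $a_2$ with $m'>0$, or, if $m$ is flat just to the right of $a_2$, the non-singular equation $-w''+(c-\lambda)w=0$ there), whence $\mu(\{a_2\})=0$, which with $\mu(E)=0$ yields $\mu([a_1,a_2]\setminus\bigcup_l(c_l,d_l))=0$. I expect the uniform reachability statement to be the main obstacle: in Lemma~\ref{l2.4} it followed at once from strict monotonicity (``$m$ constant on an interval'' being absurd there), whereas here one must use the maximality of the flat pieces to pin $x^*$ on a flat-piece boundary and then squeeze $\mathcal A$ in from the left; the accompanying bookkeeping --- which zeros of $m'$ are ``routable to the right'' (all of $S$) and which are not (the $c_l$, handled by non-atomicity) --- is the real content beyond Lemma~\ref{l2.4}.
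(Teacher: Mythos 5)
Your decomposition of $E=[a_1,a_2)\setminus\bigcup_l(c_l,d_l)$ into $\{a_1\}$, the endpoints $\{c_l,d_l\}$, $E\cap\{m'>0\}$ and $S=\bigl([a_1,a_2)\cap\{m'=0\}\bigr)\setminus\bigcup_l[c_l,d_l)$, together with the $L^\infty$ bound, non-atomicity and Lemma~\ref{l2.2}, is a sound reorganisation, and you have correctly located the heart of the matter in the reachability statement. The difficulty is that the \emph{uniform} reachability you claim on all of $S$ --- ``for every $\epsilon>0$ there is $k_0$ with $d(x,\mathcal A_k\cap(x,b_i))<\epsilon$ for all $x\in S\cap[a_1,b_i]$, $k\ge k_0$'' --- is in general false, and the contradiction you run does not close. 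After passing to the limit you obtain $d(x^*,\mathcal A\cap(x^*,b_i))\ge\epsilon_1$ and pin $x^*=c_j$ for some flat piece $[c_j,d_j]$. You then observe that for $x_l<c_j$, maximality gives $\mathcal A\cap(x_l,c_j)\ne\emptyset$, hence $d(x_l,\mathcal A\cap(x_l,b_i))\le c_j-x_l\to0$, and declare a contradiction. But the hypothesis you are contradicting was $d(x_l,\mathcal A_{k_l}\cap(x_l,b_i))\ge\epsilon_1$, and since $\mathcal A\supset\mathcal A_{k_l}$, the inequality $d(x_l,\mathcal A\cap(x_l,b_i))\to0$ is perfectly compatible with it. Worse, it is also compatible with $d(x^*,\mathcal A\cap(x^*,b_i))\ge\epsilon_1$: the function $x\mapsto d(x,\mathcal A\cap(x,b_i))$ jumps upward at $x^*=c_j$ (from $0$ up to at least $d_j-c_j$). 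Concretely, if the connected components $(p_n,q_n)$ of $\mathcal A$ accumulating at $c_j$ from the left have widths $q_n-p_n\to0$, then every closed $\mathcal A_k\subset\mathcal A$ satisfies $\operatorname{dist}(\mathcal A_k,\partial\mathcal A)>0$ and therefore misses all but finitely many $(p_n,q_n)$; taking $x_l=p_{n_l}\in S$ with $n_l$ large relative to $k_l$ gives $\mathcal A_{k_l}\cap(x_l,c_j)=\emptyset$, so $d(x_l,\mathcal A_{k_l}\cap(x_l,b_i))\ge d_j-x_l$, which stays $\ge\epsilon_1$ whenever $d_j-c_j\ge\epsilon_1$. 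So $S$ is \emph{not} uniformly routable to the right; the points just to the left of the left endpoint of a long flat piece are exactly the problem, and removing only $\{c_l\}$ itself by non-atomicity does not make it go away.

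This is precisely why the paper does not attempt uniform reachability on all of the degenerate set. Instead it truncates: it removes the finitely many ``large'' flat pieces $[c_l,d_l]$, $l\le N_0$ (with $\sum_{l>N_0}|d_l-c_l|<\epsilon$), works on $\mathcal B(\epsilon)\cap\bigcup_{l}(e_l,f_l-\epsilon/N^*)$ --- i.e. also cuts away an $\epsilon/N^*$-collar at the \emph{right} end of each gap $(e_l,f_l)$, where $\mathcal A$ is known to accumulate --- and proves only the weaker bound $d(x,\mathcal A_k\cap(x,f_l))\le(1+\frac1{N^*})\epsilon$. The slack $\frac\epsilon{N^*}$ is exactly what lets one hop over any surviving small flat piece (all of which have length $<\epsilon$), and the right collar is what turns the ``$m$ constant up to $f_l$'' alternative into a contradiction with $\mathcal A$ accumulating at $f_l$. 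The leftover mass --- the $\sum_l[f_l-\epsilon/N^*,f_l)$ and the small flat pieces --- is then controlled by the $L^\infty$ bound and the fact that the total length of the small flat pieces is $<\epsilon$, yielding $\mu\le 4\epsilon^{2/3}+2M^2\epsilon$. Your final ``leakage'' bookkeeping using $\sum_l(d_l-c_l)<\infty$ is fine as far as it goes, but without a correct substitute for the reachability estimate the uniform decay $w(s_j,\cdot)\to0$ on $S\cap[a_1,b_i]$ that you feed into it is unproved, so the argument as written has a genuine gap. To repair it along your lines you would have to build the truncation into the reachability claim essentially as the paper does.
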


\begin{proof} We first prove
 \bes
 \label{l2.4a-1}
 \mu((a_1,a_2)\setminus\bigcup_{l=1}^{L}[c_l,d_l])=0.
 \ees
When $L<\infty$, the proof is the same as in Lemma \ref{l2.4}. It remains to consider $L=\infty$.
According to our assumption, there exists a sequence $\{z_i\}_{i\geq1}$
with $a_1<z_i<a_2$ and $z_i\to a_2$ as $i\to\infty$ such that $m'(z_i)>0$ for each $i\geq1$. Thus, it is sufficient to show that
 \bes
 \label{l2.4a-2}
 \mu((a_1,z_i)\setminus\bigcup_{l=1}^{\infty}[c_l,d_l])=0\ \  \mbox{for any given $i\geq1$.}
 \ees
So from now on, we always fix $i$.

Note that $\{x\in[0,1]:\ m'(x)>0\}\cap(z_i,a_2)\not=\emptyset$. Then Lemma \ref{l2.3} implies that $w(s_j,\hat x)\to0$ for some $\hat x\in(z_i,a_2)$.
In view of this fact, one can appeal to the similar argument as in proving \eqref{claim1} to conclude that
 \bes
 \mbox{$\|w(s_j,\cdot)\|_{L^\infty(a_1,z_i)}\leq M$ for some positive constant $M$, independent of $j\geq1$.}
 \label{claim5}
 \ees

We use the same notation $\mathcal{A},\,\mathcal{A}_k,\,k\geq1$ as in the proof of Lemma \ref{l2.4}.
Given $\delta=\delta(k)$, \eqref{cl4} there remains true. On the other hand, for any given small $\epsilon>0$, we have
 \bes
 \sum_{l=N_0+1}^\infty|c_l-d_l|<\epsilon\ \ \ \mbox{for some large}\ N_0=N_0(\epsilon).
 \label{claim6}
 \ees
Furthermore, the open set $(a_1,z_i)\setminus\bigcup_{l=1}^{N_0}[c_l,d_l])$ consists of $N^*=N_0+1$ open intervals, denoted by
$\bigcup_{l=1}^{N^*}(e_l,f_l))$, with each $(e_l,f_l)\subset(a_1,z_i)$ and $[e_l,f_l]\cap[e_j,f_j]=\emptyset,\,\forall l\not=j$. Clearly,
$\mathcal{A}\cap(f_l-\rho,f_l)\not=\emptyset,\,\forall 1\leq l\leq N^*$ for any small $\rho>0$.

For simplicity, denote
 $$
 \mathcal{B}(\epsilon)=(a_1,z_i)\setminus\Big(\mathcal{A}\cup\bigcup_{l=1}^{N_0(\epsilon)}[c_l,d_l]\Big).
 $$
For the above $\epsilon$, we then claim that, given $1\leq l\leq N^*$, there is a large $k^l=k^l(\epsilon)$, such that
 \bes
 \label{claim7}
 d(x,\mathcal{A}_k\cap(x,f_l))\leq\Big(1+{1\over{N^*}}\Big)\epsilon,\ \ \forall x\in\mathcal{B}\cap(e_l,f_l-\epsilon/{N^*}),\ \ \mbox{for all}\ k\geq k^l.
 \ees
On the contrary, suppose that \eqref{claim7} is invalid. Then for some $1\leq l_0\leq N^*$, there is a point sequence
$x_{_{k_\eta}}\in\mathcal{B}\cap(e_{l_0},f_{l_0}-\epsilon/{N^*})$, such that
 \bes
 \nonumber
 d(x_{_{k_\eta}},\mathcal{A}_{_{k_\eta}}\cap(x_{_{k_\eta}},f_{l_0}))>\Big(1+{1\over{N^*}}\Big)\epsilon,\ \ \ \mbox{for all}\ \eta\geq 1.
 \ees
We may assume that $x_{_{k_\eta}}\to x^*\in\overline{\mathcal{B}}\cap[e_{l_0},f_{l_0}-\epsilon/{N^*}]$.
As in the proof of Lemma \ref{l2.4}, we have by sending $\eta\to\infty$ that
 $$
 d(x^*,\mathcal{A}\cap(x^*,f_{l_0}))\geq\Big(1+{1\over{N^*}}\Big)\epsilon.
 $$

If $d(x^*,\mathcal{A}\cap(x^*,f_{l_0}))=\infty$, obviously $m$ is constant on $[x^*,f_{l_0}]$, contradicting against the fact
that $\mathcal{A}\cap(f_l-\rho,f_l)\not=\emptyset,\,\forall 1\leq l\leq N^*$ for any small $\rho>0$.
If $(1+{1\over{N^*}})\epsilon\leq d(x^*,\mathcal{A}\cap(x^*,f_{l_0}))<\infty$, then it is necessary that
$m$ is constant on $(x^*,x^*+(1+{1\over{N^*}})\epsilon)\subset[e_{l_0},f_{l_0}]$.
This is also impossible because \eqref{claim6} and the definition of $[e_{l_0},f_{l_0}]$
already imply that $f_{l_0}-e_{l_0}<\epsilon$.

Next we are going to show
 \bes
 \label{claim8}
  \limsup_{\epsilon\to0}\limsup_{j\to\infty}\Big(\epsilon^{-{1\over3}}\|w(s_j,\cdot)\|_{L^\infty(\mathcal{B}\cap(\bigcup_{l=1}^{N^*}(e_l,f_l-\epsilon/{N^*}))}\Big)< 2.
 \ees
Otherwise, by means of \eqref{claim7} and \eqref{cl4}, we can find a sequence $\epsilon_{_{j_\kappa}}$ satisfying $\epsilon_{_{j_\kappa}}\to0$ as $\kappa\to\infty$ and two points sequences $x_{_{j_\kappa}}\in\mathcal{B}(\epsilon_{_{j_\kappa}})\cap(\bigcup_{l=1}^{N^*(\epsilon_{_{j_\kappa}})}(e_l(\epsilon_{_{j_\kappa}}),f_l(\epsilon_{_{j_\kappa}})-\epsilon_{_{j_\kappa}}/{N^*(\epsilon_{_{j_\kappa}})})$ and
$y_{_{j_\kappa}}\in\mathcal{A}$ satisfying $0<y_{_{j_\kappa}}-x_{_{j_\kappa}}<(1+{1\over{N^*(\epsilon_{_{j_\kappa}})}})\epsilon_{_{j_\kappa}}\leq2\epsilon_{_{j_\kappa}}$ such that
$w(s_{_{j_\kappa}},x_{_{j_\kappa}})>(\epsilon_{_{j_\kappa}})^{1/3}$ and $w(s_{_{j_\kappa}},y_{_{j_\kappa}})<\epsilon_{_{j_\kappa}},\,\forall \kappa\geq1$.

Given large $\kappa$, by the argument similar to that of deriving the claim \eqref{claim1}, we can find a function $W$ such that
 \begin{itemize}
 \item $W$ is strictly decreasing on its domain $[a_\infty,b_\infty]$ with
 $0<b_\infty-a_\infty\leq y_{_{j_\kappa}}-x_{_{j_\kappa}}\leq2\epsilon_{_{j_\kappa}}$, is continuous at $a_\infty$ and $b_\infty$,
 and is $C^1([a_\infty,b_\infty]\setminus\mathcal{O})$ with $\mathcal{O}$ containing at most countably many points;

 \item $W(b_\infty)-W(a_\infty)\geq w(s_{_{j_\kappa}},x_{_{j_\kappa}})-w(s_{_{j_\kappa}},y_{_{j_\kappa}})>(\epsilon_{_{j_\kappa}})^{1/3}-\epsilon_{_{j_\kappa}}>0$;

 \item $\int_{a_\infty}^{b_\infty}|W'(x)|^2dx\leq\sum_{i\in\mathbb{N}}\int_{\hat a_i}^{\hat b_i}|w'(s_j,x)|^2dx$, where
$\bigcup_{i\in\mathbb{N}}(\hat a_i,\hat b_i)=\{x\in(x_{_{j_\kappa}},y_{_{j_\kappa}}): \ w'(s_{_{j_\kappa}},x)<0\}$, with $\mathbb{N}$ being a given
set consisting of at most countably many integers, $(\hat a_i,\hat b_i)\cap(\hat a_j,\hat b_j)=\emptyset,\,\forall i\not=j$.

 \end{itemize}
Using $m'(x)\geq0$ on $[x_{_{j_\kappa}},y_{_{j_\kappa}}]$ and $w'(s_{_{j_\kappa}},x)\leq0$ on each $[\hat a_i,\hat b_i]$, the same analysis
as in step 3 of the proof of Lemma \ref{l2.4} deduces
 \bes
 2\max_{[0,1]}|c(x)|\geq {{|w(s_{_{j_\kappa}},y_{_{j_\kappa}})-w(s_{_{j_\kappa}},x_{_{j_\kappa}})|^2}\over{y_{_{j_\kappa}}-x_{_{j_\kappa}}}}
 \geq{{[(\epsilon_{_{j_\kappa}})^{1/3}-\epsilon_{_{j_\kappa}}]^2}\over{2\epsilon_{_{j_\kappa}}}}\to\infty,
 \nonumber
 \ees
as $\kappa\to\infty$. This contradiction yields \eqref{claim8}.

We recall that Lemma \ref{l2.3} implies $\mu(\mathcal{A}_k)=0,\forall k\geq1$
and therefore $\mu(\mathcal{A})=0$. Combing this fact, \eqref{claim5}, \eqref{claim6} and \eqref{claim8},
from \eqref{1.7} it is not hard to see that
 \bes
 \left.\begin{array}{lll}
 \medskip
 \displaystyle
 \mu((a_1,z_i)\setminus\bigcup_{l=1}^{L}[c_l,d_l])&\leq&
 \medskip
 \displaystyle
 \mu((a_1,z_i)\setminus\bigcup_{l=1}^{N_0}[c_l,d_l])=\mu(\mathcal{B}\cap\bigcup_{l=1}^{N^*}(e_l,f_l))\\
 \medskip
 &=&
 \displaystyle
 \mu(\mathcal{B}\cap\bigcup_{l=1}^{N^*}(e_l,f_l-\epsilon/{N^*}))+\mu(\mathcal{B}\cap\bigcup_{l=1}^{N^*}[f_l-\epsilon/{N^*},f_l))\\
 \medskip
 &\leq&
 \displaystyle
 \sum_{l=1}^{N^*}\mu(\mathcal{B}\cap(e_l,f_l-\epsilon/{N^*}))+\sum_{l=1}^{N^*}\mu([f_l-\epsilon/{N^*},f_l))\\
 \medskip
 &\leq&
 \displaystyle
 4\epsilon^{2/3}+2M^2\epsilon.
 \end{array}
 \right.
 \nonumber
 \ees
As $\epsilon$ can be arbitrarily small, we get \eqref{l2.4a-2}, and so \eqref{l2.4a-1} holds.

In what follows, we will show $\mu(\{a_1\})=0$. Assume that $a_1\in(0,1)$.
Since $m$ is non-decreasing on $[a_1,a_2]$ and $(a_1,a_1+\epsilon)\cap\{x\in[0,1]:\ m'(x)>0\}\not=\emptyset,\,\forall \epsilon>0$, we know from \eqref{a} that
there are two cases to occur: Case (i)\ \ $m'(x)\geq0$ on $[a_1-\delta_0,a_1]$ for some small $\delta_0$;
Case (ii)\ \ $m'(x)\leq0$ on $[a_1-\delta_0,a_1]$ for some small $\delta_0$ and
$\{m'(x)<0\}\cap(a_1-\epsilon,a_1)\not=\emptyset$ for any small $\epsilon$.

In each case, by the same argument as obtaining the claim \eqref{claim1} in the proof of
Lemma \ref{l2.4} we have
 \bes
 \label{claim9}
 \limsup_{j\to\infty}\|w(s_j,\cdot)\|_{L^\infty((a_1-\delta_0,a_1+\delta_0))}=m_*<\infty.
 \ees
As a consequence, for any small $0<\epsilon<\delta_0$, by taking $\zeta=1$ on $[a_1-\epsilon,a_1+\epsilon]$,
$\zeta=0$ on $[0,a_1-2\epsilon]\cup[a_1+2\epsilon,1]$, and $0\leq\zeta\leq1$ otherwise so that $\zeta\in C([0,1])$,
from \eqref{1.7} it easily follows that
 \bes
 \left.\begin{array}{l}
 \displaystyle
 \mu(\{a_1\})\leq\mu([a_1-\epsilon,a_1+\epsilon])\leq\lim\limits_{j\to\infty}\int_0^1w^2(s_j,x)\zeta(x)dx\leq 4m_*^2\epsilon,
 \end{array}
 \right.
 \nonumber
 \ees
which implies $\mu(\{a_1\})=0$.

Clearly the above analysis can be used to handle the case $a_1=0$ and the end points $c_l,\,d_l$
of the interval $[c_l,d_l]$ for each $l\geq1$, and obtain $\mu(\{c_l\})=\mu(\{d_l\})=0$. There are at most
countably many such end points. Thus, the countable additivity of the Radon measure ensures $\mu([a_1,a_2)\setminus\bigcup_{l=1}^{L}(c_l,d_l))=0$.
If additionally $0<a_2<1$ and $m'(x)\geq0$ on $[a_2,a_2+\epsilon_0]$ for some small $\epsilon_0>0$, by the analysis as above we have
$\mu(\{a_2\})=0$. The proof of Lemma \ref{l2.4a} is ended.

\end{proof}

\begin{re}\label{r-a} The parallel assertion to Lemma \ref{l2.4a} holds: Assume that $m$ is non-increasing on $[a_1,a_2]\subset[0,1]$,
$(a_1,a_1+\epsilon)\cap\{x\in[0,1]:\ m'(x)<0\}\not=\emptyset$ and $(a_2-\epsilon,a_2)\cap\{x\in[0,1]:\ m'(x)<0\}\not=\emptyset,\,\forall \epsilon>0$.
Denote by $\bigcup_{l=1}^{L}(c_l,d_l)$ the set of all the intervals that $m$ is constant on each $[c_l,d_l]\subset(a_1,a_2)$,
and $[c_l,d_l]\cap[c_j,d_j]=\emptyset,\,\forall l\not=j$,
where ${L}\geq0$ is a finite integer or equal to $\infty$. Then we have $\mu((a_1,a_2]\setminus\bigcup_{l=1}^{L}(c_l,d_l))=0$,
and if additionally $0<a_1<1$ and $m'(x)\leq0$ on $[a_1-\epsilon_0,a_1]$ for some small $\epsilon_0>0$, then
$\mu([a_1,a_2]\setminus\bigcup_{l=1}^{L}(c_l,d_l))=0$.

\end{re}

With the above preparation, we are now in a position to prove Theorem \ref{th1.2} in the case of Neumann boundary condition.
In order to avoid using complicated notation as well as tedious analysis, we consider two special functions
$m$ which carry typical kinds of degeneracy, and derive the limit $\lim\limits_{s\to\infty}\lambda_1^\mathcal{N}(s)$
for each such chosen $m$. Then, it will be not hard to see that Theorem \ref{th1.2} follows by
using the similar argument as for those two cases. Keeping such a strategy in mind, we first choose a function $m$ such that

(T1): \ $m$ is strictly increasing on $[0,a_1]\cup[a_2,a_3]\cup[a_5,1]$, and is strictly decreasing on $[a_1,a_2]\cup[a_4,a_5]$, and
$m$ is constant on $[a_3,a_4]$, for positive constants $0<a_1<a_2<a_3<a_4<a_5<1$. See Figure 7.
For such given $m$, we can conclude that

\begin{figure}[htbp]\label{figure4}
\centering {\includegraphics[height=1.40in]{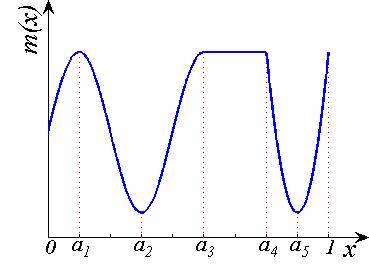}}\ \ \ \ \ \ \  \ \ \ \ \ \ \ \ \ \ \ \  \ \ \
{\includegraphics[height=1.40in]{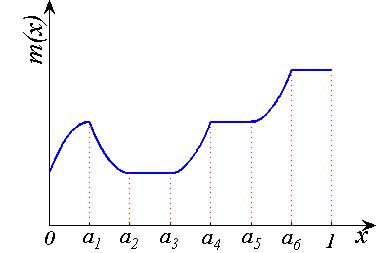}}

{Figure 7 \ \ \ \ \ \ \ \ \ \ \ \ \ \ \ \ \ \ \ \ \ \ \ \ \  \ \  \ \  \ \ \  \ \ \ \ \ \ \ \ \ \ \ \ \ \ \ \ \ \ \ Figure 8}
\end{figure}

\begin{theo}\label{th2.1} Assume that \rm{(T1)} holds, then
 $$
 \lim_{s\to\infty}\lambda_1^\mathcal{N}(s)=\min\{c(a_1),\ c(1),\ \lambda_1^\mathcal{NN}(a_3,a_4)\}.
 $$

\end{theo}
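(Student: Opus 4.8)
\medskip
\noindent\emph{Proof sketch (plan).}
The plan is to establish the two-sided bound
\[
\limsup_{s\to\infty}\lambda_1^{\mathcal N}(s)\ \le\ L\ \le\ \liminf_{s\to\infty}\lambda_1^{\mathcal N}(s),\qquad L:=\min\{c(a_1),\,c(1),\,\lambda_1^{\mathcal{NN}}(a_3,a_4)\},
\]
which forces $\lim_{s\to\infty}\lambda_1^{\mathcal N}(s)=L$. First I would record the structure of $\mathcal M$ for such an $m$: its only points of local maximum are the isolated interior point $a_1$, the boundary point $1$, and the segment $[a_3,a_4]$; moreover $a_1,1\in\mathcal M_1$, and $[a_3,a_4]$ is of type $[a^I,b^D]\in\mathcal M_3$, because $m$ is non-decreasing just to the left of $a_3$ and non-increasing just to the right of $a_4$. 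With this, the upper bound is immediate from Lemma~\ref{l2.1}: part~(i) applied at $a_1$ and at $1$ gives $\lambda^*\le c(a_1)$ and $\lambda^*\le c(1)$, and part~(iii) applied to $[a_3,a_4]$ gives $\lambda^*\le\lambda_1^{\mathcal{NN}}(a_3,a_4)$, so $\lambda^*\le L$.

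The substance is the lower bound. I would fix $s_j\to\infty$ along which $\lambda_1^{\mathcal N}(s_j)\to\lambda_*$ and, after passing to a subsequence, assume $w^2(s_j,\cdot)\to\mu$ weakly in the sense of \eqref{1.7}. The first task is to locate the support of $\mu$. On each of $(0,a_1)$, $(a_1,a_2)$, $(a_2,a_3)$, $(a_4,a_5)$, $(a_5,1)$ the function $m$ is strictly monotone, so $\mu$ vanishes there by Lemma~\ref{l2.4}. For the transition points, Lemma~\ref{l2.4a} applied on $[0,a_1]$ gives $\mu(\{0\})=0$, and applied on $[a_2,a_3]$ (using $m'\equiv0\ge0$ just to the right of $a_3$) gives $\mu(\{a_3\})=0$; Remark~\ref{r-a} applied on $[a_1,a_2]$ gives $\mu(\{a_2\})=0$, and applied on $[a_4,a_5]$ (using $m'\equiv0\le0$ just to the left of $a_4$) gives $\mu(\{a_4\})=\mu(\{a_5\})=0$. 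Hence $\mu$ is concentrated on $\{a_1\}\cup[a_3,a_4]\cup\{1\}$; set $\theta_1=\mu(\{a_1\})$, $\theta_2=\mu([a_3,a_4])=\mu((a_3,a_4))$, $\theta_3=\mu(\{1\})$, so $\theta_1+\theta_2+\theta_3=1$.

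Since $m'\equiv0$ on $[a_3,a_4]$, discarding the nonnegative term $(w'-s_jwm')^2$ on $[0,1]\setminus[a_3,a_4]$ in \eqref{1.4} gives
\[
\lambda_1^{\mathcal N}(s_j)\ \ge\ \int_{a_3}^{a_4}\bigl(w'(s_j,x)\bigr)^2\,dx+\int_0^1 c(x)\,w^2(s_j,x)\,dx .
\]
The restrictions $w(s_j,\cdot)|_{[a_3,a_4]}$ are bounded in $H^1(a_3,a_4)$ (as $\int_{a_3}^{a_4}(w')^2\le\lambda_1^{\mathcal N}(s_j)+\max_{[0,1]}|c|\le2\max_{[0,1]}|c|$ and $\int_{a_3}^{a_4}w^2\le1$), so along a further subsequence they converge weakly in $H^1(a_3,a_4)$ and strongly in $C([a_3,a_4])$ to some $v_\infty$. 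Using Lemma~\ref{l2.5}(ii)--(iii) together with $\mu(\{a_3\})=\mu(\{a_4\})=0$ one gets $\int_{a_3}^{a_4}v_\infty^2=\theta_2$ and $\int_{a_3}^{a_4}c\,v_\infty^2=\int_{[a_3,a_4]}c\,d\mu$; since $\mu$ charges only $\{a_1\}\cup[a_3,a_4]\cup\{1\}$, the weak convergence \eqref{1.7} with test function $c$ gives $\int_0^1 c\,w^2(s_j,\cdot)\to\theta_1c(a_1)+\theta_3c(1)+\int_{a_3}^{a_4}c\,v_\infty^2$. Taking $\liminf$ in the displayed inequality, using weak lower semicontinuity $\int_{a_3}^{a_4}(v_\infty')^2\le\liminf_j\int_{a_3}^{a_4}(w'(s_j))^2$ and then the variational characterization of $\lambda_1^{\mathcal{NN}}(a_3,a_4)$, namely $\int_{a_3}^{a_4}\bigl[(v_\infty')^2+c\,v_\infty^2\bigr]\ge\lambda_1^{\mathcal{NN}}(a_3,a_4)\int_{a_3}^{a_4}v_\infty^2=\theta_2\,\lambda_1^{\mathcal{NN}}(a_3,a_4)$, I arrive at
\[
\lambda_*\ \ge\ \theta_1c(a_1)+\theta_2\,\lambda_1^{\mathcal{NN}}(a_3,a_4)+\theta_3c(1)\ \ge\ (\theta_1+\theta_2+\theta_3)\,L\ =\ L,
\]
the last step because a convex combination of $c(a_1),\lambda_1^{\mathcal{NN}}(a_3,a_4),c(1)$ is no smaller than their minimum. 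Together with $\lambda^*\le L$ this yields $\lim_{s\to\infty}\lambda_1^{\mathcal N}(s)=L$.

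The hard part is not the bookkeeping above but the two facts I would quote, above all the support statement: that $\mu$ puts no mass on the strictly-monotone pieces of $m$ nor on the monotonicity-transition points $0,a_2,a_3,a_4,a_5$, which rests on the curve-rearrangement-plus-Sard argument behind Lemmas~\ref{l2.4}--\ref{l2.4a} and Remark~\ref{r-a}. The one conceptual subtlety in the assembly is that the limiting function $v_\infty$ on the flat piece is subject to no boundary constraint at $a_3,a_4$, so the relevant comparison eigenvalue is the Neumann--Neumann one $\lambda_1^{\mathcal{NN}}(a_3,a_4)$; this is exactly what distinguishes the type $[a^I,b^D]$ from segment types that force Dirichlet conditions, and it is built in simply by using $H^1(a_3,a_4)$ rather than $H^1_0(a_3,a_4)$ as the competitor class.
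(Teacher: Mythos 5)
Your proof is correct and follows the same overall architecture as the paper's: the upper bound comes from Lemma~\ref{l2.1}, the support of $\mu$ is pinned to $\{a_1\}\cup(a_3,a_4)\cup\{1\}$ via Lemmas~\ref{l2.4}, \ref{l2.4a} and Remark~\ref{r-a}, and the lower bound is then assembled from the variational characterization \eqref{1.4} by dropping the nonnegative drift terms off the flat piece. The one place where you genuinely diverge is the treatment of the $[a_3,a_4]$ contribution. The paper normalizes $v(s_j,\cdot)=w(s_j,\cdot)/(\int_{a_3}^{a_4}w^2)^{1/2}$, applies the Rayleigh quotient for $\lambda_1^{\mathcal{NN}}(a_3,a_4)$ at each fixed $j$, and then passes to the limit via Lemma~\ref{l2.5}(iii); this requires a separate case distinction when $\int_{a_3}^{a_4}w^2(s_j,\cdot)\,dx$ is eventually zero (the paper's discussion around \eqref{2.11c}). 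You instead extract a weak $H^1$ and uniform limit $v_\infty$ on $[a_3,a_4]$, invoke weak lower semicontinuity of the Dirichlet energy, and only then apply the variational characterization to $v_\infty$. That variant buys you a uniform argument that absorbs the degenerate case $\mu((a_3,a_4))=0$ automatically (the inequality $\int(v_\infty')^2+cv_\infty^2\ge\theta_2\lambda_1^{\mathcal{NN}}(a_3,a_4)$ is then $0\ge0$), and it is also the mechanism the paper itself uses later in the proof of Theorem~\ref{th2.2} for the Dirichlet pieces, so it is entirely in the spirit of the paper. Your identification of $\mu|_{(a_3,a_4)}$ with $v_\infty^2\,dx$ and the equality $\int_{a_3}^{a_4}c\,v_\infty^2=\int_{[a_3,a_4]}c\,d\mu$ are both justified by the uniform convergence plus $\mu(\{a_3\})=\mu(\{a_4\})=0$, which you correctly derived from Lemma~\ref{l2.4a} and Remark~\ref{r-a} applied to the strictly monotone pieces adjacent to the flat interval. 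The concluding convex-combination inequality $\theta_1c(a_1)+\theta_2\lambda_1^{\mathcal{NN}}(a_3,a_4)+\theta_3c(1)\ge L$ with $\theta_1+\theta_2+\theta_3=1$ is exactly the paper's \eqref{2.13}.
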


\begin{proof} By Lemma \ref{l2.1}, we have
 \bes
 \limsup_{s\to\infty}\lambda_1^\mathcal{N}(s)=\lambda^*\leq\min\{c(a_1),\ c(1),\ \lambda_1^\mathcal{NN}(a_3,a_4)\}.
 \label{2.8c}
 \ees
So it suffices to prove
 \bes
 \liminf_{s\to\infty}\lambda_1^\mathcal{N}(s)=\lambda_*\geq\min\{c(a_1),\ c(1),\ \lambda_1^\mathcal{NN}(a_3,a_4)\}.
 \label{2.9}
 \ees

First of all, Lemmas \ref{l2.4}, \ref{l2.4a}, Remark \ref{r-a} and the countable additivity of the probability measure tell us that
 \bes
 \mu([0,a_1)\cup(a_1,a_3]\cup[a_4,1))=0.
 \label{2.9a}
 \ees
Thus, there holds
 \bes
 \mu(\{a_1\})+\mu((a_3,a_4))+\mu(\{1\})=1.
 \label{2.10}
 \ees

On the other hand, for any small $\epsilon>0$, from \eqref{1.4} and the assumption that $m$ is
constant on $[a_3,a_4]$ it follows that
 \bes
 \left.\begin{array}{lll}
 \medskip
 \displaystyle
 \lambda_1^\mathcal{N}(s_j)
 &=&\displaystyle
 \int_0^1\{|w'(s_j,x)-s_jw(s_j,x)m'(x)|^2+c(x)w^2(s_j,x)\}dx\\
 \medskip
 &\geq&
 \displaystyle
 \Big(\int_0^{a_1-\epsilon}+\int_{a_1+\epsilon}^{a_3}+\int_{a_4}^{1-\epsilon}\Big)c(x)w^2(s_j,x)dx
 +\Big(\int_{a_1-\epsilon}^{a_1+\epsilon}+\int_{1-\epsilon}^{1}\Big)c(x)w^2(s_j,x)dx\\
 \medskip
 &&\ +
 \displaystyle
 \int_{a_3}^{a_4}\{|w'(s_j,x)|^2+c(x)w^2(s_j,x)\}dx.
 \end{array}
 \right.
 \label{2.11}
 \ees
Appealing to Lemma \ref{l2.5}(ii), we get
 \bes
 \Big(\int_0^{a_1-\epsilon}+\int_{a_1+\epsilon}^{a_3}+\int_{a_4}^{1-\epsilon}\Big)c(x)w^2(s_j,x)dx\to0,\ \ \mbox{as}\ j\to\infty,
 \label{2.11a}
 \ees
and by Lemma \ref{l2.6}, we have
 \bes
 \label{2.11b}\ \  \ \ \ \ \
 \lim_{j\to\infty}\Big(\int_{a_1-\epsilon}^{a_1+\epsilon}+\int_{1-\epsilon}^{1}\Big)c(x)w^2(s_j,x)dx
 \geq\min_{[a_1-\epsilon/2,a_1+\epsilon/2]}c(x)\, \mu(\{a_1\})+\min_{[1-\epsilon/2,1]}c(x)\, \mu(\{1\}).
 \ees

To handle the last integral in \eqref{2.11}, let us assume, for the moment, that there is a sequence of $\{w(s_j,\cdot)\}$,
still labelled by itself, such that
 \bes
 \label{2.11c}
 \int_{a_3}^{a_4}w^2(s_j,x)dx>0, \ \ \mbox{for all}\ j\geq 1,
 \ees
and hence set
 $$
 v(s_j,x)={{w(s_j,x)}\over{\sqrt{\int_{a_3}^{a_4}w^2(s_j,x)dx}}}.
 $$
Thus, $v(s_j,x)$\ satisfies $\int_{a_3}^{a_4}z^2(s_j,x)dx=1$ for each $j\geq1$.
By the variational characterization of $\lambda_1^\mathcal{NN}(a_3,a_4)$, combined with \eqref{2.9a} and Lemma \ref{l2.5}(iii), one finds
 \bes
 \left.\begin{array}{lll}
 \medskip
 \displaystyle
 \int_{a_3}^{a_4}\{|w'(s_j,x)|^2+c(x)w^2(s_j,x)\}dx
 &=&\displaystyle
 \int_{a_3}^{a_4}w^2(s_j,x)dx\int_{a_3}^{a_4}\{|v'(s_j,x)|^2+c(x)v^2(s_j,x)\}dx\\
 \medskip
 &\geq&
 \displaystyle
 \lambda_1^\mathcal{NN}(a_3,a_4)\int_{a_3}^{a_4}w^2(s_j,x)dx\\
 \medskip
 &\to&
 \displaystyle
 \lambda_1^\mathcal{NN}(a_3,a_4)\mu((a_3,a_4)),\ \ \mbox{as}\ j\to\infty.
 \end{array}
 \right.
 \label{2.12}
 \ees
By means of \eqref{2.11}, \eqref{2.11a}, \eqref{2.11b} and \eqref{2.12}, we deduce
 $$
 \lambda_*=\liminf_{j\to\infty}\lambda_1^\mathcal{N}(s_j)\geq\min_{[a_1-\epsilon/2,a_1+\epsilon/2]}c(x)\,
 \mu(\{a_1\})+\min_{[1-\epsilon/2,1]}c(x)\, \mu(\{1\})+\lambda_1^\mathcal{NN}(a_3,a_4)\mu((a_3,a_4)).
 $$
Since $\epsilon>0$ is arbitrary, by sending $\epsilon\to0$ in the above inequality and using \eqref{2.10}, we obtain
 \bes
 \left.\begin{array}{lll}
 \medskip
 \displaystyle
 \lambda_*
 &\geq&\displaystyle
 c(a_1)\mu(\{a_1\})+c(1)\mu(\{1\})+\lambda_1^\mathcal{NN}(a_3,a_4)\mu((a_3,a_4))\\
 \medskip
 &\geq&
 \displaystyle
 \min\{c(a_1),\ c(1),\ \lambda_1^\mathcal{NN}(a_3,a_4)\}.
 \end{array}
 \right.
 \label{2.13}
 \ees

If \eqref{2.11c} does not hold, then $\int_{a_3}^{a_4}w^2(s_j,x)dx=0$ for all large $j$,
and in turn $w(s_j,\cdot)\equiv0$ on $[a_3,a_4]$ since $w(s_j,\cdot)\in C([a_3,a_4])$. Hence, the last integral in \eqref{2.11} converges to zero.
In addition, Lemma \ref{l2.5}(i) asserts $\mu([a_3,a_4])=0$, and so $\mu(\{a_1\})+\mu(\{1\})=1$ due to \eqref{2.10}.
Using these facts, \eqref{2.11a} and \eqref{2.11b}, it follows from \eqref{2.11} and \eqref{2.8c} that
 \bes
 \min\{c(a_1),\ c(1),\ \lambda_1^\mathcal{NN}(a_3,a_4)\}\geq\lambda^*\geq\lambda_*\geq\displaystyle c(a_1)\mu(\{a_1\})+c(1)\mu(\{1\})\geq \min\{c(a_1),\ c(1)\},
 \nonumber
 \ees
which thereby implies $\lambda_1^\mathcal{NN}(a_3,a_4)\geq\min\{c(a_1),\ c(1)\}$. As a result, \eqref{2.13} remains true and \eqref{2.9} is established.
This completes the proof.

\end{proof}

\begin{re}\label{r1} A direct check of the proof of Theorem \ref{th2.1} shows that the support of the probability measure $\mu$ is the set
where the limit $\lim\limits_{s\to\infty}\lambda_1^\mathcal{N}(s)$ is attained; for example, if $\lim\limits_{s\to\infty}\lambda_1^\mathcal{N}(s)=c(a_1)<\min\{c(1),\ \lambda_1^\mathcal{NN}(a_3,a_4)\}$, then $\mu(\{a_1\})=1$ and $\mu([0,a_1)\cup(a_1,1])=0$, and if $\lim\limits_{s\to\infty}\lambda_1^\mathcal{N}(s)=c(a_1)=c(1)<\lambda_1^\mathcal{NN}(a_3,a_4)$, then $\mu(\{a_1,\,1\})=1$ and $\mu([0,a_1)\cup(a_1,1))=0$. Conversely, the limit is attained on the support of $\mu$; for example, if $\mu(\{a_1\})>0$, then $\lim\limits_{s\to\infty}\lambda_1^\mathcal{N}(s)=c(a_1)$ (and so $c(a_1)\leq\min\{c(1),\ \lambda_1^\mathcal{NN}(a_3,a_4)\}$), and if $\mu((a_3,a_4))>0$, then $\lim\limits_{s\to\infty}\lambda_1^\mathcal{N}(s)=\lambda_1^\mathcal{NN}(a_3,a_4)$ (and so $\lambda_1^\mathcal{NN}(a_3,a_4)\leq\min\{c(a_1),\ c(1)\}$). A similar comment also applies to Theorem \ref{th2.2} below.

\end{re}

We next choose another typical function $m$ satisfying

(T2): \ $m$ is strictly increasing on $[0,a_1]\cup[a_3,a_4]\cup[a_5,a_6]$, and is strictly decreasing on $[a_1,a_2]$, and
$m$ is constant on $[a_2,a_3]$, $[a_4,a_5]$ and $[a_6,1]$, for positive constants $0<a_1<a_2<a_3<a_4<a_5<a_6<1$. See Figure 8.
For such a given function $m$, we have

\begin{theo}\label{th2.2} Assume that \rm{(T2)} holds, then
 $$
 \lim_{s\to\infty}\lambda_1^\mathcal{N}(s)=\min\{c(a_1),\ \lambda_1^\mathcal{DD}(a_2,a_3),\ \lambda_1^\mathcal{ND}(a_4,a_5),\ \lambda_1^\mathcal{NN}(a_6,1)\}.
 $$

\end{theo}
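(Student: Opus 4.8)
The plan is to follow the two-step template of Theorem \ref{th2.1}: an upper bound for $\lambda^*=\limsup_{s\to\infty}\lambda_1^{\mathcal N}(s)$ from the test-function estimates of Lemma \ref{l2.1}, and a matching lower bound for $\lambda_*=\liminf_{s\to\infty}\lambda_1^{\mathcal N}(s)$ obtained by localizing the limit measure $\mu$ and splitting the Rayleigh quotient \eqref{1.4}. For the upper bound, observe that under (T2) the point $a_1$ is an isolated interior point of local maximum (so $a_1\in\mathcal M_1$), $[a_2,a_3]$ is a segment $[a^D,b^I]\in\mathcal M_4$, $[a_4,a_5]$ is a segment $[a^I,b^I]\in\mathcal M_2$, and $[a_6,1]$ is a segment $[a^I,1]\in\mathcal M_8$; parts (i), (iv), (ii) and (viii) of Lemma \ref{l2.1} then give respectively $\lambda^*\le c(a_1)$, $\lambda^*\le\lambda_1^{\mathcal{DD}}(a_2,a_3)$, $\lambda^*\le\lambda_1^{\mathcal{ND}}(a_4,a_5)$ and $\lambda^*\le\lambda_1^{\mathcal{NN}}(a_6,1)$, which is the asserted upper bound.

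Next I would localize $\mu$. Applying Lemma \ref{l2.4} (and the strengthening of it stated immediately afterward) to the maximal intervals of strict monotonicity gives $\mu((0,a_1))=\mu(\{0\})=0$ from the strict increase on $[0,a_1]$, and $\mu((a_1,a_2])=0$ from the strict decrease on $[a_1,a_2]$ (so $\mu(\{a_1\})$ is left untouched, as it must be). Lemma \ref{l2.4a} applied to $[a_3,a_4]$ and to $[a_5,a_6]$ — each containing no interior flat pieces, and with $m'\equiv0$ just to the right of $a_4$ and of $a_6$, so that the endpoint refinement applies — gives $\mu([a_3,a_4])=\mu([a_5,a_6])=0$; Remark \ref{r-a} is the companion statement on the decreasing side. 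By countable additivity, $\mu\big([0,a_1)\cup(a_1,a_2]\cup[a_3,a_4]\cup[a_5,a_6]\big)=0$, equivalently
\[
\mu(\{a_1\})+\mu((a_2,a_3))+\mu((a_4,a_5))+\mu((a_6,1])=1 .
\]

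For the lower bound, fix small $\epsilon>0$, put $R_\epsilon=[0,a_1-\epsilon]\cup[a_1+\epsilon,a_2]\cup[a_3,a_4]\cup[a_5,a_6]$ (a $\mu$-null set), and estimate $\lambda_1^{\mathcal N}(s_j)$ from below, using \eqref{1.4} and $m'\equiv0$ on the three flat segments, by $\int_{R_\epsilon}cw^2$, plus $\int_{a_1-\epsilon}^{a_1+\epsilon}cw^2$, plus the three flat-segment integrals $\int_{a_2}^{a_3}(|w'|^2+cw^2)$, $\int_{a_4}^{a_5}(|w'|^2+cw^2)$, $\int_{a_6}^{1}(|w'|^2+cw^2)$. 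The $R_\epsilon$-term tends to $0$ by Lemma \ref{l2.5}(ii), and the $a_1$-term is controlled by Lemma \ref{l2.6}(i). On each flat segment $m$ is constant, so there $w(s_j,\cdot)$ solves $w''=(c-\lambda_1^{\mathcal N}(s_j))w$ with coefficients bounded uniformly in $j$; together with $\int w^2\le1$ this forces a uniform $C^1$ bound and, along a subsequence, $C^1$ convergence on each closed flat segment. On $[a_6,1]$ (Neumann at both ends, the right one inherited from \eqref{1.1}) one argues as in Theorem \ref{th2.1}: normalize $w(s_j,\cdot)$ on $[a_6,1]$ and combine the variational characterization of $\lambda_1^{\mathcal{NN}}(a_6,1)$ with Lemma \ref{l2.5}(iii) (with the obvious adjustment at the boundary point $1$) to obtain $\liminf_j\int_{a_6}^1(|w'|^2+cw^2)\ge\lambda_1^{\mathcal{NN}}(a_6,1)\,\mu((a_6,1])$. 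On $[a_4,a_5]\in\mathcal M_2$ and $[a_2,a_3]\in\mathcal M_4$ the same works once the $C^1$-limit is admissible for the variational problem defining $\lambda_1^{\mathcal{ND}}(a_4,a_5)$, resp. $\lambda_1^{\mathcal{DD}}(a_2,a_3)$; granting $w(s_j,a_5)\to0$, resp. $w(s_j,a_2)\to0$ and $w(s_j,a_3)\to0$, one subtracts from $w(s_j,\cdot)$ an affine function vanishing at the relevant Dirichlet endpoint(s), of $H^1$-norm controlled by those (vanishing) boundary values, and obtains $\liminf_j\int_{a_4}^{a_5}(|w'|^2+cw^2)\ge\lambda_1^{\mathcal{ND}}(a_4,a_5)\,\mu((a_4,a_5))$ and $\liminf_j\int_{a_2}^{a_3}(|w'|^2+cw^2)\ge\lambda_1^{\mathcal{DD}}(a_2,a_3)\,\mu((a_2,a_3))$ from Lemma \ref{l2.5}(iii). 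Summing, sending $\epsilon\to0$, and using the mass identity gives $\lambda_*\ge c(a_1)\mu(\{a_1\})+\lambda_1^{\mathcal{DD}}(a_2,a_3)\mu((a_2,a_3))+\lambda_1^{\mathcal{ND}}(a_4,a_5)\mu((a_4,a_5))+\lambda_1^{\mathcal{NN}}(a_6,1)\mu((a_6,1])\ge\min\{c(a_1),\lambda_1^{\mathcal{DD}}(a_2,a_3),\lambda_1^{\mathcal{ND}}(a_4,a_5),\lambda_1^{\mathcal{NN}}(a_6,1)\}$; the degenerate case in which $w(s_j,\cdot)\equiv0$ on a flat segment is disposed of exactly as in Theorem \ref{th2.1}.

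The hard part, and the only genuinely new ingredient over the argument of Theorem \ref{th2.1}, is the vanishing claim $w(s_j,a_2)\to0$, $w(s_j,a_3)\to0$, $w(s_j,a_5)\to0$ — that $w(s_j,\cdot)$ vanishes in the limit at the endpoint of a flat maximum segment carrying a Dirichlet condition (a left endpoint into which $m$ decreases, or a right endpoint away from which $m$ increases). This is not a formal consequence of Lemmas \ref{l2.2}--\ref{l2.6}, which only control $\mu$ and give $L^2$- or interior-smallness, and I expect it to require the curve-rearrangement and energy argument from the proof of Lemma \ref{l2.4}. For $a_3$, say: if $w(s_j,a_3)\ge2\eta>0$ along a subsequence, the uniform $C^1$ bound on $[a_2,a_3]$ forces $w(s_j,\cdot)\ge\eta$ on $[a_3-\delta,a_3]$, while Lemma \ref{l2.3}(i) supplies points $x_j\downarrow a_3$ with $m'(x_j)>0$ and $w(s_j,x_j)\to0$; on $[a_3,x_j]$ one has $m'\ge0$, hence on the set $\{w'<0\}$ there $(w'-s_jwm')^2\ge(w')^2$, and the variational bound $\int_0^1(w'-s_jwm')^2\le\lambda_1^{\mathcal N}(s_j)+\max_{[0,1]}|c|$ controls $\int_{\{w'<0\}\cap(a_3,x_j)}(w')^2$; rearranging the descending pieces of the graph of $w(s_j,\cdot)$ into a single monotone curve $W$ over an interval of length $\le x_j-a_3\to0$ that drops by at least $\eta$ and has no larger Dirichlet energy, then comparing with the straight segment, forces $\int|W'|^2\ge\eta^2/(x_j-a_3)\to\infty$, a contradiction. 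The left Dirichlet endpoint $a_2$ is the mirror image, using $\{w'>0\}$ and $(w'-s_jwm')^2\ge(w')^2$ there. With this vanishing lemma in hand, Theorem \ref{th2.2} follows by assembling the estimates above.
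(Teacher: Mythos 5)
Your proposal follows essentially the same two-step template as the paper and correctly identifies the one genuinely new ingredient over Theorem \ref{th2.1}: showing that the rescaled limit of $w(s_j,\cdot)$ vanishes at a Dirichlet endpoint of a flat segment (at $a_2$, $a_3$, $a_5$), and that this follows from the curve-rearrangement/energy argument in the proof of Lemma \ref{l2.4} rather than from Lemmas \ref{l2.2}--\ref{l2.6} alone. That is exactly what the paper does (it spells it out at $a_2$, using $\{m'<0\}\cap(a_2-\rho,a_2)\neq\emptyset$ and points to the left where $w(s_j,\cdot)$ is small; your version at $a_3$, with points to the right where $m'>0$ and $w(s_j,\cdot)\to 0$ by Lemma \ref{l2.3}(i), is the mirror image). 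The upper bound via Lemma \ref{l2.1}, the $\mu$-localization via Lemmas \ref{l2.4}, \ref{l2.4a} and Remark \ref{r-a}, and the decomposition of the Rayleigh quotient with Lemmas \ref{l2.5}(ii)--(iii) and \ref{l2.6} all match the paper's argument.

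Two minor technical differences worth noting. First, for the uniform $H^1$ (hence $C^1$) bound on a flat segment $[a_2,a_3]$, the paper does not rely only on the ODE $w''=(c-\lambda_1^{\mathcal N}(s_j))w$ and the $L^2$ bound (which by itself does not control $w'$ up to the boundary); it reads the bound $\int_{a_2}^{a_3}|w'(s_j,\cdot)|^2\le 2\max_{[0,1]}|c|$ directly off the Rayleigh quotient \eqref{1.4}, using $m'\equiv 0$ there. You should state that route rather than inferring the $C^1$ bound from bounded coefficients alone. Second, to convert the flat-segment integrals into $\lambda_1^{\mathcal{DD}}\,\mu((a_2,a_3))$ etc., the paper does not subtract affine corrections from $w(s_j,\cdot)$; it first passes to the uniform $C^1$ limit $w_*$ (which, by the vanishing lemma, already lies in $H^1_0(a_2,a_3)$, resp.\ satisfies the mixed boundary condition on $[a_4,a_5]$) and then applies the variational characterization to $w_*$ normalized in $L^2$. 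Your affine-subtraction device is a valid alternative, but the paper's passage-to-the-limit-first is simpler and avoids any estimate on the correction term. With those two adjustments your sketch lines up with the paper's proof step for step, including the treatment of the degenerate case in which a flat segment carries no mass.
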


\begin{proof} By Lemma \ref{l2.1}, we have
 \bes
 \lambda^*\leq\min\{c(a_1),\ \lambda_1^\mathcal{DD}(a_2,a_3),\ \lambda_1^\mathcal{ND}(a_4,a_5),\ \lambda_1^\mathcal{NN}(a_6,1)\}.
 \label{2.14}
 \ees
In addition, applying Lemmas \ref{l2.4}, \ref{l2.4a} and Remark \ref{r-a}, we obtain
 \bes
 \mu([0,a_1)\cup(a_1,a_2]\cup[a_3,a_4]\cup[a_5,a_6])=0
 \label{2.16}
 \ees
and so
 \bes
 \mu(\{a_1\})+\mu((a_2,a_3))+\mu((a_4,a_5))+\mu(a_6,1])=1.
 \label{2.17}
 \ees
In what follows we are going to prove
 \bes
 \lambda_*\geq\min\{c(a_1),\ \lambda_1^\mathcal{DD}(a_2,a_3),\ \lambda_1^\mathcal{ND}(a_4,a_5),\ \lambda_1^\mathcal{NN}(a_6,1)\}.
 \label{2.18}
 \ees

\vskip6pt To achieve the aim, we first need the following fact: for a generic sequence $\{s_j\}$, there holds
 \bes
 w(s_j,x)\to w_*(x)\ \ \mbox{uniformly on\ $[a_2,a_3]$},\ \ \mbox{as}\ j\to\infty,
 \label{2.8}
 \ees
for some $w_*\in C([a_2,a_3])$. Indeed, since $m(x)$ is constant on $[a_2,a_3]$, we get
 \bes
 \left.\begin{array}{lll}
 \medskip
 \displaystyle
 \max_{[0,1]}|c(x)|\geq\lambda_1^\mathcal{N}(s_j)&=&
 \medskip
 \displaystyle
 \int_0^1[|w'(s_j,x)-s_jw(s_j,x)m'(x)|^2+c(x)w^2(s_j,x)]dx\\
 &\geq&
 \displaystyle
 \int_{a_2}^{a_3}|w'(s_j,x)|^2dx-\max_{[0,1]}|c(x)|,
 \end{array}
 \right.
 \nonumber
 \ees
which in turn gives
 $$
 \int_{a_2}^{a_3}w^2(s_j,x)dx+\int_{a_2}^{a_3}|w'(s_j,x)|^2dx\leq1+2\max_{[0,1]}|c(x)|.
 $$
Notice that $H^1((a_2,a_3))$ is compactly embedded into $C([a_2,a_3])$. Thus, \eqref{2.8} holds true.

As $m$ is constant on $[a_2,a_3]$ and $w(s_j,\cdot)$ satisfies
 \bes
 -w''(s_j,x)+c(x)w(s_j,x)=\lambda_1^\mathcal{N}(s_j)w(s_j,x),\ a_2<x<a_3.
 \label{xx}
 \ees
By a standard compactness consideration, for a subsequence of $\{s_j\}$, denoted by itself for simplicity,
satisfying $\lambda_1^\mathcal{N}(s_j)\to\lambda_*$ as $j\to\infty$, it is easily seen that
$w(s_j,x)\to w_*(x)$ in $C^1_{{loc}}((a_2,a_3))$. Combining this fact and \eqref{2.8}, one can use \eqref{xx} to further
conclude that $w(s_j,x)\to w_*(x)$ in $C^1([a_2,a_3])$. So $w_*\in C^1([a_2,a_3])$ solves in the weak sense
 \bes
 -w_*''(x)+c(x)w_*(x)=\lambda_*w_*(x),\ \ a_2<x<a_3.
 \nonumber
 \ees

In the sequel, we will show $w_*(a_2)=w_*(a_3)=0$. We argue indirectly and suppose that $w_*(a_2)>0$.
Since $w(s_j,a_2)\to w_*(a_2)>0$, $w(s_j,a_2)\geq {1\over2}w_*(a_2)$ for all large $j$. On the other hand,
thanks to $\{m'(x)<0\}\cap(a_2-\rho,a_2)\not=\emptyset,\,\forall \rho>0$, for any given small constant $\delta$ with
$0<\delta<{1\over4}w_*(a_2)$, there exist two sequences $\{j_k\}_{k=1}^\infty$ and $\{\rho_k\}_{k=1}^\infty$
satisfying $j_k\to\infty,\,\rho_k\to0$ as $k\to\infty$, such that
$w(s_{_{j_k}},a_2-\rho_k)\leq\delta,\,\forall k\geq1$. Noticing that $\rho_k\to0$ as $k\to\infty$, and
 $$
 w(s_{_{j_k}},a_2)\geq {1\over2}w_*(a_2),\ \ w(s_{_{j_k}},a_2-\rho_k)\leq\delta<{1\over4}w_*(a_2),
 $$
one can use the similar analysis to that in the proof of Lemma \ref{l2.4} to arrive at a contradiction. Thus $w_*(a_2)=0$ holds. Similarly, we have
$w_*(a_3)=0$.

To summarize, the above argument asserts that $w(s_j,x)\to w_*(x)$ in $C^1([a_2,a_3])$
for some function $w_*$, where $w_*\geq0$ solves
 \bes
 -w_*''(x)+c(x)w_*(x)=\lambda_*w_*(x),\ a_2<x<a_3;\ \ \ w_*(a_2)=w_*(a_3)=0.
 \label{2.21}
 \ees
Similarly, on $[a_4,a_5]$ and $[a_6,1]$, by passing to a further subsequence if necessary, we know that
$w(s_j,x)\to w_*(x)$ in $C^1([a_4,a_5])$, and $w_*\geq0$ in $[a_4,a_5]$ and solves
 \bes
 -w_*''(x)+c(x)w_*(x)=\lambda_*w_*(x),\ a_4<x<a_5;\ \ w_*(a_5)=0,
 \label{2.22}
 \ees
and $w(s_j,x)\to w_*(x)$ in $C^1([a_6,1])$, and $w_*\geq0$ in $[a_6,1]$ and solves
 \bes
 -w_*''(x)+c(x)w_*(x)=\lambda_*w_*(x),\ a_6<x<1.
 \label{2.23}
 \ees

\vskip6pt
Now, given any small $\epsilon>0$, by means of \eqref{1.4} and the assumption on $m$, we obtain
 \bes
 \left.\begin{array}{lll}
 \medskip
 \displaystyle
 \lambda_1^\mathcal{N}(s_j)
 &=&\displaystyle
 \int_0^1\{|w'(s_j,x)-s_jw(s_j,x)m'(x)|^2+c(x)w^2(s_j,x)\}dx\\
 \medskip
 &\geq&
 \displaystyle
 \Big(\int_0^{a_1-\epsilon}+\int_{a_1+\epsilon}^{a_2}+\int_{a_3}^{a_4}+\int_{a_5}^{a_6}\Big)c(x)w^2(s_j,x)dx
 +\int_{a_1-\epsilon}^{a_1+\epsilon}c(x)w^2(s_j,x)dx\\
 \medskip
 &&\ +
 \displaystyle
 \Big(\int_{a_2}^{a_3}+\int_{a_4}^{a_5}+\int_{a_6}^{1}\Big)\Big\{|w'(s_j,x)|^2+c(x)w^2(s_j,x)\Big\}dx.
 \end{array}
 \right.
 \label{2.24}
 \ees
Using Lemma \ref{l2.5}(ii), it is easily seen that the first term in \eqref{2.24} converges to zero as $j\to\infty$.
By virtue of Lemma \ref{l2.6}, one has
 \bes
 \label{claim14}
 \lim_{j\to\infty}\int_{a_1-\epsilon}^{a_1+\epsilon}c(x)w^2(s_j,x)dx
 \geq\min_{[a_1-\epsilon/2,a_1+\epsilon/2]}c(x)\, \mu(\{a_1\}).
 \ees

Assume that there is a subsequence of $\{s_j\}$, labelled by itself again, such that
  \bes
 \label{2.24b}
 \int_{a_6}^{1}w^2(s_j,x)dx>0 \ \ \mbox{for all large}\ j,
 \ees
and
 \bes
 \label{2.24a}
 w_*(x)\geq,\not\equiv0 \ \ \mbox{on}\ [a_2,a_3],\ \  w_*(x)\geq,\not\equiv0 \ \ \mbox{on}\ [a_4,a_5].
 \ees
Let us denote
 $$
 v_1(s_j,x)={{w(s_j,x)}\over{\sqrt{\int_{a_2}^{a_3}w^2(s_j,x)dx}}},\ \ v_2(s_j,x)={{w(s_j,x)}\over{\sqrt{\int_{a_4}^{a_5}w^2(s_j,x)dx}}},
 \  \ v_3(s_j,x)={{w(s_j,x)}\over{\sqrt{\int_{a_6}^{1}w^2(s_j,x)dx}}}.
 $$
Thus, we have
 $$
 v_1\to{{w_*}\over{\sqrt{\int_{a_2}^{a_3}w^2_*(x)dx}}}=:w_1\ \mbox{in}\ C^1([a_2,a_3]),\ \ v_2\to{{w_*}\over{\sqrt{\int_{a_4}^{a_5}w^2_*(x)dx}}}=:w_2\ \mbox{in}\ C^1([a_4,a_5]),\ \
 $$
and $\int_{a_6}^{1}v_3^2(s_j,x)dx=1$ for each $j\geq1$, where $w_1,\,w_2\geq,\not\equiv0$ with $w_1(a_2)=w_1(a_3)=w_2(a_5)=0$ and
$\int_{a_2}^{a_3}w_1^2(x)dx=\int_{a_4}^{a_5}w_2^2(x)dx=1$.

In view of the variational characterizations for $\lambda_1^\mathcal{DD}(a_2,a_3),\ \lambda_1^\mathcal{ND}(a_4,a_5),\ \lambda_1^\mathcal{NN}(a_6,1)$,
similarly to the proof of Theorem \ref{th2.1} we have, as $j\to\infty$,
 \bes
 \left.\begin{array}{lll}
 \medskip
 \displaystyle
 \int_{a_2}^{a_3}\{|w'(s_j,x)|^2+c(x)w^2(s_j,x)\}dx
 &=&\displaystyle
 \int_{a_2}^{a_3}w^2(s_j,x)dx\int_{a_2}^{a_3}\{|v_1'(s_j,x)|^2+c(x)v_1^2(s_j,x)\}dx\\
 \medskip
 &\to&
 \displaystyle
 \int_{a_2}^{a_3}w^2_*(x)dx\int_{a_2}^{a_3}\{|w_1'(x)|^2+c(x)w_1^2(x)\}dx\\
 \medskip
 &\geq&
 \displaystyle
 \lambda_1^\mathcal{DD}(a_2,a_3)\mu((a_3,a_4)),
 \end{array}
 \right.
 \label{2.25a}
 \ees
and
 \bes
 \int_{a_4}^{a_5}\{|w'(s_j,x)|^2+c(x)w^2(s_j,x)\}dx\geq\lambda_1^\mathcal{ND}(a_4,a_5)\mu((a_4,a_5)),
 \label{2.25b}
 \ees
and
 \bes
 \int_{a_6}^{1}\{|w'(s_j,x)|^2+c(x)w^2(s_j,x)\}dx\geq\lambda_1^\mathcal{NN}(a_6,1)\mu((a_6,1]).
 \label{2.25c}
 \ees
Making use of \eqref{2.17}, \eqref{2.24}, \eqref{claim14}, \eqref{2.25a}, \eqref{2.25b} and \eqref{2.25c}, we can easily deduce \eqref{2.18}
when \eqref{2.24a} and \eqref{2.24b} hold.

If one of \eqref{2.24a} and \eqref{2.24b} is not satisfied, the analysis of
Theorem \ref{th2.1} can be adapted to obtain \eqref{2.18}. So the desired limit is derived.
\end{proof}

Following the similar argument to that of Theorems \ref{th2.1} and \ref{th2.2},
taking into account Lemma \ref{l2.4a} and its proof, one can obtain Theorem \ref{th1.2} for problem \eqref{1.1}.

\subsection{The
eigenvalue problems \eqref{p} and \eqref{1.p1}}
In the previous subsection, we have proved Theorem \ref{th1.2} in the Neumann boundary condition case (that is, $\ell_1=\ell_2=0$).
This subsection concerns problem \eqref{p} in the case of $\ell_1+\ell_2>0$ and the periodic problem \eqref{1.p1}. We first give

\vskip6pt
{\it Proof of Theorem \ref{th1.2} for $\ell_1+\ell_2>0$}\ \ We first consider the case of $\hbar_i,\,\ell_i>0\, (i=1,2)$. Then through the same transformation
$w=e^{sm}\varphi$ as for the Neumann problem, \eqref{p} is equivalent to
 \bes
 \left\{\begin{array}{ll}
 \medskip
 \displaystyle
 -w''(x)+[s^2(m'(x))^2+sm''(x)+c(x)]w(x)=\lambda_1(s)w(x),\ 0<x<1,\\
 \displaystyle
 (s\hbar_1+\ell_1)w(0)-\hbar_1w'(0)=(-s\hbar_2+\ell_2)w(1)+\hbar_2w'(1)=0.
 \end{array}
 \right.
 \label{4.1d}
 \ees
For each $s\in\bR$, $\lambda_1(s)$ enjoys the variational characterization
 \bes
 \left.\begin{array}{lll}
 \medskip
 \displaystyle
 \lambda_1(s)
 &=&
 \displaystyle
 \min_{\int_0^1 e^{2sm}\varphi^2dx=1}\Big\{\int_0^1e^{2sm}[(\varphi')^2+c\varphi^2]dx
 +{{\ell_1}\over{\hbar_1}}e^{2sm(0)}\varphi^2(0)+{{\ell_2}\over{\hbar_2}}e^{2sm(1)}\varphi^2(1)\Big\}\\
 &=&
 \displaystyle
 \min_{\int_0^1 w^2dx=1}\Big\{\int_0^1[(w'-swm')^2+cw^2]dx+{{\ell_1}\over{\hbar_1}}w^2(0)+{{\ell_2}\over{\hbar_2}}w^2(1)\Big\}.
 \end{array}
 \right.
 \nonumber
 \ees
Additionally, $\min_{x\in[0,1]}c(x)\leq\lambda_1(s),\,\forall s\in\bR$.

As in the Neumann case, we use $w(s,\cdot)$ with $\int_0^1w^2(s,x)dx=1$ to
denote the principal eigenfunction of \eqref{4.1d}, and by the weak compactness of
$\{w^2(s,\cdot)\}_{s>0}$, for a generic sequence
$\{s_j\}_{j=1}^\infty$ satisfying $s_j\to\infty$ as $j\to\infty$, we assume that
 \bes
 \lim\limits_{j\to\infty}\int_0^1w^2(s_j,x)\zeta(x)dx=\int_{[0,1]}\zeta(x)\mu(dx),\ \ \forall \zeta\in C([0,1])
 \nonumber
 \ees
for a unique probability measure $\mu$.

We assume that $m$ has at least one interior point of local maximum. Clearly, under our hypothesis \eqref{a},
$m$ contains at least one isolated interior point or segment of local maximum. Hence,
one can easily modify the argument of Lemma \ref{l2.1} to show that
$\limsup\limits_{s\to\infty}\lambda_1(s)<\infty$.

Conversely, if $m$ has no interior point of local maximum, then under the assumption \eqref{a}, there are only three
possibilities: $0$ is the unique isolated point of local maximum, and so $m$ must be strictly decreasing on $[0,1]$; \
$1$ is the unique isolated point of local maximum, and so $m$ must be strictly increasing on $[0,1]$; \
only $0$ and $1$ are the isolated points of local maximum, and so $m$ must be strictly decreasing on $[0,x_0]$
while strictly decreasing on $[x_0,1]$ for some $x_0\in(0,1)$. We only consider the first case and the other two cases can be tackled similarly.
In order to show $\lim\limits_{s\to\infty}\lambda_1(s)=\infty$, we proceed indirectly and suppose that
$\liminf\limits_{s\to\infty}\lambda_1(s)=\lambda_*\in[\min_{x\in[0,1]}c(x),\infty)$. Thus,
$\lim\limits_{j\to\infty}\lambda_1(s_j)=\lambda_*$ for some sequence $\{s_j\}_{j\geq1}$ satisfying $s_j\to\infty$
as $j\to\infty$. Then Lemma \ref{l2.4a} gives $\mu((0,1])=0$, and in turn $\mu(\{0\})=1$. By virtue of $\int_0^1w^2(s_i,x)dx=1$ for each $j\geq1$,
with the help of Lemma \ref{l2.5}(ii), one can further assert that
 \bes
 \nonumber
 \limsup_{j\to\infty}\|w(s_j,\cdot)\|_{L^\infty((0,1))}=\infty.
 \ees
Hence, the analysis similar to that of Lemma \ref{l2.4} results in a contradiction, and so it is necessary that
$\lim\limits_{s\to\infty}\lambda_1(s)=\infty$.

In summary, the above analysis shows that when $\hbar_i,\,\ell_i>0\, (i=1,2)$, $\lim\limits_{s\to\infty}\lambda_1(s)=\infty$
if and only if $\mathcal{M}=\mathcal{M}_1\subset\{0,1\}$. Moreover, if $\lim\limits_{s\to\infty}\lambda_1(s)<\infty$,
we first use the same argument as in Lemma \ref{l2.1} to deduce
$\limsup\limits_{s\to\infty}\lambda_1(s)\leq\hat\lambda$,
where $\hat\lambda$ is the limiting value given in Theorem \ref{th1.2}(ii). So as above, we can claim that $\mu(\{0,\,1\}\cap\mathcal{M}_1)=0$.
Combined with this fact, the analysis of Theorems \ref{th2.1} and \ref{th2.2} can be easily adapted to prove Theorem \ref{th1.2}(ii); the details are omitted
here to avoid unnecessary repetition. Here the only point we want to stress is that when one of $\mathcal{M}_i (i=6,7,8,9)$ is nonempty,
for instance, if $[0,a^I]\in\mathcal{M}_6$, since $m$ is constant on $[0,a^I]$, through the transformation $w=e^{sm}\varphi$, we notice that $w$ satisfies the same
boundary condition as $\varphi$ at the endpoint $0$: $-\hbar_1w'(0)+\ell_1w(0)=0$.

When $\hbar_1=\hbar_2=0$ (so $\ell_1,\,\ell_2>0$), \eqref{1.1} becomes Dirichlet eigenvalue problem \eqref{4.1}. Thus, we have
 \bes
 \left.\begin{array}{lll}
 \medskip
 \displaystyle
 \lambda_1(s)
 &=&
 \displaystyle
 \min_{\varphi\in H^1_0,\, \int_0^1 e^{2sm}\varphi^2dx=1}\int_0^1e^{2sm}[(\varphi')^2+c\varphi^2]dx\\
 &=&
 \displaystyle
 \min_{w\in H^1_0,\, \int_0^1 w^2dx=1}\int_0^1[(w'-swm')^2+cw^2]dx,
 \end{array}
 \right.
 \nonumber
 \ees
and when $\hbar_1=0,\,\hbar_2,\,\ell_2>0$, we have
 \bes
 \left.\begin{array}{lll}
 \medskip
 \displaystyle
 \lambda_1(s)
 &=&
 \displaystyle
 \min_{\varphi\in H^1_*,\, \int_0^1 e^{2sm}\varphi^2dx=1}\int_0^1e^{2sm}[(\varphi')^2+c\varphi^2]dx++{{\ell_2}\over{\hbar_2}}\varphi^2(1)\\
 &=&
 \displaystyle
 \min_{w\in H^1_*,\, \int_0^1 w^2dx=1}\int_0^1[(w'-swm')^2+cw^2]dx+{{\ell_2}\over{\hbar_2}}w^2(1),
 \end{array}
 \right.
 \nonumber
 \ees
where $H^1_*=\{g\in H^1(0,1):\, g(0)=0\}$.  By such variational characterizations,
we can use the argument similar to the above to obtain the desired results.
The remaining cases can be handled similarly, and the detailed are omitted. This completes the proof of Theorem \ref{th1.2}.
\ \ \ \ \  $\hfill \Box$

\vskip6pt At last, we give the

{\it Proof of Theorem \ref{th1.2a}}\ \ Recall that the principal eigenvalue $\lambda_1^\mathcal{P}(s)$
of \eqref{1.p1} can be variationally characterized as
 \bes
 \left.\begin{array}{lll}
 \medskip
 \displaystyle
 \lambda_1^\mathcal{P}(s)
 &=&
 \displaystyle
 \min_{\varphi\,is 1-periodic,\, \int_0^1 e^{2sm}\varphi^2dx=1}\int_0^1e^{2sm}[(\varphi')^2+c\varphi^2]dx\\
 &=&
 \displaystyle
 \min_{w\,is 1-periodic,\, \int_0^1 w^2dx=1}\int_0^1[(w'-swm')^2+cw^2]dx.
 \end{array}
 \right.
 \nonumber
 \ees

 As before, making use of the transformation $w=e^{sm}\varphi$, we obtain from \eqref{1.p1} that
 \bes
 \left\{\begin{array}{ll}
 \medskip
 \displaystyle
 -w''(x)+[s^2(m'(x))^2+sm''(x)+c(x)]w(x)=\lambda_1^\mathcal{P}(s)w(x),\ &x\in\bR,\\
 \displaystyle
 w(x)=w(x+1),\ &x\in\bR.
 \end{array}
 \right.
 \nonumber
 \ees

Consequently, Theorem \ref{th1.2a} follows from the same analysis as that of Theorem \ref{th1.2}.
\ \ \ \ \  $\hfill \Box$

\section{The principal eigenfunction:\ Proof of Theorem \ref{th1.3}}

This section is devoted to the study of the asymptotic behavior of the principal eigenfunction of \eqref{1.3} as $s\to\infty$.
It is easily seen from the analysis in the previous section that the assertions (2)-(9) of Theorem \ref{th1.3} hold.
Hence, it remains to prove the assertion (1) of Theorem \ref{th1.3}. To the end, we use the techniques introduced in \cite{CL1}
with necessary modifications.

We first consider the case $x_0\in(0,1)$.} By our assumption, there exist small positive constants $a$ and $R$, and a large positive constant $A$, such that
 \bes
 |m'(x)|^2\geq a|x-x_0|^{2(k^*-1)},\ \ \forall x\in B(x_0,4R)\subset(0,1),
 \label{3.1}
 \ees
and
 \bes
 |m''(x)|\leq A|x-x_0|^{k^*-2},\ \ \forall x\in B(x_0,4R)\subset(0,1).
 \label{3.1a}
 \ees

As in \cite{CL1}, through scaling, without loss of generality we may assume that
 \bes
 \|m'(x)\|_{L^\infty(0,1)}+\|m''(x)\|_{L^\infty(0,1)}+\max_{[0,1]}c(x)-\min_{[0,1]}c(x)\leq {1\over2},
 \label{3.1b}
 \ees
and define for convenience
 $$
 q(s,x)=s^2|m'(x)|^2+sm''(x)+c(x)-\lambda_1^\mathcal{N}(s).
 $$
Then, we have $w''(s,x)=q(s,x)w(s,x)$.

Lemma 5.1 and Lemma 5.2 of \cite{CL1} remain true in our current situation, that is, we have

\begin{lem}\label{l3.1} There exists a constant $M>0$ such that $\|w(s,\cdot)\|_{L^\infty(0,1)}\leq Ms^{1/2},\, \forall s\geq1$.
\end{lem}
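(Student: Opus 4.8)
The plan is to estimate $w(s,\cdot)$ at a point $x_s\in[0,1]$ where it attains its maximum $P_s:=\max_{[0,1]}w(s,\cdot)$, and to insert this into the normalization $\int_0^1 w^2(s,x)\,dx=1$. Recall that $w(s,\cdot)$ solves $w''=q(s,\cdot)w$ on $(0,1)$ with $q(s,x)=s^2|m'(x)|^2+sm''(x)+c(x)-\lambda_1^{\mathcal N}(s)$, and satisfies $w'(0)=sm'(0)w(0)$, $w'(1)=sm'(1)w(1)$ by \eqref{1.3}. Since $s^2|m'(x)|^2\ge0$, $|sm''(x)|\le s\|m''\|_{L^\infty}$, and $|c(x)-\lambda_1^{\mathcal N}(s)|\le\max_{[0,1]}c-\min_{[0,1]}c$ (because $\min_{[0,1]}c\le\lambda_1^{\mathcal N}(s)\le\max_{[0,1]}c$), we get $q(s,x)\ge -Cs$ on $(0,1)$ for every $s\ge1$, where $C$ depends only on $\|m''\|_{L^\infty}$ and $\max_{[0,1]}c-\min_{[0,1]}c$ (indeed $C\le1$ under the normalization \eqref{3.1b}). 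Hence $w''=q(s,\cdot)w\ge -Cs\,w\ge -Cs\,P_s$ throughout $(0,1)$; that is, $w(s,\cdot)$ is almost convex at scale $s^{-1/2}$. I will use this one-sided bound together with the known value of $w'(x_s)$: it equals $0$ if $x_s\in(0,1)$, and equals $sm'(x_s)P_s$ if $x_s\in\{0,1\}$.

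If $x_s\in(0,1)$, then $w'(x_s)=0$, and integrating $w''\ge -Cs\,P_s$ twice gives $w(s,x)\ge P_s\big(1-\frac{Cs}{2}(x-x_s)^2\big)$, so $w(s,x)\ge P_s/2$ on $I_s:=[x_s-(Cs)^{-1/2},x_s+(Cs)^{-1/2}]\cap[0,1]$. For $s$ large, $|I_s|\ge (Cs)^{-1/2}$, so the normalization forces $1\ge(P_s/2)^2(Cs)^{-1/2}$, i.e. $P_s\le 2C^{1/4}s^{1/4}$. If $x_s=0$, then $m'(0)\le0$ is forced, since $m'(0)>0$ would give $w'(0)>0$, contradicting that $0$ is the maximum point; when $m'(0)=0$ the preceding estimate applies verbatim, and when $m'(0)<0$ the same integration gives $w(s,x)\ge P_s\big(1+sm'(0)x-\frac{Cs}{2}x^2\big)$, which remains $\ge P_s/4$ on an interval $[0,c_0/s]$ for a suitable $c_0=c_0(m'(0),C)>0$; then $1\ge(P_s/4)^2(c_0/s)$, whence $P_s\le M_0\,s^{1/2}$. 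The case $x_s=1$ is symmetric (now $m'(1)\ge0$ is forced). Taking $M$ to be the largest of the finitely many constants so produced and absorbing the compact range $1\le s\le s_0$ into $M$ via continuity of $s\mapsto\|w(s,\cdot)\|_{L^\infty(0,1)}$, we conclude $\|w(s,\cdot)\|_{L^\infty(0,1)}\le M s^{1/2}$ for all $s\ge1$.

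The only genuinely delicate point is the boundary case $x_s\in\{0,1\}$ with $m'(x_s)\ne0$: there the boundary flux $w'(x_s)=sm'(x_s)P_s$ is itself of order $sP_s$, and it is the competition between this first-order term and the quadratic term $\frac{Cs}{2}x^2$ that confines $w$ to size $\sim P_s$ only on an interval of length $\sim s^{-1}$ — rather than $\sim s^{-1/2}$, as in the interior case — which is precisely what produces (and makes sharp) the exponent $1/2$. All the remaining steps are routine one-dimensional ODE estimates, as in the proof of Lemma 5.1 of \cite{CL1}.
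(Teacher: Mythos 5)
Your proof is correct, and it is a useful self-contained argument: the paper itself gives no proof for Lemma~\ref{l3.1}, merely asserting that Lemma~5.1 of \cite{CL1} ``remains true in our current situation.'' Your approach is the natural one-dimensional version of the standard argument: obtain the one-sided bound $q(s,\cdot)\geq -Cs$, so that $w''\geq -Cs\,P_s$; then, starting from the known derivative at the maximum point $x_s$ (zero at an interior max, or $sm'(x_s)P_s$ at a boundary max by the Robin-type conditions in \eqref{1.3}), integrate twice to confine $w\geq P_s/4$ on an interval $I_s$ of definite length; and finally invoke the $L^2$ normalization to bound $P_s^2\,|I_s|\leq1$. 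All the individual steps check out: the sign analysis correctly rules out $m'(0)>0$ (resp.\ $m'(1)<0$) at a boundary maximum, the choice of $c_0=c_0(m'(x_s),C)$ in the boundary sub-case is valid for $s\geq1$, and the final passage from ``$s$ large'' to ``$s\geq1$'' is legitimate since $s\mapsto\|w(s,\cdot)\|_{L^\infty}$ is continuous by standard perturbation theory for the (simple) principal eigenvalue. Your observation that the interior case already yields the sharper bound $P_s\lesssim s^{1/4}$, while the exponent $1/2$ is produced only by a boundary maximum with $m'(x_s)\neq0$ (which shortens the confinement interval to $\sim s^{-1}$), is a nice clarification of why the stated rate is what it is. In the multidimensional setting of \cite{CL1} the analogous argument would require a PDE substitute for the twice-integration step (Harnack or mean-value inequalities), whereas your version exploits the ODE structure directly, which is both more elementary and more transparent in the present one-dimensional context.
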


\begin{lem}\label{l3.2} Let $k,\,r$ be positive constants and $W$ be a $C^2$ function satisfying
 $$
 \Delta W(x)=Q(x)W(x)>0,\ \ \ Q(x)\geq2k^2,\ \ \forall x\in(-r,r).
 $$
Then, $W(0)\leq e^{1-kr}\max\{W(-r),\ W(r)\}$.
\end{lem}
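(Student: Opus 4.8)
The plan is to compare $W$ with an explicit barrier built from hyperbolic cosine and use the maximum principle (or, in one space dimension, a direct ODE comparison). Since the statement is phrased in the one‑dimensional setting of problem \eqref{1.3} — we may read $\Delta W = W''$ on the interval $(-r,r)$ — the cleanest route is the following. First I would introduce the comparison function
\[
\overline W(x) = \big(\max\{W(-r),\,W(r)\}\big)\,\frac{\cosh(\sqrt{2}\,k\,x)}{\cosh(\sqrt{2}\,k\,r)},
\]
which satisfies $\overline W'' = 2k^2\,\overline W$ on $(-r,r)$ and $\overline W(\pm r)=\max\{W(-r),W(r)\}\ge W(\pm r)$. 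Set $v = W - \overline W$. Then on $(-r,r)$ we have $v(\pm r)\le 0$, and
\[
v'' = Q(x)W - 2k^2\overline W \ge 2k^2 W - 2k^2 \overline W = 2k^2 v
\]
\emph{at points where $v\ge 0$} (here one uses $Q\ge 2k^2$ together with $W>0$, which holds because $QW>0$ and $Q\ge 2k^2>0$ force $W>0$). The standard maximum principle argument — if $v$ had a positive interior maximum at $x_*$ then $v''(x_*)\le 0$ while $2k^2 v(x_*)>0$, a contradiction — yields $v\le 0$ throughout, hence $W(x)\le \overline W(x)$ on $[-r,r]$; in particular
\[
W(0)\le \frac{\max\{W(-r),W(r)\}}{\cosh(\sqrt 2\,k\,r)}.
\]

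It then remains to pass from the bound $1/\cosh(\sqrt2\,kr)$ to the stated $e^{1-kr}$. Since $\cosh t \ge \tfrac12 e^{t}$ for all $t\ge 0$, we get $1/\cosh(\sqrt2\,kr)\le 2e^{-\sqrt 2\,kr}\le 2e^{-kr}\le e^{1-kr}$ (using $\sqrt2>1$ and $2\le e$), which is exactly the claimed inequality. If one prefers to keep the constant $2k^2$ visible and avoid the factor $\sqrt2$, one can instead take the barrier with $\cosh(kx)$ replaced so that $\overline W'' = 2k^2\overline W$ is replaced by $\overline W''= k^2\overline W$ and use only $Q\ge k^2$ where needed, or simply absorb the discrepancy into the generous constant $e^{1-kr}$; either way the bookkeeping is routine.

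The only mild subtlety — and the one place to be careful — is justifying $W>0$ on the whole interval so that the inequality $Q W\ge 2k^2 W$ is valid with the correct sign at the putative maximum of $v$; this is where the hypothesis $Q(x)W(x)>0$ (not merely $\ge 0$) enters, together with $Q\ge 2k^2>0$, forcing $W>0$ pointwise. Once that is in hand, the argument is a textbook comparison and presents no real obstacle. For completeness, the same proof works verbatim in higher dimensions with $\overline W(x)=\max_{|y|=r}W(y)\cdot \cosh(\sqrt2\,k|x|)/\cosh(\sqrt2\,kr)$ after checking $\Delta \overline W \le 2k^2\overline W$, which is why Lemma \ref{l3.2} can be quoted in the form stated in \cite{CL1}.
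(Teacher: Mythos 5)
Your one--dimensional proof via the $\cosh$ barrier is correct. The paper itself does not prove Lemma \ref{l3.2} but simply cites Lemma 5.2 of \cite{CL1}, so there is no in-paper argument to compare against line by line; however, a comparison argument against an explicit supersolution with exponential decay is precisely the standard route, and your version is clean and complete. In particular, the reduction from $1/\cosh(\sqrt 2\,kr)$ to the stated $e^{1-kr}$ via $\cosh t\ge \tfrac12 e^t$, $\sqrt 2>1$, and $2\le e$ is fine, and you correctly observe that the hypothesis $Q(x)W(x)>0$ together with $Q\ge 2k^2>0$ forces $W>0$ pointwise, which is what legitimizes $Q(x_*)W(x_*)\ge 2k^2W(x_*)$ at the putative positive interior maximum $x_*$ of $v=W-\overline W$.

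One remark: your closing parenthetical about the higher-dimensional case is incorrect as stated. For $\overline W(x)=M\cosh(\sqrt 2\,k|x|)/\cosh(\sqrt 2\,kr)$ in $\mathbb{R}^n$ with $n\ge 2$, writing $\rho=|x|$ one finds
$$
\Delta\overline W \;=\; 2k^2\overline W \;+\; \frac{n-1}{\rho}\,\sqrt 2\,k\,M\,\frac{\sinh(\sqrt 2\,k\rho)}{\cosh(\sqrt 2\,kr)}\;\ge\;2k^2\overline W,
$$
so the inequality $\Delta\overline W\le 2k^2\overline W$ that the comparison requires fails (the extra first-order term has the wrong sign). A genuine $n$-dimensional proof needs a different radial supersolution (e.g.\ built from a modified Bessel function). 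This does not affect your result: the lemma as used in the paper is one-dimensional, where $\Delta W=W''$ and your barrier is an exact solution of $\overline W''=2k^2\overline W$, so the argument goes through without change.
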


By slightly modifying the argument of Lemma 5.3 of \cite{CL1}, one can use Lemma \ref{l3.2} to deduce

\begin{lem}\label{l3.3} Let $a$ and $R$ be as in \eqref{3.1}. Then for any $s\geq1$, there holds
 \bes
 w(s,x)\leq e^{1-s^{1\over2}(|x-x_0|-({3\over{as}})^{1\over{2k^*-2}})}\max_{B(x_0,2|x-x_0|)}w(s,y),\ \ \forall x\in B(x_0,2R).
 \label{3.2}
 \ees
\end{lem}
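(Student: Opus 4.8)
The plan is to reduce the statement to a single application of Lemma \ref{l3.2} on a short interval centred at $x$, chosen to stay away from the degeneracy point $x_0$ while still being wide enough to produce the exponential gain. Throughout, write $d:=|x-x_0|$ and set $\rho=\rho(s):=\bigl(3/(as)\bigr)^{1/(2k^*-2)}$, so that $s^2\bigl(a\rho^{2(k^*-1)}\bigr)=3s$.

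First I would dispose of the range $d\le\rho$: there $1-s^{1/2}(d-\rho)\ge 1$, so $e^{1-s^{1/2}(d-\rho)}\ge e>1$, while $x\in B(x_0,2d)$ forces $w(s,x)\le\max_{B(x_0,2d)}w(s,\cdot)$, and \eqref{3.2} is immediate. So assume $d>\rho$ and put $r:=d-\rho\in(0,d)$. Since $x\in B(x_0,2R)$ we have $d<2R$, hence for every $y\in[x-r,x+r]$ one gets $|y-x_0|\le d+r<2d<4R$, so $[x-r,x+r]\subset B(x_0,4R)\subset(0,1)$ and \eqref{3.1} applies on it; moreover, because $r<d$, we also get the crucial lower bound $|y-x_0|\ge d-r=\rho$ there, whence $|m'(y)|^2\ge a\rho^{2(k^*-1)}=3/s$ on $[x-r,x+r]$.

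Next I would bound the potential from below. Using the normalization \eqref{3.1b} (which gives $\|m''\|_{L^\infty(0,1)}\le\tfrac12$ and $\max_{[0,1]}c-\min_{[0,1]}c\le\tfrac12$) together with $\min_{[0,1]}c\le\lambda_1^{\mathcal N}(s)\le\max_{[0,1]}c$, for $s\ge 1$ one finds
\[
q(s,y)=s^2|m'(y)|^2+sm''(y)+c(y)-\lambda_1^{\mathcal N}(s)\ \ge\ 3s-\tfrac{s}{2}-\tfrac12\ \ge\ 2s,\qquad y\in[x-r,x+r].
\]
Then I would apply Lemma \ref{l3.2} with $W(t):=w(s,x+t)$, $Q(t):=q(s,x+t)$ and $k:=s^{1/2}$ on $(-r,r)$ — legitimate since $w(s,\cdot)>0$, $W''=QW$, and $Q\ge 2s=2k^2$ — to conclude
\[
w(s,x)=W(0)\ \le\ e^{1-s^{1/2}r}\max\{w(s,x-r),\,w(s,x+r)\}\ =\ e^{1-s^{1/2}(d-\rho)}\max\{w(s,x-r),\,w(s,x+r)\}.
\]
Finally $|x\pm r-x_0|\le d+r<2d$, so $x\pm r\in B(x_0,2|x-x_0|)$ and the right-hand maximum is at most $\max_{B(x_0,2|x-x_0|)}w(s,\cdot)$, which is precisely \eqref{3.2}.

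Once Lemma \ref{l3.2} is granted the argument is essentially bookkeeping; the only delicate point is the choice of radius $r=d-\rho$, which must simultaneously keep $[x-r,x+r]$ inside $B(x_0,4R)$ (so that \eqref{3.1} is available), keep it bounded away from $x_0$ by exactly the amount $\rho$ (so that $s^2|m'|^2\ge 3s$, hence $q\ge 2s=2k^2$, on it), and keep the endpoints $x\pm r$ inside $B(x_0,2d)$ (so that Lemma \ref{l3.2} delivers the stated right-hand side). Verifying these three inclusions and the clean inequality $3s-s/2-1/2\ge 2s$ for $s\ge1$ is all that is required.
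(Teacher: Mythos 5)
Your proof is correct and takes the same approach the paper intends: the paper does not spell out a proof but simply says Lemma \ref{l3.3} follows from Lemma \ref{l3.2} by modifying the argument of Lemma 5.3 of \cite{CL1}, and your argument is exactly that modification, with the interval radius $r=|x-x_0|-(3/(as))^{1/(2k^*-2)}$ calibrated so that $s^2|m'|^2\ge 3s$ on the interval and hence $q\ge 2s=2k^2$ with $k=s^{1/2}$, which is what Lemma \ref{l3.2} needs. Your write-up is in fact more complete than the paper's, which leaves the verification of the inclusions and the coercivity bound to the cited reference.
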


\vskip10pt With the help of the above lemmas, we are now ready to give

{\it Proof of Theorem \ref{th1.3}\,(1-i)}\ \ Clearly, under our assumption on $m$ at $x=x_0$, one can find positive constants $a$
and $R$ such that \eqref{3.1} holds. Thus, $w$ satisfies \eqref{3.2}.

For any fixed $s\geq1$, we denote
 $$
 W(x_0,s;y)=s^{-{1\over{2k^*}}}w(s,x_0+s^{-{1\over{k^*}}}y),
 $$
and
 $$
 Q(x_0,s;y)=s^{-{2\over{k^*}}}\Big\{s^2\Big|m'(x_0+s^{-{1\over{k^*}}}y)\Big|^2+sm''(x_0+s^{-{1\over{k^*}}}y)+c(x_0+s^{-{1\over{k^*}}}y)-\lambda_1^\mathcal{N}(s)\Big\}.
 $$
Then, due to \eqref{3.1}, \eqref{3.1a} and \eqref{3.1b}, simple computation gives
 $$
 \Delta_yW=Q(x_0,s;y)W,\ \  \ \forall y\in B(0,s^{1\over{k^*}}R),
 $$
and
 $$
 ay^{2(k^*-1)}-Ay^{k^*-2}-1\leq Q(x_0,s;y)\leq M_2^2y^{2(k^*-1)}+M_2y^{k^*-2}+1,\ \  \ \forall y\in B(0,s^{1\over{k^*}}R),
 $$
with $M_2=\max_{[0,1]}|m^{(k^*)}(x)|$. In addition, combining Lemma \ref{l3.1} and Lemma \ref{l3.3}, we have
 \bes
 \left.\begin{array}{lll}
 \medskip
 \displaystyle
 W(x_0,s;y)
 &=&
 \displaystyle
 s^{{1\over{2k^*}}}w(s,x_0+s^{-{1\over{k^*}}}y)\\
 &\leq&
 \displaystyle
 Ms^{{{k^*-1}\over{2k^*}}}e^{1-s^{{{k^*-2}\over{2k^*}}}|y|+({3\over a})^{{{1}\over{2(k^*-1)}}}s^{{{k^*-2}\over{2(k^*-1)}}}},\ \ \forall y\in B(0,2s^{1\over{k^*}}R).
 \end{array}
 \right.
 \label{3.3}
 \ees

Let $y_0$ be the maximum point of $W(x_0,s;y)$ on $\overline B(0,2s^{1\over{k^*}}R)$, that is, $y_0$ satisfies
 $$
 W(x_0,s;y_0)=\overline M(x_0,s,R)=\max_{y\in\overline B(0,2s^{1\over{k^*}}R)}W(x_0,s;y).
 $$
From Lemma \ref{l3.3}, it then follows that
 \bes
 W(x_0,s;y)\leq\overline M(x_0,s,R)e^{1-s^{{{k^*-2}\over{2k^*}}}|y|+({3\over{a}})^{1\over{2k^*-2}})s^{{k^*-2}\over{2k^*-2}}},\ \ \forall y\in B(0,s^{1\over{k^*}}R).
 \label{3.3a}
 \ees

We are going to show that $\overline M(x_0,s,R)$ is bounded, independent of all large $s$. For such purpose, we need consider two different cases as follows.

{\it Case 1}.\ \ $y_0\in\partial B(0,2s^{1\over{k^*}}R)$. Thanks to \eqref{3.3} and the fact of ${1\over2}>{{{k^*-2}\over{2(k^*-1)}}}$, one yields
 $$
 \overline M(x_0,s,R)\leq Ms^{{{k^*-1}\over{2k^*}}}e^{1-2Rs^{1\over2}+({3\over a})^{{{1}\over{2(k^*-1)}}}s^{{{k^*-2}\over{2(k^*-1)}}}}\leq M_0
 $$
for some positive constant $M_0$, which is independent of all large $s$ and may vary from place to place below.

{\it Case 2}.\ \  $y_0\in B(0,2s^{1\over{k^*}}R)$. Clearly $\Delta_yW(x_0,s;y_0)\leq0$, and so $Q(x_0,s;y_0)\leq0$. Thus,
 $$
 ay_0^{2(k^*-1)}-Ay_0^{k^*-2}-1\leq0.
 $$
This implies $|y_0|\leq y^*$ for some constant $y^*>0$, independent of all large $s$. Furthermore, for all sufficiently large $s$ and any $y\in B(0,y^*)$,
it is easy to check that the dominant term of $Q$ is given by
 \bes
 \left.\begin{array}{lll}
 \medskip
 \displaystyle
 Q(x_0,s;y)
 &\approx&
 \displaystyle
 s^{-{2\over{k^*}}}\Big\{s^2\Big|m'(x_0+s^{-{1\over{k^*}}}y)\Big|^2+sm''(x_0+s^{-{1\over{k^*}}}y)\Big\}\\
 \medskip
 &\approx&
 \displaystyle
 s^{-{2\over{k^*}}}\cdot s^2(m^{(k^*)}(x_0))^2\cdot y^{2(k^*-1)}s^{-{{2(k^*-1)}\over{k^*}}}
 +s^{-{2\over{k^*}}}\cdot s\cdot m^{(k^*)}(x_0)\cdot y^{k^*-2}s^{-{{k^*-2}\over{k^*}}}\\
 &=&
 \displaystyle
 (m^{(k^*)}(x_0))^2y^{2(k^*-1)}+m^{(k^*)}(x_0)y^{k^*-2}.
 \end{array}
 \right.
 \label{3.4}
 \ees
This shows that $Q(x_0,s;y)$ is bounded, uniformly in all large $s$.

On the other hand, a direct application of the elliptic Harnack inequality (see, for instance, \cite{GT})
to the equation satisfied by $W$ concludes that
 $$
 \max_{y\in\overline B(0,y^*)}W(x_0,s;y)\leq C_0\min_{y\in\overline B(0,y^*)}W(x_0,s;y)
 $$
for some positive constant $C_0=C_0(a,A,M_2)$.  As a result, for all large $s$, we get
 $$
 {{\overline M^2(x_0,s,R)}\over{C_0^2}}\leq\int_{B(0,y^*)}W^2dy\leq\int_{B(0,s^{1\over{k^*}}R)}W^2dy
 =\int_{B(x_0,R)}w^2dy\leq1,
 $$
which implies that $\overline M(x_0,s,R)$ is bounded, uniformly in all large $s$.

In summary, the above analysis shows that $\overline M(x_0,s,R)$ is bounded, uniformly in all large $s$. With loss of generality, we now assume that
$\mu(B(x_0,R))>0$. Then, there is a sequence $\{s_j\}$
and a positive function $W^*$ such that
 $$
 \lim_{j\to\infty}W(x_0,s_j;y)=W^*(y)\ \ \mbox{locally uniformly in}\ \bR,
 $$
and
 \bes
 \left.\begin{array}{lll}
 \medskip
 \displaystyle
 \int_{-\infty}^\infty (W^*)^2dy&=&
 \medskip
 \displaystyle
 \lim_{j\to\infty}\int_{B(0,(s_j)^{1\over{k^*}}R)}W^2(x_0,s_j;y)dy=\lim_{j\to\infty}\int_{B(x_0,R)} w^2(s_j,x)dx\\
 &&\ \ =
 \displaystyle
 \mu(B(x_0,R))=\mu(\{x_0\})>0.
 \end{array}
 \right.
 \nonumber
 \ees
In addition, from the equation satisfied by $W$, $W^*$ solves the following ODE equation
 \bes
 (W^*)''(y)=((m^{(k^*)}(x_0))^2y^{2(k^*-1)}+m^{(k^*)}(x_0)y^{k^*-2})W^*(y),\ \ \ y\in\bR.
 \nonumber
 \ees
This ends the proof of Theorem \ref{th1.3}(1-i). \ \ \ \ \  $\hfill \Box$

\vskip10pt {\it Proof of Theorem \ref{th1.3}\,(1-ii)}\ \ Assume that $x_0=0$ or $x_0=1$. The proof is similar to the case of $x_0\in(0,1)$, and so we only sketch it.
Denote
 $$
 W(x_0,s;y)=(\mu(s,R))^{-{1\over2}}s^{-{1\over{2k^*}}}w(s,x_0+s^{-{1\over{k^*}}}y),
 $$
and
 $$
 Q(x_0,s;y)=s^{-{2\over{k^*}}}\Big\{s^2\Big|m'(x_0+s^{-{1\over{k^*}}}y)\Big|^2+sm''(x_0+s^{-{1\over{k^*}}}y)+c(x_0+s^{-{1\over{k^*}}}y)-\lambda_1^\mathcal{N}(s)\Big\},
 $$
where
 $$
 \mu(s,R)=\int_{B(x_0,R)\cap(0,1)}w^2(s,x)dx,
 $$
and $R>0$ is chosen such that $\mu(B(x_0,R)\cap(0,1))=\mu(\{x_0\})>0$. Note that $\lim\limits_{s\to\infty}\mu(s,R)=\mu(\{x_0\})$.
As in the case of $x_0\in(0,1)$, we have
 $$
 \Delta_yW=Q(x_0,s;y)W,\ \  \ \forall y\in B(0,s^{1\over{k^*}}R)\cap\Omega_s,
 $$
and
 $$
 ay^{2(k^*-1)}-Ay^{k^*-2}-1\leq Q(x_0,s;y)\leq M_2^2y^{2(k^*-1)}+M_2y^{k^*-2}+1,\ \  \ \forall y\in B(0,s^{1\over{k^*}}R)\cap\Omega_s,
 $$
for some positive constants $a$ and $A$, where $\Omega_s=\{y:\ x_0+s^{1\over{k^*}}R\in(0,1)\}$ and $M_2=\max_{[0,1]}|m^{(k^*)}(x)|$.

Thus, the argument of Theorem 2 (ii) of \cite{CL1} can be adapted to conclude that there is a sequence $\{s_j\}$
and a positive function $W_*$ such that
 $$
 \lim_{j\to\infty}W(x_0,s_j;y)=W_*(y)\ \ \mbox{locally uniformly in}\ \bR_*,
 $$
where $\bR_*=(0,\infty)$ if $x_0=0$ and $\bR_*=(-\infty,0)$ if $x_0=1$, and $W_*$ satisfying
$\int_{\bR_*}(W_*)^2dy=1$ solves the following linear ODE problem
 \bes
 (W_*)''(y)=((m^{(k^*)}(x_0))^2y^{2(k^*-1)}+m^{(k^*)}(x_0)y^{k^*-2})W_*(y),\ \ \ y\in\bR_*.
 \nonumber
 \ees
\ \ \ \ \  $\hfill \Box$

\end {document}